\documentclass[12pt]{amsart}

\usepackage{mystyle}

\begin{document}

\title{
From the free boundary condition for Hele-Shaw to a fractional parabolic equation
}

\author[H. A. Chang-Lara]{H\'ector A. Chang-Lara}
\address{Department of Mathematics, Columbia University, New York, NY 10027}
\email{changlara@math.columbia.edu}

\author[N. Guillen]{Nestor Guillen}
\address{Department of Mathematics, University of Massachusetts Amherst, Amherst, MA 01003}
\email{nguillen@math.umass.edu}

\newcommand{\BlueComment}[1]{\color{blue}{#1}\color{black}}
\newcommand{\RedComment}[1]{\color{red}{#1}\color{black}}

\begin{abstract}
We propose a method to determine the smoothness of sufficiently flat solutions of one phase Hele-Shaw problems. The novelty is the observation that under a flatness assumption the free boundary --represented by the hodograph transform of the solution- solves a nonlinear integro-differential equation. This nonlinear equation is linearized to a (nonlocal) parabolic equation with bounded measurable coefficients, for which regularity estimates are available.  This fact is used to prove a regularity result for the free boundary of a weak solution near points where the solution looks sufficiently flat. More concretely, flat means that in a parabolic neighborhood of the point the solution lies between the solutions corresponding to two parallel flat fronts a small distance apart --a condition that only depends on the the local behavior of the solution. In a neighborhood of such a point, the free boundary is given by the graph of a function whose spatial gradient enjoys a universal H\"older estimate in both space and time.
\end{abstract}

\subjclass{35R35,35B65,35R09}
\keywords{Hele-Shaw, free boundary problems, $C^{1,\alpha}$ estimates, nonlocal equations}

\maketitle

\markboth{H. Chang-Lara and N. Guillen}{From Hele-Shaw to a Fractional Parabolic Equation}


\section{Introduction}\label{sec:intro}

The Hele-Shaw model can be used to describe an incompressible flow lying between two nearby horizontal plates \cite{saffman1958penetration}. By renormalizing constants and assuming negligible effects from the vertical components of the velocity and surface tension, it can be understood in terms of a pressure function $u:\W\times(-1,0]\to[0,\8)$ that satisfies,
\begin{alignat}{3}
\label{eq:harmonic} \D u &= 0 \quad &&\text{ in } \quad &&\W_u^+:=\{u>0\},\\
\label{eq:free_boundary} \frac{\p_t u}{|Du|} &= |Du| \quad &&\text{ on } \quad  &&\G_u:=\p \supp u\cap \W.
\end{alignat}
The first equation expresses the incompressibility of the fluid which occupies the domain $\W_u^+$ which is spreading over time. By implicit differentiation, $\p_t u/|Du|$ at the \emph{free boundary} $\G_u$ corresponds to the (outer) normal speed of the interphase between the are occupied by fluid $\W_u^+$ and the empty region. The second relation then indicates that this interphase advances with the speed of the fluid.

The aim of this paper is to illustrate the relationship between the free boundary in the Hele-Shaw problem and solutions to parabolic integro-differential equations, and to show how this can be exploited to analyze the free boundary. Heuristically speaking we observe that, if the free boundary is described by the graph of a function $\bar u:\mathbb{R}^{n-1}\times I \to\mathbb{R}$, and if the solution itself is $\varepsilon$-flat for some $\varepsilon>0$ (i.e. close to a planar front), then $\bar u$ solves the equation
\begin{align*} 
  & \partial_t \bar u = a\left ( \frac{L\bar u}{1-\varepsilon L\bar u} + \varepsilon \frac{|D\bar u|^2}{1-\varepsilon L\bar u} \right ),
\end{align*}
where $a\in [\lambda,\Lambda]$, and $L\bar u$ has the form $-(-\Delta)^{1/2}\bar u + \textnormal{\emph{error}}$, where the \emph{error} term is some quantity going to zero as $\varepsilon \to 0$. This suggests that blow ups of the free boundary (at least near flat points) are governed by the fractional heat equation
\begin{align}
  \partial_t \bar u +a(-\Delta)^{1/2}\bar u = 0. \label{eqn:intro frac heat equation}
\end{align}
Given that global solutions to \eqref{eqn:intro frac heat equation} are differentiable in space and time, one is tempted to seek a proof of H\"older continuous differentiability of the free boundary through compactness and blow up arguments, as has been done in multiple contexts such as the theory of phase transitions, degenerate elliptic PDE, and free boundary problems.

The main result is bellow. See the discussion at the beginning of Section \ref{sec:prelim} for a complete review of our notation. Moreover, the notion of viscosity solution, along with the existence and uniqueness theory developed by Kim \cite{kim2003uniqueness}, is reviewed in Section \ref{sec:viscosity_sol}. 
\begin{theorem}\label{thm: main}
Let $u:B_1\times(-1,0]\to[0,\8)$ be a viscosity solution of,
\begin{alignat*}{3}
\D u &= 0 \quad &&\text{ in } \quad &&\W_u^+ = \{u>0\},\\
\frac{\p_t u}{|Du|} &= |Du| \quad &&\text{ on } \quad  &&\G_u = \p\supp u\cap B_1.
\end{alignat*}
There exists universal constants $\e_0,\a\in(0,1)$ and $C>0$ such that if for some $\e\in(0,\e_0)$,
\begin{align}
(x_n+t-\e)_+ \leq u(x,t) \leq (x_n+t+\e)_+, \label{eqn: main thm flatness}
\end{align}
then for every $t\in(-1/2,0]$, the free boundary can be parametrized as a $C^{1,\a}$ graph in the $e_n$ direction,
\[
\G_u(t) = \{(x',x_n)\in B_{1/2}: x_n=-t-\e\bar u(x',t)\},
\]
with the estimate,
\begin{align*}
\|D_{\R^{n-1}}\bar u\|_{C^\a\1B_{1/2}^{n-1}\times(-1/2,0]\2} \leq C.
\end{align*}
\end{theorem}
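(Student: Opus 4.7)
The plan is to convert the question into a regularity problem for a nonlocal parabolic equation and then invoke available H\"older estimates. First, using the flatness \eqref{eqn: main thm flatness} together with the comparison principle for viscosity solutions, I would show that $\Gamma_u(t)$ is trapped in a thin slab about the hyperplane $\{x_n + t = 0\}$ for every $t \in (-1/2, 0]$. Monotonicity in the space-time direction $e_n + \partial_t$ inherited from the flatness assumption then implies that $\Gamma_u(t)$ is a Lipschitz graph which, after rescaling by $\varepsilon$, takes the form $x_n = -t - \varepsilon \bar u(x',t)$ with $\bar u$ valued in $[-1,1]$.

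Second, I would derive the nonlinear nonlocal equation for $\bar u$ sketched in the introduction. Since $u$ is harmonic in a domain whose boundary is a nearly flat graph, the Dirichlet-to-Neumann map for $\Omega_u^+$ is a perturbation of that of the half-space, which equals $(-\Delta)^{1/2}$; this explains the appearance of $L = -(-\Delta)^{1/2} + \textnormal{error}$. Expressing $|Du|$ and $\partial_t u$ at a free boundary point in terms of $\bar u$, $L\bar u$, and $D\bar u$, and substituting into \eqref{eq:free_boundary}, yields
\[
\partial_t \bar u = a\!\left(\frac{L\bar u}{1-\varepsilon L\bar u} + \varepsilon\, \frac{|D\bar u|^2}{1-\varepsilon L\bar u}\right),
\]
for some $a \in [\lambda, \Lambda]$. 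Since $u$ is only a viscosity solution, this equation has to be interpreted in the viscosity sense, which requires lifting each smooth test function touching $\bar u$ to a test function touching $u$ at the corresponding free boundary point.

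Third, I would extract regularity. When $\varepsilon$ is small, the multiplier $(1-\varepsilon L\bar u)^{-1}$ and the coefficient $a$ are bounded above and below by positive constants, so upon freezing them and working with spatial increments of $\bar u$, those increments satisfy a linear nonlocal parabolic equation of order $1$ with bounded measurable coefficients. The H\"older theory for such equations (in the spirit of Silvestre and Chang-Lara--D\'avila) then yields a $C^\alpha$ estimate for $\bar u$. An iteration on dyadic parabolic scales---at each scale using the H\"older estimate to improve flatness and then rescaling to restore the original flatness regime---upgrades this to the desired $C^\alpha$ estimate on $D_{\mathbb{R}^{n-1}}\bar u$.

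The principal obstacle is controlling the error term in $L$ uniformly through this iteration: each rescaling alters the geometry of the approximate half-space problem, and one must verify that the lower-order contributions do not degenerate the ellipticity constants of the associated linear nonlocal equation, so that the H\"older estimates can be applied at every scale with universal constants. A secondary, but genuinely delicate, technical point is the rigorous passage from viscosity test functions for $u$ to those for $\bar u$ through the hodograph transform, which is needed to make both the derivation of the nonlocal equation and the improvement-of-flatness step fully justifiable.
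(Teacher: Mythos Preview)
Your overall outline points in the right direction, but there is a genuine gap precisely at the step you treat most casually: the ``iteration on dyadic parabolic scales'' that is supposed to upgrade the initial $C^\alpha$ estimate for $\bar u$ to a $C^\alpha$ estimate for $D_{\mathbb{R}^{n-1}}\bar u$. The difficulty is that in order to apply the H\"older theory for linear nonlocal equations to the increments $\delta_{he}\bar u/h^\beta$, you need the perturbative error terms in the equation (coming from the nonlinearity and from the geometry of the approximate half-space) to be negligible. These errors are controlled by a smallness condition on $\varepsilon$, but the required smallness depends on the step $h$ and deteriorates as $h\to 0$. So the iteration as you describe it does not close: at each scale you would need a smaller $\varepsilon$, and there is no uniform $\varepsilon_0$ for which the argument goes through for all $h$. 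You flag ``controlling the error term in $L$ uniformly through this iteration'' as the principal obstacle, which is correct, but you do not propose a mechanism to overcome it.

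The paper resolves this by replacing your direct application of H\"older estimates with a compactness/blow-up argument. The key idea is to strengthen the inductive hypothesis: instead of assuming merely that $\sup_h\osc \delta_{he}\bar u/h^\beta$ is bounded, one assumes a bound on a (truncated) $C^\eta$ seminorm of $\delta_{he}\bar u/h^\beta$ for some small $\eta>0$. This extra H\"older control provides enough compactness to handle arbitrarily small $h$ simultaneously. The improvement step (Lemma~\ref{lem:improvement}) is then proved by contradiction: one blows up along a sequence $\varepsilon_k\to 0$, shows (Section~\ref{sec:jensen}, via inf/sup convolutions adapted to the free boundary) that the limit of the rescaled increments satisfies the global linearized problem with bounded measurable coefficients, and invokes a Liouville theorem (Theorem~\ref{thm:liouville}) to force the limit to be constant, contradicting the assumed failure of the estimate. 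Two further points you underweight: the initial $C^\alpha$ estimate for $\bar u$ is not immediate from monotonicity as you suggest, but requires a Harnack-type argument (Lemma~\ref{lem:point_est}) combining barrier constructions with a density-in-time argument; and the passage from the viscosity formulation for $u$ to an equation for the difference of two hodographs cannot be done by simply lifting test functions---it requires the full machinery of sup/inf convolutions and conformal deformations to produce a regular contact point between two free boundaries.
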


Let us make some initial remarks on the the nature of the proof Theorem \ref{thm: main} and highlight some issues that required considerable attention. Firstly, the ``linearization'' of the free boundary condition hinted at above does not yield exactly \eqref{eqn:intro frac heat equation}, but an integro-differential parabolic equation with bounded measurable coefficients, specifically, there will be a time-dependent coefficient in front of $(-\Delta)^{1/2}$ in the linearized equation. This coefficient will not be necessarily continuous in time, but it will be still be bounded between two positives constants, so we still have at our disposal a H\"older regularity estimate. All of this indicates that the theory of integro-differential parabolic equations deserves consideration along the array of tools used in the analysis of Hele-Shaw type flows. It would be worthwhile to investigate to what extent this tools yield answers to unresolved issues such as regularity for Hele-Shaw problems in heterogeneous media, problems without variational structure, and two-phase problems.

In hindsight, the irregularity of the linearized equation has to do with the potentially high oscillation in time of the slope of $u$ near $\Gamma_u$, which will be reflected in the compactness and subsequent ``linearization'' argument. Recall there are global solutions of the problem with low regularity with respect to time: given any continuous, strictly positive function $a(t)$, we have the following spatially flat, global solution to Hele-Shaw
\begin{align}
  v(x,t) := a(t)\max\1(x_n-A(t)),0\2, \qquad A(t) = \int_t^0 a(s)ds.\label{eqn:intro flat solutions}
\end{align}
The free boundary regularity in time is limited by the regularity of $a(t)$, and to obtain further regularity with respect to time one must impose further assumptions. We contend that this issue is closely related to the differentiability in time to solutions of parabolic integro-differential equations when the Dirichlet data is not regular in time. Indeed, the nonlocal effects mean that lack of smoothness of the Dirichlet data with respect to time may affect the interior differentiability of the solution (see \cite[Section 6]{MR3148110} for an example). Note also that, in any case, the spatial normal to $\Gamma_v$ is constant in space and time (always equal to $e_n$), which corresponds to $D_{\mathbb{R}^{n-1}}\bar v = 0$ in the terminology of Theorem \ref{thm: main}, showing the result holds trivially for these spatially flat fronts. 

Of course, another well known scenario giving rise to irregular behavior in time for $u$ is whenever two pieces of the free boundary approach each other leading to a topological change.

\begin{remark} Later in Corollary \ref{cor:bdd_measurable} we state a more general result for the case where the solution is close to a planar profile with variable slopes that change continuously in time --the resulting estimate being independent of the modulus of continuity.

\end{remark}

For a discussion of results related to Theorem \ref{thm: main}, including the regularity theory developed in works of Choi, Kim, and Jerison (crucially, \cite{MR2306045,MR2603767}) see Section \ref{sec:related results} below. For now, let us  highlight some overall differences between Theorem \ref{thm: main} and the results in \cite{MR2306045,MR2603767}. The latter prove $C^1$ differentiability both in space and time of the free boundary under an extra assumption on the regularity of $u$ with respect to time (see \cite[Theorem 1.2]{MR2603767}). It is clear an assumption of this kind must be imposed, or else there is no way of preventing the spatially flat fronts with an an arbitrary $a(t)$ that were discussed above. However, the methods in \cite{MR2306045,MR2603767} do not seem to use this assumption in proving the $C^1$ regularity of $\Gamma_u$ in space for a fixed time. On the other hand, Theorem \ref{thm: main} proves $C^\alpha$ continuity not only in space but also in time for the spatial normal to $\Gamma_u$. This is stronger than just $C^1$ regularity in space of $\Gamma_u$ for fixed time, but does not go as far as $C^1$ regularity in space and time for $\Gamma_u$ (which as illustrated above, requires further assumptions). This goes back to the point made above regarding the relation between the boundary data and the interior differentiability of solutions to nonlocal parabolic equations. In particular, it can be said Theorem \ref{thm: main} and the results in \cite{MR2603767}  although having some overlap in cases they treat, they ultimately deal with different aspects of the problem and neither result is contained in the other -and each employs different methods. 

Furthermore, it is worth to recall that the evolution is an eminently nonlocal flow from the perspective of the free boundary $\Gamma_u$ itself,  while the hypothesis of the theorem entail just a local condition by looking not just at $\Gamma_u$ but the solution $u$ itself \eqref{eqn: main thm flatness}. As such the result holds regardless of far away behavior of $\Gamma_u$, except that which may prevent the validity of \eqref{eqn: main thm flatness}, of course. In particular, since it only concerns the behavior of the function in some space-time cylinder, Theorem \ref{thm: main} applies as an interior result to solutions to Hele-Shaw problems on general domains, regardless of the conditions imposed on $u$ along the fixed boundary or at infinity.

\subsection{Literature overview}\label{sec:related results}

The Hele-Shaw flow is one of the simplest models of interphase evolution, arising in fluid mechanics \cite{saffman1958penetration,richardson1972hele} and appearing in many guises throughout mathematics. It's relation to the porous flow has For instance, work of Caginalp \cite{Caginalp1989} and later Caginalp and Chen \cite{Caginalp1998ConvergencePhaseField} shows how the Hele-Shaw and Stefan problems arise as sharp interface limits of phase field models --in which case one can also obtain more accurate system involving surface tension effects. The Hele-Shaw problem also appears as a limit for the porous medium equation when the power in the nonlinearity goes to infinity, see work of Elliot et al \cite{elliott1986mesa}. Finally, we mention the connection of Hele-Shaw and Stefan type problems with models for internal diffusion-limited aggregation (internal DLA):  Gravner and Quastel showed in \cite{gravner2000internal} that the hydrodynamic limit the density for particles in internal DLA converges to a solution of a one phase Stefan problem.

\textbf{Existence and uniqueness.} There is a wide literature regarding the existence, uniqueness and regularity of solutions. Short time existence of a classical solution starting from smooth initial conditions was done by Escher and Simonett in \cite{escher1997classical}. A variational approach was set forth by Elliott and Janovsk{\`y} in \cite{elliott1981variational}, formulating the problem in terms of the time integral of $u$, which is shown to solve a variational inequality. Existence and uniqueness for viscosity solutions, including a comparison principle, was proved by Kim in \cite{kim2003uniqueness}. As part work of subsequent work by Kim and Mellet dealing with homogenization, they determine in \cite{kim2009homogenization} conditions under which the variational and viscosity formulations coincide.

\textbf{Regularity results: Comparison arguments.} The existence of global in time smooth solutions was obtained by Daskalopoulos and Lee \cite{DaskalopoulosLee2004AllTimeSmoothSolutions} under a smoothness and convexity assumption on the initial condition. The first regularity results for flat interfaces of viscosity solutions can be found in works of Kim \cite{MR2218549,kim2006regularity}. Subsequently, Jerison and Kim studied (in the planar case) the evolution of the problem starting from singular initial data \cite{MR2203166}, determining the exact asymptotic behavior of the free boundary at a singular point. Such analysis of the asymptotic behavior was later done in higher dimensions by Choi, Jerison and Kim \cite{MR2306045}, along with a Lipschitz/flatness implies differentiability result for a problem with constant Dirichlet data \cite{MR2603767}. Shortly after this was generalized and improved in the follow up work \cite{MR2203166}, in particular they show that the free boundary (starting from an initial Lipschitz interface) improves its flatness in a manner which is proportional to its displacement (at least for small times): if a point of the free boundary has moved an amount $\delta$ away from its initial configuration (which is assumed Lipschitz), then near that displaced point the free boundary  will have a flatness of order $\delta$, the constant depending on the initial Lipschitz condition on the free boundary. Their results for instance say that an initial data given by a global Lipschitz graph will become smooth and remain smooth for all later times. Furthermore, their results imply a quantitative version of Sakai's theorem for variational solutions in two dimensions \cite{sakai1993regularity}. These results are deeply connected, and build upon Caffarelli's theory on the elliptic free boundary problems \cite{caffarelli1989harnack}, and the theory of Athanasopoulos, Caffarelli, and Salsa for the two-phase Stefan problem \cite{MR1397563} (see also the discussion further below).

There are a several important themes involved in the proofs in \cite{MR2306045,MR2603767} that we shall review superficially. On one hand, there is the use the interior Harnack inequality and barrier arguments to propagate the initial Lipschitz property (or flatness) of the free boundary for a small positive time. At the same time, it is important to determine how $u$ grows away from the free boundary for a small time interval -and here the tools used to analyze the boundary behavior of harmonic functions become crucual. Once the growth $u$ is controlled, along with the free boundary velocity\footnote{this is one of the places where a further assumption on $u$ is required, specifically a left-side time derivative bound see condition (1.1) in \cite[Theorem 1.2]{MR2603767}.}, one expects -and this is at a heuristic level- that the problem behaves a lot like the time-independent one-phase problem. Then,  $C^1$ regularity in  space and time of the free boundary is obtained by an iteration argument.

\textbf{Analyzing the free boundary via the hodograph transform, compactness and blow up arguments.} The ideas in the present work are closer in spirit to De Silva's work \cite{MR2813524} concerning the one-phase (time independent) problem with H\"older continuous coefficients. In \cite{MR2813524}, regularity of flat free boundary points is proved by a compactness argument and classification of blow up limits. The equation governing the blow up limits turns out to be a homogeneous Neumann problem for the Laplacian (which is known to be related to the time independent case of \eqref{eqn:intro frac heat equation}).

The approach in this paper focuses on the hodograph transform defined in Section \ref{sec:hodograph}. The hodograph transform is a well known tool in free boundary problems. A well known application of this transform is in the higher regularity theory for $C^{1}$ free boundaries by Kinderlehrer and Nirenberg \cite{Kinderlehrer1977regularity}, and more recently for lower dimensional obstacle problems by Koch, Petrosyan, and Shi \cite{Koch2015Higher}.  

For a problem evolving with time, a direct application of the improvement of flatness approach from \cite{MR2813524} to the hodograph transform presents several obstacles, making the recovery of $C^{1,\a}$ estimates more difficult. For instance, the scaling of the problem would require proving that the free boundary is $C^{1,\a}$ in time, which is false in general, as can be seen by the existence of spatially planar profiles with variable slopes \eqref{eqn:intro flat solutions} discussed earlier in the introduction. As mentioned earlier, being a solution to \eqref{eqn:intro frac heat equation} does not always guarantee interior regularity for $\p_t \bar u$: the nonlocal effects and Dirichlet data which is discontinuous in time immediately affects $\p_n \bar u$, making $\p_t \bar u$ discontinuous in the interior, see the example discussed in \cite[Section 6]{MR3148110}.

The idea of considering the equation for the gradient or a difference quotient of the free boundary is well known in the regularity theory of free boundary problems. It was used Caffarelli's work on two phase free boundary problems \cite{MR990856,MR973745}, in Athanasopoulos, Caffarelli, and Salsa's \cite{MR1394964,MR1397563,MR1486632} theory for two phase Stefan problems, and in the aforementioned works of Choi, Jerison, and Kim on the Hele-Shaw problem \cite{kim2006regularity,MR2218549,MR2306045, MR2603767}, among others. As it was also mentioned earlier, in \cite{MR2813524} De Silva obtained an improvement the free boundary for the one phase stationary problem, covering the case of operators in nondivergence form with H\"older continuous coefficients.

\textbf{Free boundary problems and integro-differential equations.} Given the widely known representation of the Dirichlet to Neumann map for the Laplacian as the fractional Laplacian, it should not be surprising that integro-differential techniques have something to say about boundary and free boundary problems. A prime example is the Signorini problem, known also as the thin obstacle problem, which is equivalent to the obstacle problem for the fractional Laplacian, as studied by Silvestre in \cite{Silvestre2007obstacle}, where regularity of the free boundary is obtained under a convexity condition. The two phase version of this problem is studied by Allen, Lindgren, and Petrosyan in \cite{Allen2015two}. We also mention work of the second author in collaboration with Schwab \cite{Guillen2014neumann,Guillen2015neumann}, where integro-differential techniques are used to study the homogenization of a boundary value problems, including linear Neumann problems with strong gradient dependence. 

The increasing number of readily available results for integro-differential equations further underlines the potential for applications to free boundaries. As highlighted earlier, our method requires results from the theory of parabolic integro-differential equations with bounded measurable coefficients. Such results -particularly H\"older estimates- can be found in work of the first author and D\'avila \cite{MR3148110}, as well as recent work of Schwab and Silvestre \cite{Schwab2014regularity}, the latter work considering regularity results for operators with non-symmetric kernels that are allowed to vanish in large portions of space. Likewise, it is worth mentioning work of Kassmann and Schwab \cite{Kassmann2014Parabolic}, which deals with divergence form operators but allows for some singular kernels (in this work, we always considered viscosity solutions, but it is conceivable to work from the beginning with weak solutions using a variational formulation). Methods from the theory of integro-differential equations appear in several other places in our proof. In Section \ref{sec:harnack}, we prove a Harnack estimate via a differential inequality argument similar to one used in Silvestre's work on critical Hamilton-Jacobi \cite{MR2737806}. Furthermore, in Section \ref{sec:improvement}, various ideas from work of Serra \cite{serra2014c,MR3385173} and from work of the first author and Kriventsov \cite{2015arXiv150406294C, 2015arXiv150507889C} were used to bootstrap a partial H\"older estimate up to a $C^{1,\alpha}$ estimate.

\subsection{Overview of the method} 

Let us describe further our strategy, highlighting the major steps. The first step is to extend the profile of the free boundary which is parametrized in terms of $\bar u$ by the hodograph transform of $u$. We consider, $\bar u:\bar B_1^+:=\{x\in B_1: x_n\geq0\}\to\R$ such that,
\begin{align*}
u(x-(t+\e\bar u(x,t))e_n,t) = x_n.
\end{align*}
The main idea is that $\bar u$ measures the horizontal distance between the graph of $u$ and the planar profile $(x_n+t)_+$ at scale $\e$. See Figure \ref{fig:hodograph} for a geometric description.

\afterpage{
\begin{figure}[t!]
\begin{center}
\includegraphics[width=14cm]{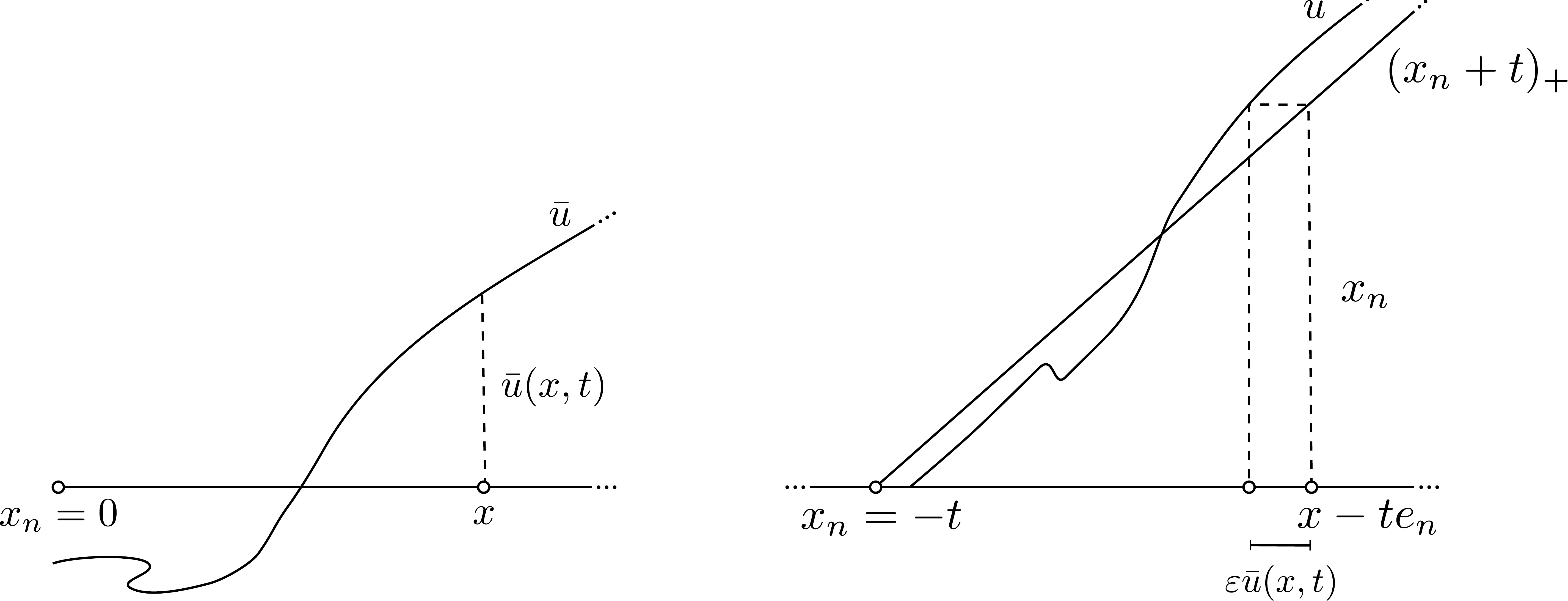}
\end{center}
\caption{Hodograph of $u$ with respect to $\e$ at a given time.}
\label{fig:hodograph}
\end{figure}
}

By implicit differentiation, we get that $\bar u$ satisfies some nonlinear relations depending on the flatness parameter $\e$ (see \eqref{eq:laplacian} and \eqref{eq:boundary_limit} respectively),
\begin{alignat*}{3}
\D \bar u &= \e F_\e(D^2\bar u, D\bar u) \qquad &&\text{ in } \qquad &&B_1^+ := \{x\in B_1:x_n>0\},\\
\partial_t \bar u &= \p_n \bar u + \e G_\e(D\bar u) \qquad &&\text{ in } \qquad &&B^{n-1}_1 := \{x \in B_1:x_n=0\}.
\end{alignat*}
As $\e\to0$ they linearize to,
\begin{align}\label{eq:linearized_intro}
\D\bar u = 0 \text{ in } B_1^+ \qquad \text{ and } \qquad \p_t \bar u = \p_n \bar u \text{ in } B^{n-1}_1.
\end{align}
In other words, $\bar u$ restricted to $B^{n-1}_1$ satisfies a nonlocal heat equation of order one. Moreover, a similar equation also holds for $D_{\R^{n-1}}\bar u$, which will imply the desired H\"older estimate.

We aim to establish an improvement of flatness by subsequently proving H\"older estimates for difference quotients of the form,
\[
\frac{\d_{he}\bar u}{h^\b}(x,t) := \frac{\bar u(x+(h/2)e,t)-\bar u(x-(h/2)e,t)}{h^\b}, \qquad \text{$h>0$, $e\in\p B_1^{n-1}$, $\b\in(0,1)$}.
\]
Where the exponent $\beta$ is improved by a fixed amount on every step. Leading to a $C^{1,\alpha}$ estimate after finitely many iterations. Several steps have to be settled in order to carry out this plan.

\textbf{Compactness:} In Section \ref{sec:harnack} we prove a Harnack type of estimate for sufficiently flat solutions. The ideas of the proof borrow significantly from the Harnack inequality argument used by De Silva in \cite{MR2813524} for the time-independent problem, and the Point Estimate for Hamilton-Jacobi equations with critical fractional diffusion used by Silvestre in \cite{MR2737806}. As a consequence, we prove the following: given $\e_k\to 0$, and sequence of solutions $u_k$ where $u_k$ is $\e_k$-flat, then the sequence $\d_{he}\bar u_k/h^\b$ has an accumulation point with respect to (local) uniform convergence for any $\b\in[0,\a)$, $h>0$, and $e\in\p B_1^{n-1}$.

\textbf{H\"older Bootstrap:} We reach $C^{1,\a}$ regularity by improving a finite number of times the exponent from a H\"older estimate for the solution. Ideally we would like to have the following implications for sufficiently flat solutions,
\begin{align}\label{eq:eta_zero}
\left\|\bar u\right\|_{L^\8(Q_1^{n-1})} + \sup_{h>0} \osc_{Q_1^{n-1}} \frac{\d_{he}\bar u}{h^\b} \leq 1 \qquad&\Rightarrow\qquad
\sup_{h>0} \left[\frac{\d_{he}\bar u}{h^\b}\right]_{C^{\a}\1Q_{1/4}^{n-1}\2} \leq C,\\
\nonumber &\Rightarrow\qquad \sup_{h>0} \osc_{Q_{1/16}^{n-1}} \frac{\d_{he}\bar u}{h^{\b+\a}} \leq C.
\end{align}
This result is easier to obtain if we allow the flatness to depend on $h$. However, this dependence could deteriorate as $h\to 0$. At this point the idea is to borrow some compactness from a previous H\"older estimate. In other words, we strengthen the hypothesis by considering $\eta\in(0,1)$ and (roughly) establishing that,
\begin{align*}
\left\|\bar u\right\|_{L^\8(Q_1^{n-1})} + \sup_{h>0}\left[\frac{\d_{he}\bar u}{h^\b}\right]_{C^\eta\1Q^{n-1}_1\2} \leq 1 \qquad&\Rightarrow\qquad
\sup_{h>0}\left[\frac{\d_{he}\bar u}{h^\b}\right]_{C^{\a+\eta}\1Q_{1/4}^{n-1}\2} \leq C,\\
&\Rightarrow\qquad \sup_{h>0} \left[\frac{\d_{he}\bar u}{h^{\b+\a}}\right]_{C^{\eta\a/4}\1Q_{1/16}^{n-1}\2} \leq C.
\end{align*}
The precise statement can be found in Lemma \ref{lem:improvement} and Corollary \ref{cor:improvement} which involve a different type of H\"older seminorms defined in the preliminary Section \ref{sec:prelim}. Theorem \ref{thm: main} follows from Corollary \ref{cor:main_thm}.

As an observation, notice that $\eta=0$ corresponds to the standard approach given by \eqref{eq:eta_zero}, however for $\eta>0$ we have the advantage that the uniform $C^\eta$ control now provides us with additional compactness. We use this to control the difference quotients when $h$ is arbitrarily small. This is one of the ideas that we learnt from recent estimates for nonlocal equations established by Serra in \cite{MR3385173, serra2014c}. Also recently, Kriventsov in collaboration with the first author, established time regularity estimates for parabolic problems using this technique in \cite{2015arXiv150406294C, 2015arXiv150507889C}.

The proof of Lemma \ref{lem:improvement} proceeds by a compactness argument. As $\e_k\to0$ and a rescaling of $\d_{he} \bar u_k$ converges to $w$ we recover that,
\begin{alignat*}{3}
\D w &= 0 \qquad &&\text{ in } \qquad &&\R^n_+ := \{x_n>0\},\\
\inf_{a\in[\l,\L]} a\p_n w \leq \p_t w &\leq \sup_{a\in[\l,\L]} a\p_n w \qquad &&\text{ in } \qquad &&\R^{n-1} := \{x_n=0\},
\end{alignat*}
where $0<\l\leq\L<\8$ depend only on the dimension. The main tool we use after this step is the Liouville's Theorem for fully nonlinear, nonlocal parabolic equations that results from a Harnack inequality. Such result can be found for instance in work of D\'avila and the first author \cite{chang2014h}, or in more recent work of Schwab and Silvestre \cite{Schwab2014regularity}.

\textbf{Limiting Equations:} In Section \ref{sec:improvement} we use that an accumulation point obtained as we send the flatness to zero satisfies the nonlocal heat equation. We prove this qualitative result in Section \ref{sec:jensen}. In other to reach this goal we consider a careful adaptation of the method of inf/sup convolutions used by Kim in order to establish the comparison principle for viscosity solutions of Hele-Shaw in \cite{MR1994745}. We will see that applying $\d_{he}/h^\b$ to the equations satisfied by $\bar u_k$ deteriorates the diffusion coefficient in terms of $|Du_k|$, which explains why we do not recover a constant coefficient equation for $w$.

One of the challenges of the outlined strategy comes from the scaling that corresponds to the H\"older bootstrap in Section \ref{sec:improvement}. The quantity we look to control is the difference quotient $\d_{he}\bar u_k/h^\b$ for which the appropriated scaling makes the oscillation of $\bar u_k$ to grow. In Section \ref{sec:jensen} it is important to keep in mind that in general $\bar u_k$ is not a compact sequence and only $\d_{he} \bar u_k$ is assumed to converge to $w$.

{\bf Acknowledgments.} Nestor Guillen was partially supported by the National Science Foundation, grant DMS-1201413. The authors would like to thank Inwon Kim, Ovidiu Savin and Daniela De Silva for many helpful discussions.

\section{Preliminaries}\label{sec:prelim}

In this section we set up some notation, define the notion of viscosity solutions, the hodograph transform and state the Liouville's Theorem for fully nonlinear, nonlocal parabolic equations.

\subsection{Notation}

$e_n$ denotes the $n^{th}$ vector of the canonical basis of $\R^n$.
\begin{alignat*}{3}
&&&\R^{n-1} := \{x\in\R^n:x_n=0\}\\
&&&\R^{n}_+ := \{x\in\R^n:x_n>0\}\\
&&&\bar \R^{n}_+ := \{x\in\R^n:x_n\geq 0\}\\
&&&B_{r_0}(x_0) := \{x\in\R^n:|x-x_0|<r_0\}\quad &&B_{r_0} := B_{r_0}(0)\\
&&&B_{r_0}^{n-1}(x_0) := B_{r_0}(x_0)\cap\R^{n-1}\quad &&B_{r_0}^{n-1} := B_{r_0}^{n-1}(0)\\
&&&B_{r_0}^+(x_0) := B_{r_0}(x_0)\cap\R^n_+\quad &&B_{r_0}^+ := B_{r_0}^+(0)\\
&&&\bar B_{r_0}^+(x_0) := B_{r_0}^+(x_0)\cup B_{r_0}^{n-1}(x_0)\quad &&\bar B_{r_0}^+ := \bar B_{r_0}^+(0)
\end{alignat*}
\begin{alignat*}{3}
&&&Q_{r_0}(x_0,t_0) := B_{r_0}(x_0)\times(t_0-r_0,t_0] \qquad &&Q_{r_0} := Q_{r_0}(0,0)\\
&&&Q_{r_0}^{n-1}(x_0,t_0) := B_{r_0}^{n-1}(x_0)\times(t_0-r_0,t_0] \qquad &&Q_{r_0}^{n-1} := Q_{r_0}^{n-1}(0,0)\\
&&&Q_{r_0}^+(x_0,t_0) := B_{r_0}^+(x_0)\times(t_0-r_0,t_0]\qquad &&Q_{r_0}^+ := Q_{r_0}^+(0,0)\\
&&&\bar Q_{r_0}^+(x_0,t_0) := \bar B_{r_0}^+(x_0)\times(t_0-r_0,t_0] \qquad &&\bar Q_{r_0}^+ := \bar Q_{r_0}^+(0,0)
\end{alignat*}

The last four sets are referred as \textit{parabolic cylinders of radius $r$ and centered at $(x_0,t_0)$}.

We use two different topologies for the time variable. The Euclidean one corresponding to the standard topology of $\R$ will be mostly assumed whenever we say that a given function is continuous or semicontinuous. The parabolic topology, where the family of intervals $\{(t_0-r,t_0]\}_{r>0}$ form a basis for the neighborhoods of $t_0$, will be used to establish H\"older regularity estimates. For instance, a H\"older modulus of continuity for $u$ at a point $(x_0,t_0)$ will be given by saying that the oscillation of $u$ in a parabolic cylinder of radius $r$ and centered at $(x_0,t_0)$ is controlled by $r^\a$ for some $\a\in(0,1]$. This is indeed equivalent to H\"older continuity in the Euclidean topology. The specific topology considered will be declared whenever is necessary.

The boundary operator $\p$ is always taken for a fixed time and with respect to the standard topology of the Euclidean space.

For an (open) domain $\W\ss\R^n$ and $u\in C(\W\to[0,\8))$ we define respectively the zero set, the positivity set, and the free boundary of $u$ as,
\begin{align*}
\W^0_u &:= \{x\in\W:u(x)=0\},\\
\W^+_u &:= \{x\in\W:u(x)>0\},\\
\G_u &:= \p \supp u \cap\W.
\end{align*}
Most of the time the domain $\W = \W(t)$ will depend on the time variable. In case this needs to be explicitly emphasized for the previous constructions we use,
\begin{align*}
\W^0_u(t) &:= \{x\in\W(t):u(x,t)=0\},\\
\W^+_u(t) &:= \{x\in\W(t):u(x,t)>0\},\\
\G_u(t) &:= \p\supp u(\cdot,t)\cap\W(t).
\end{align*}

\subsection{Viscosity Solutions}\label{sec:viscosity_sol}

For this section we use the Euclidean topology for the time variable and consider for $t\in(t_0,t_1]$, $\W(t)\ss\R^n$ an (open) domain such that $\p \W(t)$ is varying continuously in time in the sense of Hausdorff distance.

\begin{definition}[Speed of the interphase]
Let $r,h>0$, $\nu \in \p B_1$ and,
\begin{align*}
u &: C_{\nu,r}(x)\times(t-r,t]\to[0,\8),\\
C_{\nu,r,h}(x) &:= \{y\in\R^n: |(y-x) - ((y-x)\cdot\nu)\nu|<r \text{ and } |(y-x)\cdot\nu|<h\} \qquad \text{(cylinder)},
\end{align*}
such that there exists $\gamma:(B_r \cap span(\nu)^\perp)\times(-r,0] \to\R$ that parametrizes the free boundary of $u$ in the following way,
\begin{align*}
&\G_u(s) = \{y\in C_{\nu,r,h}(x): (y-x)\cdot\nu = \gamma((y-x) - ((y-x)\cdot\nu)\nu,t+s)\}.
\end{align*}
Then, if $\gamma$ is punctually first order differentiable at the origin we define the speed of the interphase at $x \in \G_u(t)$ by
\[
\frac{\p_t u}{|Du|}(x,t) := \p_t\gamma(0,0) = -\lim_{s\to0^-}\frac{\gamma(0,s)}{|s|}.
\]
\end{definition}

We will frequently use parametrizations of the form,
\[
\G_u(t) = \{x_n = -t-\e\bar u(x',t)\}.
\]
In this case, assuming enough regularity for $\bar u$, we obtain that,
\[
\frac{\p_t u}{|Du|} = \frac{1+\e \p_t\bar u}{\sqrt{1+\e^2|D\bar u|^2}}.
\]

\begin{definition}\label{def:gamma_reg}
Under the assumptions of the previous definition we say that:
\begin{enumerate}[label=(\alph*)]
\item $x\in \G_u(t)$ is a regular point in space and time if $\gamma$ is punctually $C^{1,1}$ at the origin. In other words, there exists $D\gamma(0,0)\in span(\nu)^\perp$ and $\p_t\gamma(0,0)\in\R$ such that,
\[
\gamma(x,t) = D\gamma(0,0)\cdot x+ \p_t\gamma(0,0)t + O(|x|^2+t^2).
\]
\item $\G_u \in C^1$ if $\gamma \in C^1$.
\end{enumerate}
\end{definition}

\begin{definition}[Free boundary relation]
Given $u(\cdot,t)\in C(\W(t)\to[0,\8))$ such that $\G_u\in C^1$ we say that,
\begin{align*}
\frac{\p_t u}{|Du|} \leq |Du| \text{ holds in the classical sense at $x\in\G_u(t)$},
\end{align*}
if $u(\cdot,t)\in C^1(\W_u^+(t)\cup\G_u(t))$ and
\[
\frac{\p_t u}{|Du|}(x,t) \leq |Du|(x,t) = \lim_{h\to0^+} \frac{u(x-h\nu,t)}{h}.
\]
\end{definition}

The relation $\frac{\p_t u}{|Du|} \geq |Du|$ in the classical sense is defined in a similar way.

\begin{definition}[Comparison Subsolution]
For $u(\cdot,t) \in C(\W(t)\to[0,\8))$ continuous in time we say that it is a comparison subsolution to the Hele-Shaw problem described in \eqref{eq:harmonic} and \eqref{eq:free_boundary} if:
\begin{enumerate}[label=(\alph*)]
\item For each $t\in(t_1,t_0]$, $u(\cdot,t)\in C^2(\W^+_u(t))$ and
\[
\D u \geq 0 \text{ in $\W^+_u$}.
\]
\item $\G_u \in C^1$, for each $t\in(t_1,t_0]$, $u(\cdot,t)\in C^1(\W^+_u(t)\cup\G_u(t))$, and
\begin{align*}
\frac{\p_t u}{|Du|} \leq |Du| \text{ holds in the classical sense for all $x\in\G_u(t)$.}
\end{align*}
\end{enumerate}
\end{definition}

We define the comparison supersolutions in a similar way by changing the direction of the inequalities above.

\begin{definition}[Contact]
Let $u$ be upper semicontinuous and $v$ lower semicontinuous defined in some common domain $\W\ss\R^n$. We say that $v$ touches $u$ from below at $x\in\W$ if $u(x)=v(x)$ and $u\geq v$ in some neighborhood of $x$.
\end{definition}

In the previous case we might also say that $u$ touches $v$ from above at the given point. This notion is also used in the case where both functions depend on the time variable.

\begin{definition}[Viscosity superharmonic functions]
We say that a lower semicontinuous function $u$ is a viscosity superharmonic function in $\W$ if whenever a smooth test function $v$ touches $u$ from below at $x$ we have that $\D w(x)\leq 0$. We denote it by,
\[
\D u \leq 0 \text{ in the viscosity sense over $\W$}.
\]
\end{definition}

We define a (upper semicontinuous) subharmonic function in a similar way by testing with smooth functions from above and changing the direction of the last inequality. Continuous functions that are both sub and superharmonic in the viscosity sense are harmonic functions in the classical sense.

\begin{definition}
We say that $u(\cdot,t)\in C(\W(t)\to[0,\8))$ has a continuously increasing support if $\supp u(\cdot, s)\ss \supp u(\cdot, t)$ for all $s<t$ and $\supp u(\cdot, t)$ varies continuously in time with respect to the Hausdorff distance.
\end{definition}

\begin{definition}[Viscosity Supersolution of Hele-Shaw]
Let $t\in[t_0,t_1]$, $\W(t)\ss\R^n$ an (open) domain such that $\p \W(t)$ is varying continuously in time in the sense of Hausdorff distance, and $u(\cdot,t)\in C(\W(t)\to[0,\8))$ lower semicontinuous in time with a continuously increasing support. Under these hypothesis we say that $u$ is a viscosity supersolution to the Hele-Shaw problem in the time interval $(t_0,t_1]$ if:
\begin{enumerate}[label=(\alph*)]
\item For each $t\in(t_1,t_0]$,
\[
\D u \leq 0 \text{ in the viscosity sense over $\W^+_u(t)$}.
\]
\item If $v$ is comparison subsolution, then $v$ can not touch $u$ from below at any $x \in \G_u(t)$.
\end{enumerate}
We denote the free boundary relation by,
\begin{align*}
\frac{\p_t u}{|Du|} \geq |Du| \text{ in the viscosity sense over $\G_u$}.
\end{align*}
\end{definition}

We define a viscosity subsolution in a similar way by requiring upper semicontinuity in time, subharmonicity in the positivity set, and that no comparison supersolution can touch $u$ from above at a free boundary point.

In order to define viscosity solutions that could be discontinuous in time we need to introduce the following upper a lower semicontinuous envelopes,
\begin{align}\label{eq:semi_cont_env}
u^*(x,t) =\limsup_{(y,s)\to (x,t)} u(y,s) \qquad \text{and} \qquad u_*(x,t) =\liminf_{(y,s)\to (x,t)} u(y,s).
\end{align}

\begin{definition}[Viscosity Solutions of Hele-Shaw]
Let $t\in(t_0,t_1]$, $\W(t)\ss\R^n$ an (open) domain such that $\p \W(t)$ is varying continuously in time in the sense of Hausdorff distance, and $u(\cdot,t)\in C(\W(t)\to[0,\8))$ lower semicontinuous in time with a continuously increasing support. Under these hypothesis we say that $u$ is a viscosity supersolution to the Hele-Shaw problem in the time interval $(t_0,t_1]$ if:
\begin{enumerate}[label=(\alph*)]
\item For each $t\in(t_1,t_0]$,
\[
\D u_* = \D u^* = 0 \text{ in the viscosity sense over $\W^+_u(t)$}.
\]
\item If $v$ is comparison subsolution, then $v$ can not touch $u_*$ from below at any $x \in \G_{u_*}(t)$.
\item If $v$ is comparison supersolution, then $v$ can not touch $u^*$ from above at any $x \in \G_{u^*}(t)$.
\end{enumerate}
We denote the free boundary relation by,
\begin{align*}
\frac{\p_t u}{|Du|} = |Du| \text{ in the viscosity sense over $\G_u(t)$}.
\end{align*}
\end{definition}

For lower semicontinuous functions, just the monotonicity of the support implies that $\supp u(\cdot, t)$ varies continuously in time, however this is not necessarily the case for upper semicontinuous functions. The continuity of the supports is what actually connects the solutions in time. Without it uniqueness fails as can be seen by considering an arbitrary solution in an interval $(t_0,t_1]$ and then just drastically changing the support immediately after time $t=t_1$. The continuity of $\supp u(\cdot, t)$ is indeed an important ingredient in the proof of the following comparison principle. The argument goes along the lines of Theorem 2.2 in \cite{salsa} by the continuity method.

\begin{property}[Comparison Principle]\label{pro:comparison}
Let $u$ be a viscosity supersolution of Hele-Shaw and $\varphi$ a comparison subsolution both defined in $\W(t)$ for $t\in[t_0,t_1]$ such that:
\begin{enumerate}[label=(\alph*)]
\item $u < \varphi$ in $\supp u(\cdot,t_0)$.
\item $u < \varphi$ on $\p\W(t) \cap \supp u(\cdot,t)$ for all $t\in[t_0,t_1]$.
\end{enumerate}
Then,
\[
u < \varphi \text{ in $\supp u(\cdot,t)$ for all $t\in[t_0,t_1]$}.
\]
\end{property}

A similar comparison holds between viscosity subsolution and comparison supersolutions. Notice that just a monotonicity hypothesis for the supports in the case of viscosity subsolutions will not give us the desired property. For instance, this will allow to change the solution by the harmonic replacement in $\W(t)$ at any given time.

As mentioned in the introduction, existence and uniqueness of viscosity solutions was established by Kim in \cite{MR1994745}. Existence followed by approximation with the porous medium equation as the power goes to infinity. Uniqueness was obtained by establishing a comparison principle between two viscosity solutions which turns out to be much more delicate than Property \ref{pro:comparison}. The main issue for such comparison principle is that usually one does not have enough regularity to evaluate the equation at a contact point between two free boundaries. The comparison proved in \cite{MR1994745} requires that the gradient of the functions over the initial free boundaries do not vanish. Without a similar hypothesis it is not expected to have uniqueness. Let us proceed to informally give an argument for such claim.

Consider for $n=2$ the case where the initial support of the solution forms an angle $\theta \in (0,\pi/2)$. Notice that the gradient of the harmonic function vanishes at the vertex like the distance to some power greater than one. In this case it was proved in \cite{EJM:2320324} that the vertex persists with the same angle for a positive amount of time. To construct a non unique problem one can look at an initial data formed by two copies of the previous example, joined by the vertices, as it is illustrated in Figure \ref{fig:non_uniqueness}.

\begin{figure}[t!]
\begin{center}
\includegraphics[width=14cm]{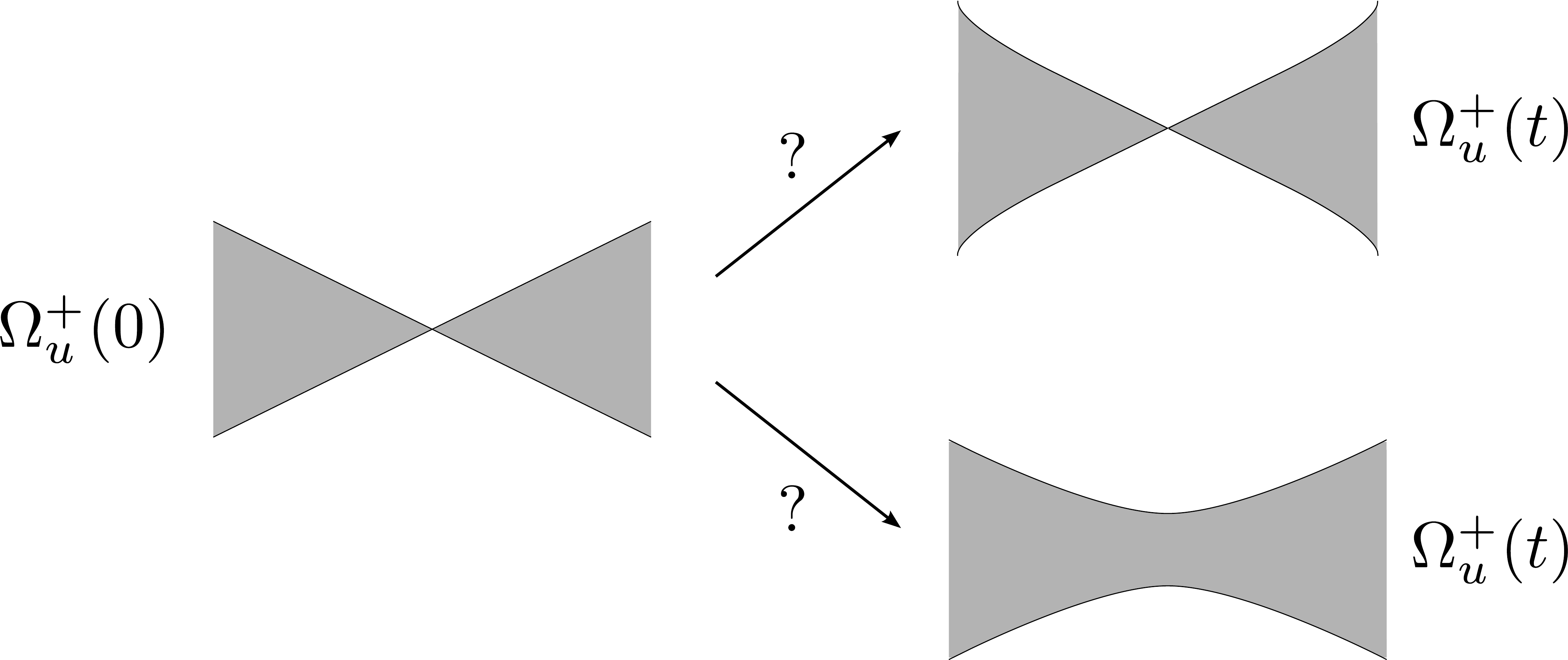}
\end{center}
\caption{}
\label{fig:non_uniqueness}
\end{figure}

Keep in mind that if we consider the separate evolution of the two supports we get that the vertex persists for a positive time. On the other hand, if we perturb the initial data by intersecting the supports in a non trivial way, then we expect that the obtuse angles now formed start to expand very fast. By taking the limit of these perturbations it is conceivable to recover a solution which merges and smooths out the vertices instantaneously.

\subsection{Hodograph}\label{sec:hodograph}

In most of the paper the domain $\W(t)$ will be equal to ball $B_r(-te_n)$ for some $r>0$ and $t\in(-T,0]$. Our main hypothesis is the closeness of a solution $u$ to the planar profile $(x_n+t)^+$. It turns out to be useful to measure such hypothesis in terms of the next construction. For the following definition $P(\R)$ is the set of all subsets of $\R$.

\begin{definition}[Hodograph]
We say that $\bar u:\bar\R^n_+\times(-\8,0] \to P(\R)$ is the hodograph transform of $u$ with respect to $\e>0$ if
\begin{align}\label{eq:hodo_def}
\bar u(x,t) := &\left\{ s\in\R: \exists \ (x_k,t_k)\in \R^n_+\times(-\8,0]\to (x,t), s_k\to s\right.\\
\nonumber &\left. \qquad\quad \text{ such that } u(x_k-(t_k+\e s_k)e_n,t_k) = (x_k)_n \right\}.
\end{align}
\end{definition}

Usually, instead of stating precisely \eqref{eq:hodo_def} we write the following more informal relation having in mind that $\bar u$ is a multi-valued function,
\[
u(x-(t+\e \bar u(x,t))e_n,t) = x_n.
\]
Figure \ref{fig:hodograph} illustrates the geometric meaning of the construction where $\bar u$ measures the horizontal distance between the graph of $u$ and the planar profile $(x_n+t)_+$ at scale $\e$. The approximating sequences in space are necessary in order to define $\bar u$ over $\R^{n-1}$ which relates with the free boundary of $u$ in the following way,
\[
\G_u(t) \ss \p\W_u^+(t) \cap \W(t) \ss \bigcup_{x'\in\R^{n-1}}\3x'-(t+\e\bar u(x',0,t))e_n\4.
\]

\subsubsection{Multi-valued functions}\label{sec:multi-valued_fn} A linear combination of multi-valued functions $v_1,v_2:\bar\R^n_+\times(-\8,0] \to P(\R)$ is a multi-valued function given by,
\[
(\a v_1 + \b v_2)(x,t) = \{\a a+\b b\in \R: a\in v_1(x,t), b\in v_2(x,t)\}.
\]
In particular, the centered difference of step size $h>0$ and in the direction of a unit vector $e$ is given by,
\begin{align*}
\d_{he} v(x,t) &:= v(x+(h/2)e,t) - v(x-(h/2)e,t).
\end{align*}

The oscillation and the $L^\8$ norm in a set $D \ss \bar\R^n_+\times(-\8,0]$ are defined as,
\begin{align*}
\osc_{D} v &:= \inf \3M \in [0,\8): \bigcup_{(x,t),(y,s)\in D} v(x,t)-v(y,s)\ss[-M,M]\4,\\
\|v\|_{L^\8(D)} &:= \inf \3M \in [0,\8):  \bigcup_{(x,t)\in D} v(x,t)\ss[-M,M]\4.
\end{align*}
We adopt the following convention whenever there exists $(x,t) \in D$ such that $v(x,t)=\emptyset$,
\begin{align*}
\osc_{D} v = \|v\|_{L^\8(D)} = \8.
\end{align*}
In other words, whenever one of our hypothesis says that one of the previous quantities is finite we are also assuming that $v(x,t)\neq \emptyset$ for any $(x,t) \in D$.

\begin{remark}
If $\bar u$ is the hodograph transform of $u$ with respect to $\e>0$, the hypothesis $\|\bar u\|_{L^\8\1Q^+_r(x_0,t_0)\2} \leq M <\8$ implies the following flatness hypothesis at each time $t\in(t_0-r,t_0] \ss (-\8,0]$ in $B_{r-\e M}(-te_n)$
\[
(x_n-(t+\e M))_+ \leq u(x,t)\leq (x_n-(t-\e M))_+.
\]
In a similar way, if the previous inequalities hold in $B_{r}(-te_n)$ then $\|\bar u\|_{L^\8\1Q^+_{r-\e M}(x_0,t_0)\2} \leq M$.
\end{remark}

Uniform convergence of a sequence $\{v_k\}_{k=1}^\8$ of (non empty) multi-valued functions towards a single-valued function $v$ gets derived from the previous construction,
\[
v_k \xrightarrow[k\to\8]{} v \qquad \text{uniformly in $D$ if}  \qquad \|v_k - v\|_{L^\8\1D\2} \xrightarrow[k\to\8]{} 0.
\]

We will define a H\"older semi-norm in terms of the following distance in $\bar\R^n_+\times(-\8,0]$
\begin{align*}
d((x,t),(y,s)) := \begin{cases}
|x-y| &\text{ if $s=t$},\\
\displaystyle\inf_{z,w\in\R^{n-1}}|x-z|+|(z,t)-(w,s)|+|w-y| &\text{ if $s\neq t$}.
\end{cases}
\end{align*}
The topology induced by this distance joins the different spatial domains $\R^n_+\times\{t\}$ across the boundary $\R^{n-1}\times(-\8,0]$. This is useful to address the fact that a function that is continuous under this topology is continuous in space up to the boundary $\R^{n-1}$ and continuous in space and time when restricted to $\R^{n-1}$.

We denote the parabolic space time balls with respect to $d$ by,
\[
B_r^d(x,t) := \{(y,s)\in\bar \R^n_+: d((x,t),(y,s)) < r\}, \qquad B_r^d:= B_r(0,0).
\]
Notice that if $x\in\R^{n-1}$ then,
\[
\bar Q_{r/\sqrt{n}}^+(x,t) \ss B_r^d(x,t) = \{(y,s)\in\bar \R^n_+: |(x,t)-(y,s)|<r, \ s\leq t\} \ss \bar Q_r^+(x,t).
\]

We define a truncated H\"older semi-norm for $D\ss\bar\R^n_+\times(-\8,0]$ with respect to $d$ by,
\begin{align*}
[v]^*_{C^\a_{trun(r)}(D)} &:= \inf \3M \in [0,\8): \bigcup_{\substack{(x,t),(y,s)\in D\\d((x,t),(y,s))>r}} \frac{v(x,t)-v(y,s)}{d((x,t),(y,s))^\a}\ss[-M,M]\4.
\end{align*}
As before, if there exists $(x,t)\in D$ such that $v(x,t) =\emptyset$, then we let $[v]^*_{C^\a_{trun(r)}(D)} = \8$.

The parameter $r$ denotes a truncation of the modulus of continuity. When $r=0$ and
\[
[v]^*_{C^\a(D)} := [v]^*_{C^\a_{trun(0)}(D)}<\8
\]
we have that $v$ is single-valued, continuous in space and continuous also in time when restricted to $D \cap \R^{n-1}$. Whenever $r=0$ and $D$ is a subset of $\R^{n-1}\times(-\8,0]$ or $\R^n\times \{t\}$, we have that our norm coincides with the standard H\"older seminorm and then we denote it by the standard notation by suppressing the star,
\[
[v]_{C^\a(D)} := [v]^*_{C^\a(D)}.
\]

\begin{remark}
Given the homogeneity of $d$, namely 
\[
d((\r x,\r t),(\r y,\r s)) = \r d((x,t),(y,s)),
\]
we get that for the rescaling $w(x,t):=v(\r x,\r t)$,
\[
[w]^*_{C^\a_{trun(\r r)}(\r D)} = \r^{\a}[v]^*_{C^\a_{trun(r)}(D)}.
\]
\end{remark}

\subsection{Integro-differential parabolic equations with bounded measurable coefficients}

After passing to the limit in our approximation arguments we find the following global problem,
\begin{alignat}{3}
\label{eq:heoo1}\D w &= 0 \qquad &&\text{ in } \qquad &&\R^n_+,\\
\label{eq:heoo2}\inf_{a\in[\l,\L]} a\p_n w &\leq \p_t w \leq \sup_{a\in[\l,\L]} a\p_n w \qquad &&\text{ in } \qquad &&\R^{n-1}\times(-\8,0].
\end{alignat}

Assuming that $w(\cdot,0,t)$ is sufficiently smooth about $x'\in\R^{n-1}$ and that for some $C>0$ and $\a\in(0,1)$,
\[
\|w(\cdot,t)\|_{L^\8(\bar B^+_R)} \leq CR^\a \text{ for every $R\geq 1$},
\]
we get the following well known fact from potential theory,
\begin{align*}
\p_n w(x',0,t) = \D^{1/2}_{\R^{n-1}}w(x',0,t) := C_n \lim_{\eta\to0}\int_{\R^{n-1}\sm B_\eta^{n-1}} \1w(x'+y',0,t)-w(x',0,t)\2\frac{dy'}{|y'|^n}
\end{align*}
where the constant $C_n>0$ is given such that we obtain the following identity for the Fourier multiplier,
\[
\widehat{\D_{\R^{n-1}}^{1/2}} = -|\xi|.
\]
This allows us to interpret \eqref{eq:heoo1} and \eqref{eq:heoo2} as the heat equation of order one with a bounded measurable diffusion.

\begin{definition}\label{def:viscosity_linear}
Let $0<\l\leq\L<\8$, $w:\R^n_+\times(-\8,0]\to\R$ continuous in space and time such that,
\begin{enumerate}[label=(\alph*)]
\item For every $t\leq 0$,
\[
\D w = 0 \text{ in } \R^n_+.
\]
\item There exists some $C>0$ and $\a\in(0,1)$ such that,
\[
\|w\|_{L^\8(\bar Q^+_R)} \leq CR^\a \text{ for every $R\geq 1$}.
\]
\end{enumerate}
Under these assumption we say that $w$ satisfies 
\[
\p_t w \leq \sup_{a\in[\l,\L]} a\p_n w \text{ in the viscosity sense in } \R^{n-1}\times(-\8,0]
\]
If whenever $\varphi \in C^\8(\bar Q_r^+(x_0',0,t_0))$ is a test function that touches $w$ from above at $(x_0',0,t_0) \in \R^{n-1}\times(-\8,0]$ we get that,
\begin{align*}
&\p_t \varphi(x_0',0,t_0) \leq \sup_{a\in[\l,\L]} a\p_n \varphi(x_0',0,t_0).
\end{align*}
\end{definition}

The inequality $\p_t w \geq \inf_{a\in[\l,\L]} a\p_n w$ in the viscosity sense is analogously defined by considering test functions touching $w$ from below, replacing the $\sup$ by the $\inf$, and changing the direction of the inequality. When restricted to $\R^{n-1}\times(-\8,0]$, viscosity solutions in the setting just described are also viscosity solutions in the sense of fully nonlinear, nonlocal parabolic equations considered in \cite{chang2014h}.

\begin{property}
Let $w$ satisfies,
\[
\p_t w \leq \sup_{a\in[\l,\L]} a\p_n w \text{ in the viscosity sense in } \R^{n-1}\times(-\8,0].
\]
Then for any $\varphi \in C^\8(Q_r^{n-1}(x_0',0,t_0))$ that touches $w(\cdot,0,\cdot)$ from above at $(x_0',0,t_0) \in \R^{n-1}\times(-\8,0]$ we get that,
\begin{align*}
&\p_t \phi(x_0',0,t_0) \leq \sup_{a\in[\l,\L]} a\D^{1/2}_{\R^{n-1}}\phi(x_0',0,t_0),
\end{align*}
where
\[
\phi(x',t) = \begin{cases}
\varphi (x',t) &\text{ if } (x',t) \in Q_r^{n-1}(x_0',t_0),\\
w(x',t) &\text{ otherwise}.
\end{cases}
\]
\end{property}

The proof consists on extending $\phi$ harmonically to $\R^n_+$ using that for $R\geq 1$, $\|\phi\|_{L^\8(Q^{n-1}_R)} \leq CR^\a$. Let $\psi$ be the unique extension which also satisfies $\|\psi\|_{L^\8(\bar Q^+_R)} \leq CR^\a$ for any $R\geq 1$. Finally we use that $\psi$ is a test function that touches $w$ from above such that $\p_n\psi(x_0',0,t_0) = \D^{1/2}_{\R^{n-1}}\phi(x_0',t_0)$.

A consequence of the Harnack inequality \cite[Corollary 6.4]{chang2014h} is the following Liouville theorem.

\begin{theorem}[Liouville's Theorem]\label{thm:liouville}
There exists $\a\in(0,1)$ sufficiently small depending on $0<\l\leq\L<\8$, such that if $w$ satisfies the assumptions of the Definition \ref{def:viscosity_linear} for such exponent $\a$ and
\[
\inf_{a\in[\l,\L]} a\p_n w \leq \p_t w \leq \sup_{a\in[\l,\L]} a\p_n w \text{ in the viscosity sense in } \R^{n-1}\times(-\8,0].
\]
Then $w$ is necessarily a constant function. 
\end{theorem}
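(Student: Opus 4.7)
The plan is to reduce to a parabolic fractional equation on $\R^{n-1}\times(-\infty,0]$, apply a Hölder estimate (a consequence of Harnack as in \cite{chang2014h}), rescale, and finally extend from the boundary to the interior using harmonicity. Concretely, the Property stated just before the theorem says that if a smooth test function $\varphi$ touches $w(\cdot,0,\cdot)$ from above at $(x_0',t_0)$, then after harmonic extension of $\phi:=(\varphi\mathds{1}_{Q_r^{n-1}}+w\mathds{1}_{\text{outside}})$ to $\R^n_+$, we have $\p_t\varphi(x_0',t_0)\leq \sup_{a\in[\l,\L]} a\,\D^{1/2}_{\R^{n-1}}\phi(x_0',t_0)$. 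The symmetric inequality holds from below. Thus the trace $v(x',t):=w(x',0,t)$ is a global viscosity solution of an extremal parabolic equation of order one with bounded measurable coefficient in $[\l,\L]$, and the growth hypothesis $\|w\|_{L^\infty(\bar Q_R^+)}\leq CR^\a$ gives the tail bound $|v(x',t)|\leq C(1+|x'|+\sqrt{|t|})^\a$, which is integrable against the kernel of $\D^{1/2}_{\R^{n-1}}$ provided $\a<1$.

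Next, the Harnack inequality \cite[Corollary 6.4]{chang2014h} for this class of operators yields an interior Hölder estimate: there is $\beta=\beta(\l,\L,n)\in(0,1)$ and $C=C(\l,\L,n)$ such that for any viscosity solution $v$ on $Q_1^{n-1}$ satisfying a tail bound of order below $1$,
\[
[v]_{C^\beta(Q_{1/2}^{n-1})}\leq C\Bigl(\|v\|_{L^\infty(Q_1^{n-1})}+\sup_{R\geq 1}R^{-1}\|v\|_{L^\infty(Q_R^{n-1})}\Bigr).
\]
Fix the exponent $\a$ in the theorem to be any number in $(0,\beta)$. For each $R\geq 1$ consider the parabolic rescaling $v_R(x',t):=R^{-\a}v(Rx',Rt)$, which solves the same class of equation (the scaling of the operator $\p_t-a(-\D)^{1/2}$ with parabolic ratio $x\sim t$ preserves the class of bounded measurable coefficients in $[\l,\L]$). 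The growth assumption translates to $\|v_R\|_{L^\infty(Q_r^{n-1})}\leq Cr^\a$ for every $r\geq 1$, uniformly in $R$. Applying the Hölder estimate to $v_R$ on $Q_1^{n-1}$ and undoing the rescaling gives
\[
[v]_{C^\beta\1Q_{R/2}^{n-1}\2}\leq C\, R^{\a-\beta}.
\]
Letting $R\to\infty$ and using $\a<\beta$ shows that any two points of $\R^{n-1}\times(-\infty,0]$ take the same value under $v$, i.e.\ $v\equiv c$ for some constant $c$.

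Finally, extend the conclusion from the boundary to $\R^n_+$. For each fixed $t\in(-\infty,0]$, the function $w(\cdot,t)$ is harmonic on $\R^n_+$ with boundary value $c$ and sublinear growth $|w(x,t)|\leq C(1+|x|)^\a$ with $\a<1$. Subtracting $c$ and odd-reflecting across $\R^{n-1}$ produces a harmonic function $W$ on all of $\R^n$ with the same growth and vanishing on $\R^{n-1}$; by the classical Liouville theorem $W$ must be a harmonic polynomial of degree $\leq \lfloor\a\rfloor=0$, hence identically zero. Therefore $w(\cdot,t)\equiv c$ for every $t$, proving $w$ is constant. The main obstacle in carrying this out carefully is to verify that the Hölder/Harnack estimate of \cite{chang2014h} really does apply to viscosity solutions in the sense of Definition~\ref{def:viscosity_linear}, with the tail precisely controlled by the $\a$-growth, and that the equation class is invariant under the parabolic dilation $(x,t)\mapsto(Rx,Rt)$ of order one; both points are resolved by the observation in Step~1 together with the absorption of the kernel-weighted tail into the right-hand side of the oscillation-decay estimate.
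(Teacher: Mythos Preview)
Your proposal is correct and follows exactly the route the paper indicates: the paper does not give its own proof of Theorem~\ref{thm:liouville} but simply records it as ``a consequence of the Harnack inequality \cite[Corollary 6.4]{chang2014h}'', and your argument is the standard way to extract a Liouville theorem from such a H\"older/Harnack estimate by rescaling. One minor slip: in your tail bound you wrote $\sqrt{|t|}$, but for an order-one equation the parabolic cylinders are $Q_R^{n-1}=B_R^{n-1}\times(-R,0]$ (space and time scale the same), so the growth should read $C(1+|x'|+|t|)^\a$; since your actual rescaling $v_R(x',t)=R^{-\a}v(Rx',Rt)$ is the correct one, this does not affect the argument.
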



\section{Harnack Inequality}\label{sec:harnack}

In this section we show that if a solution $u$ is sufficiently flat then the oscillation of $\bar u$ decreases in a smaller domain. Our main Theorem then says that we obtain a truncated H\"older modulus of continuity for $\bar u$ with respect to the distance $d$ introduced in Section \ref{sec:prelim}.

\begin{theorem}\label{thm:trun_holder}
Let $u(\cdot,t) \in C(B_1(-te_n)\to[0,\8))$ be a viscosity solution of the Hele-Shaw problem and $\bar u$ the hodograph transform of $u$ with respect to $\e>0$. There exists a H\"older exponent $\a\in(0,1)$, $\e_0\in(0,1)$, and $C>0$ such that,
\begin{align*}
\e \in(0,\e_0) \qquad\text{and}\qquad \|\bar u\|_{L^\8\1\bar Q^+_{7/8}\2}\leq 1 \qquad\Rightarrow\qquad [\bar u]^*_{C^\a_{trun(C\e)}\1B^d_{1/4}\2}\leq C.
\end{align*}
\end{theorem}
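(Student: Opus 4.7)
The plan is to derive the estimate by iterating a single-scale oscillation decay for $\bar u$ under parabolic rescaling, stopping the iteration at the first scale at which the rescaled flatness parameter exceeds the universal threshold $\e_0$. Since the iteration scheme inflates the flatness by a controlled factor per step, the termination scale is of order $\e$, matching the truncation in the statement.

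\textbf{Step 1 (Oscillation decay at a single scale).} I would first prove the following: there exist universal $\e_0,\theta\in(0,1)$ and $\rho\in(0,1)$ with $\rho<1-\theta$ such that if $\e\in(0,\e_0)$ and $\|\bar u\|_{L^\infty(\bar Q^+_{7/8})}\leq 1$, then $\osc_{\bar Q^+_\rho}\bar u\leq 2(1-\theta)$. By the remark following the subsubsection on multi-valued functions, the hypothesis is equivalent to the two-sided flatness
\begin{equation*}
(x_n+t-\e)_+\,\leq\,u(x,t)\,\leq\,(x_n+t+\e)_+\qquad\text{on }B_{7/8}(-te_n)\times(-7/8,0],
\end{equation*}
and the goal is to detach $u$ from either the lower or upper flat barrier by a definite amount $\theta\e$ inside $Q^+_\rho$. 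Following De Silva's argument in \cite{MR2813524} for the stationary problem, and borrowing the time-accumulated point estimate from Silvestre \cite{MR2737806}, I would pick a reference point $P=(P_x,P_t)\in Q^+_\rho$ and apply the dichotomy: either $u(P)$ is closer to the upper barrier or to the lower. In the first case one constructs a harmonic corrector $\psi$ on $\bar B^+_{7/8}\setminus B_\delta(P_x)$ with $\psi=0$ on the outer portion of $\{x_n=0\}$, $\psi=1$ near $P_x$, and $\partial_n\psi\geq c_0>0$ on $B^{n-1}_{3/4}$, and builds the comparison subsolution
\begin{equation*}
\varphi(x,t):=\bigl(x_n+t-\e+\theta\e\psi(x)-K\theta\e^2(t-P_t)\bigr)_+
\end{equation*}
for a universal constant $K$. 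The term $\theta\e\psi$ lifts the lower flat profile towards $u$, while $K\theta\e^2(t-P_t)$ absorbs the $O(\theta\e^2)$ defect in the free boundary speed generated by $|D\psi|^2$. An application of Property \ref{pro:comparison} in the region $Q^+_{7/8}\setminus(B_\delta(P_x)\times(-7/8,0])$ -- with $\varphi\leq u$ on the inner cylinder coming from a boundary Harnack estimate for the positive harmonic function $u$, fed by the hypothesis at $P$ -- gives $\varphi\leq u$ throughout, hence $\bar u\leq 1-\theta$ on $\bar Q^+_\rho$. The opposite case is symmetric.

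\textbf{Step 2 (Iteration and rescaling).} Hele-Shaw is invariant under $(x,t,u)\mapsto(\rho x,\rho t,\rho^{-1}u)$, and the defining identity of the hodograph shows that $\bar u(\rho\cdot,\rho\cdot)$ is the hodograph of the rescaled solution with respect to $\e/\rho$; after shifting by the midpoint of the oscillation range (a time translation on the $u$ side) and dividing by $(1-\theta)$, the flatness inflates further to $(1-\theta)\e/\rho$. Because $\rho<1-\theta$, the per-step inflation factor $(1-\theta)/\rho<1/\rho$ is strictly less than the spatial contraction factor, and iterating Step 1 $k$ times is admissible as long as $\e\bigl((1-\theta)/\rho\bigr)^k\leq\e_0$. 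This permits roughly $k_*\sim\log(\e_0/\e)/\log\bigl((1-\theta)/\rho\bigr)$ steps, descending to the dyadic scale $\rho^{k_*}\sim(\e/\e_0)^{\gamma}$ for some universal $\gamma>1$, which is comfortably below $\e$. A Campanato-type interpolation between the discrete scales $\rho^k$ now yields $\osc_{\bar Q^+_r}\bar u\leq Cr^\alpha$ for every $r\in[C\e,1/2]$ with $\alpha=\log(1-\theta)/\log\rho\in(0,1)$. Translating this oscillation bound into the truncated H\"older norm with respect to the distance $d$ is immediate from its definition, since $d$ reduces to the Euclidean parabolic distance inside any single spatial slice or on $\R^{n-1}$, and $d((x,t),(y,s))\geq|x-y|$ always; this produces the stated bound $[\bar u]^*_{C^\alpha_{\mathrm{trun}(C\e)}(B^d_{1/4})}\leq C$.

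\textbf{Main obstacle.} The delicate ingredient is Step 1, and the delicacy is exactly that of adapting a stationary Harnack-type barrier argument to a parabolic free boundary problem. The barrier $\varphi$ must satisfy the Hele-Shaw free boundary condition uniformly across the whole time interval $(-7/8,0]$, not only at a single instant, and the $O(\theta\e^2)$ defect in $|D\varphi|$ generated by the corrector $\psi$ can accumulate over a parabolic time window to a size that would overwhelm the $\theta\e$ improvement one wants to extract. The time-dependent counter-shift $K\theta\e^2(t-P_t)$ is calibrated precisely to neutralize this defect, and the role of $\e_0$ being small is to ensure the total perturbation $\theta\e\psi - K\theta\e^2(t-P_t)$ remains a small correction of $\theta\e\psi$ throughout the parabolic cylinder, rather than swamping it. A secondary technical point is verifying $\varphi\leq u$ on the inner boundary $\partial B_\delta(P_x)\times(-7/8,0]$; here one uses a boundary Harnack-type estimate for the positive harmonic function $u$ -- available because flatness confines $\Gamma_u$ to a thin strip -- to propagate the single pointwise hypothesis at $P$ into the comparison needed on a full parabolic neighborhood.
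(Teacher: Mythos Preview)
Your iteration in Step 2 is essentially the paper's argument, including the inflation factor $(1-\theta)/\rho$ for the flatness and the exponent $\alpha=\log(1-\theta)/\log\rho$. One minor omission: the passage from the half-cylinder oscillation bound to the $d$-H\"older seminorm is not a tautology, since for pairs of points at different times with large $x_n$ the metric $d$ routes through $\R^{n-1}$; the paper handles this by combining the oscillation decay on cylinders centered on $\R^{n-1}$ (Corollary~\ref{cor:trun_holder}) with the interior gradient estimate for the harmonic $u(\cdot,t)$, which yields $|\bar u(x,t)-\bar u(x',0,t)|\leq Cx_n^\alpha$ and closes a short case analysis.

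The real gap is in Step 1. Your dichotomy is posed at a single space-time point $P=(P_x,P_t)$, and you claim a boundary Harnack estimate ``propagates the single pointwise hypothesis at $P$'' to the full lateral cylinder $\partial B_\delta(P_x)\times(-7/8,0]$. But Harnack for the harmonic function $u(\cdot,t)$ is a spatial estimate at a fixed time: it transfers the hypothesis at $P$ to $\partial B_\delta(P_x)\times\{P_t\}$, and says nothing about $t\neq P_t$. Since Property~\ref{pro:comparison} requires $\varphi\leq u$ on $\partial B_\delta(P_x)$ for \emph{every} $t$, and there is no monotonicity of $u$ in time to appeal to, your barrier cannot be ordered against $u$ on the inner cylinder and the argument stalls. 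The paper's Lemma~\ref{lem:point_est} fixes exactly this by replacing the single-point dichotomy with the time-density dichotomy you cite from Silvestre but do not actually implement: one splits on whether $|\{t\in(-3/4,-1/2]:u((1/4-t)e_n,t)\geq 1/4\}|\geq 1/8$. The barrier's inner boundary datum is then the time-dependent quantity $u((1/4-t)e_n,t)-(1/4-t-\e)$, so spatial Harnack gives the needed ordering on the inner sphere at every $t$ separately, and the free-boundary bulge $r(t)$ is governed by the ODE $r'+Cr=c\e f(t)$, $r(-3/4)=0$, with $f$ the indicator of the good time set. The improvement starts at zero and is accumulated through the ODE; the density hypothesis then forces $r(t)\geq 4\theta\e$ on $[-1/2,0]$.
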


The main step to prove the previous Theorem relies on the following Harnack type estimate. The strategy is based on the Harnack inequality from \cite{MR2813524} and the Point Estimate for parabolic nonlocal equations in \cite{MR2737806}. Heuristically speaking, the speed of the interphase measured by $|Du|$ follows the behavior of $u$ at some point in $\W_u^+(t)$. This information can be integrated in time in order to see from which side the oscillation of the free boundary diminishes.

\begin{lemma}\label{lem:point_est}
Let $u(\cdot,t) \in C(B_1(-te_n)\to[0,\8))$ be a viscosity solution of the Hele-Shaw problem in the time interval $(-1,0]$ such that the following flatness hypothesis holds in $B_{3/4}(-te_n)$ for some $\e>0$, at each time $t\in(-3/4,0]$,
\[
(x_n+ t-\e)_+ \leq u(x,t) \leq (x_n+t+\e)_+.
\]
There exists $\e_0,\theta,\mu\in(0,1)$ such that if $\e\in(0,\e_0)$ then at least one of the following two holds in $\bigcup_{t\in(-\m,0]} B_\m(-te_n)$,
\begin{align*}
 u(x,t) \geq (x_n+t- (1-\theta)\e)_+, \qquad \text{or}\qquad
u(x,t)\leq (x_n+t+(1-\theta)\e)_+.
\end{align*}
\end{lemma}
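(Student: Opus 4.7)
The strategy combines the barrier-based Harnack dichotomy that De Silva used in \cite{MR2813524} for the one-phase stationary problem with a time-propagation step in the spirit of Silvestre's point estimate for parabolic nonlocal equations \cite{MR2737806}. Fix a reference time $t_0 = -1/2$ and an interior point $x_0 := (3/4)e_n - t_0 e_n$; the flatness assumption forces $u(x_0, t_0) \in [1/4-\varepsilon, 1/4+\varepsilon]$, so either (i) $u(x_0, t_0) \geq 1/4$ or (ii) $u(x_0, t_0) \leq 1/4$. The two cases are symmetric (swap the roles of the upper and lower planar profiles), so I focus on case (i). Introduce the nonnegative function
\[
v(x, t) := u(x, t) - (x_n + t - \varepsilon)_+,
\]
which is harmonic at each fixed time in $\{x_n + t > \varepsilon\} \cap \Omega_u^+$. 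Since $v(x_0, t_0) \geq \varepsilon$ and a ball $B_r(x_0)$ of universal radius $r$ lies inside the common positivity set at time $t_0$ for small $\varepsilon$, the classical Harnack inequality for nonnegative harmonic functions yields a universal $c_0 > 0$ with $v(\cdot, t_0) \geq c_0 \varepsilon$ on $B_{r/2}(x_0)$.

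To propagate this pointwise information forward in time I slide a comparison subsolution of the form $\varphi(x, t) := (x_n + t - \varepsilon + c_1 \varepsilon \phi(x))_+$ underneath $u$, where $c_1 > 0$ is a small universal constant and $\phi$ is a truncated Newtonian-type function, for instance $\phi(x) := (|x - p|^{-\gamma} - R^{-\gamma})_+$ with pole $p$ placed in $\{x_n < 0\}$ outside the relevant cylinder and exponent $\gamma > n - 2$ so that $\phi$ is subharmonic. The free parameters $p, R, \gamma, \mu$ are tuned so that $\phi \geq \delta > 0$ (with $\delta$ universal) on the target set $\bigcup_{t \in (-\mu, 0]} B_\mu(-t e_n)$; $\phi \leq c_0/c_1$ on $B_{r/2}(x_0)$; $\phi$ vanishes on the lateral boundary of the comparison domain; and $\partial_n \phi$ is bounded below by a universal positive constant on a neighborhood of $\Gamma_\varphi$. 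A routine computation then gives $\Delta \varphi = c_1 \varepsilon \Delta \phi \geq 0$ in the positivity set, while on $\Gamma_\varphi$ the free boundary inequality $\partial_t \varphi / |D\varphi| \leq |D\varphi|$ reduces to $c_1 \varepsilon \partial_n \phi \geq O(\varepsilon^2)$, which holds for all $\varepsilon < \varepsilon_0$ universal. Hence $\varphi$ is a classical comparison subsolution.

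The Harnack estimate on $v$ together with $\phi \leq 0$ outside its support ensure $\varphi(\cdot, t_0) \leq u(\cdot, t_0)$, while the vanishing of $\phi$ on the lateral boundary, combined with the lower flatness bound, provides the lateral comparison. Property \ref{pro:comparison} then upgrades this to $\varphi(x, t) \leq u(x, t)$ throughout $[t_0, 0]$. Since $\phi \geq \delta$ on the target set, this reads
\[
u(x, t) \geq \bigl(x_n + t - (1 - c_1 \delta)\varepsilon\bigr)_+ \quad \text{on } \bigcup_{t \in (-\mu, 0]} B_\mu(-t e_n),
\]
giving the first alternative with $\theta := c_1 \delta$; case (ii) yields the second alternative via the analogous argument applied to $\tilde v := (x_n + t + \varepsilon)_+ - u$ together with a sliding supersolution obtained by reversing signs in the construction above.

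The main technical obstacle is the construction of $\phi$ satisfying the competing demands: subharmonic with strictly positive normal derivative on a neighborhood of the approximate free boundary $\{x_n = -t + \varepsilon\}$, large on the relatively wide target set, yet bounded above on the small initial Harnack ball and vanishing on the lateral boundary so that the comparison propagates unimpeded. Balancing these requirements via the choice of pole $p$, exponent $\gamma$, and radii $R, \mu$ is where the technical weight of the argument lies.
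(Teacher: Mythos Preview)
Your proposal has a genuine gap at the step ``the Harnack estimate on $v$ together with $\phi\leq 0$ outside its support ensure $\varphi(\cdot,t_0)\leq u(\cdot,t_0)$''. The Harnack bound $v\geq c_0\varepsilon$ is available only on compact subsets of $\{x_n+t_0>\varepsilon\}$; it degenerates as you approach $\{x_n=-t_0+\varepsilon\}$. But the flatness hypothesis allows $\Gamma_u(t_0)$ to sit exactly at the extremal position $\{x_n=-t_0+\varepsilon\}$, so that $u(\cdot,t_0)=0$ on the whole strip $\{x_n<-t_0+\varepsilon\}$. If $\phi>0$ anywhere near that height---and it must be, since you require $\phi\geq\delta$ on the target set and, in any case, the barrier's own free boundary at later times sweeps precisely through that region---then $\varphi(\cdot,t_0)>0=u(\cdot,t_0)$ there, and the initial ordering fails. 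A single-time Harnack snapshot cannot manufacture an $O(\varepsilon)$ displacement of $\Gamma_u$; the free boundary condition $\partial_t u/|Du|=|Du|$ only tells you the \emph{speed} is perturbed by $O(\varepsilon)$, so any improvement in position must be accumulated over a time interval of order one. (There is also a sign inconsistency: with the pole of $\phi$ in $\{x_n<0\}$ one gets $\partial_n\phi<0$ for $x_n>p_n$, which is the wrong sign for the subsolution inequality you wrote; but this is secondary to the structural issue.)

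This is exactly why the paper's proof is organized differently. Rather than a single-time dichotomy, it uses a density-in-time hypothesis on the interval $(-3/4,-1/2]$: either $u((1/4-t)e_n,t)\geq 1/4$ for at least half of those times, or the reverse. The barrier is not a fixed bump but a family whose free boundary bulges out by a time-dependent amount $r(t)$, initialized at $r(-3/4)=0$ so that the starting comparison is simply the flatness hypothesis. The subsolution condition on the moving free boundary becomes the differential inequality $r'+Cr\leq c\varepsilon f(t)$, where $f$ is the indicator of the ``good'' event at time $t$; integrating this ODE and invoking the density hypothesis yields $r(t)\geq 4\theta\varepsilon$ for $t\in[-1/2,0]$, which is the desired displacement of the free boundary. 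A second barrier then transfers this to the value of $u$. The Silvestre-type input you cite is precisely this ODE/time-integration mechanism, not a static bump function.
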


\begin{proof}
Either,
\begin{align}\label{eq:density_hyp}
\frac{\left|\{t\in(-3/4,-1/2]: u((1/4-t)e_n,t)\geq 1/4\}\right|}{|(-3/4,-1/2]|} \geq 1/2,
\end{align}
or the opposite inequality is true. The treatment of these cases is very similar so we will just focus on the one stated above. Here our goal is to get the improvement of the oscillation from below,
\begin{align*}
u(x,t) \geq (x_n+t- (1-\theta)\e)_+.
\end{align*}
In order to do this it suffices to construct a comparison subsolution that can be used as a barrier.

Let $r:C^1([-3/4,0]\to[0,1))$ to be defined and set
\begin{align*}
p(t) := \frac{1}{8r(t)}-\frac{r(t)}{2},\qquad\text{ and }\qquad R(t) := \frac{1}{8r(t)}+\frac{r(t)}{2},
\end{align*}
such that $\p B_{R(t)}((p(t)-t+\e)e_n)$ is the unique sphere that contains the point $(\e-t-r(t))e_n$ and the $(n-2)$ dimensional sphere $\p B_{1/2}^{n-1}+(\e-t)e_n$. Consider the domain,
\begin{align*}
\W(t) := (B_R((p(t)-t+\e)e_n)\cap\{x_n \leq \e-t\})\cup(B_{3/4}(-te_n)\cap\{x_n> \e-t\}).
\end{align*}
In case $r(t)=0$, then $p(t)=R(t)=\8$, and $\W(t)=B_{3/4}(-te_n)\cap\{x_n> \e-t\}$.

\begin{figure}[t]
\begin{center}
\includegraphics[width=12cm]{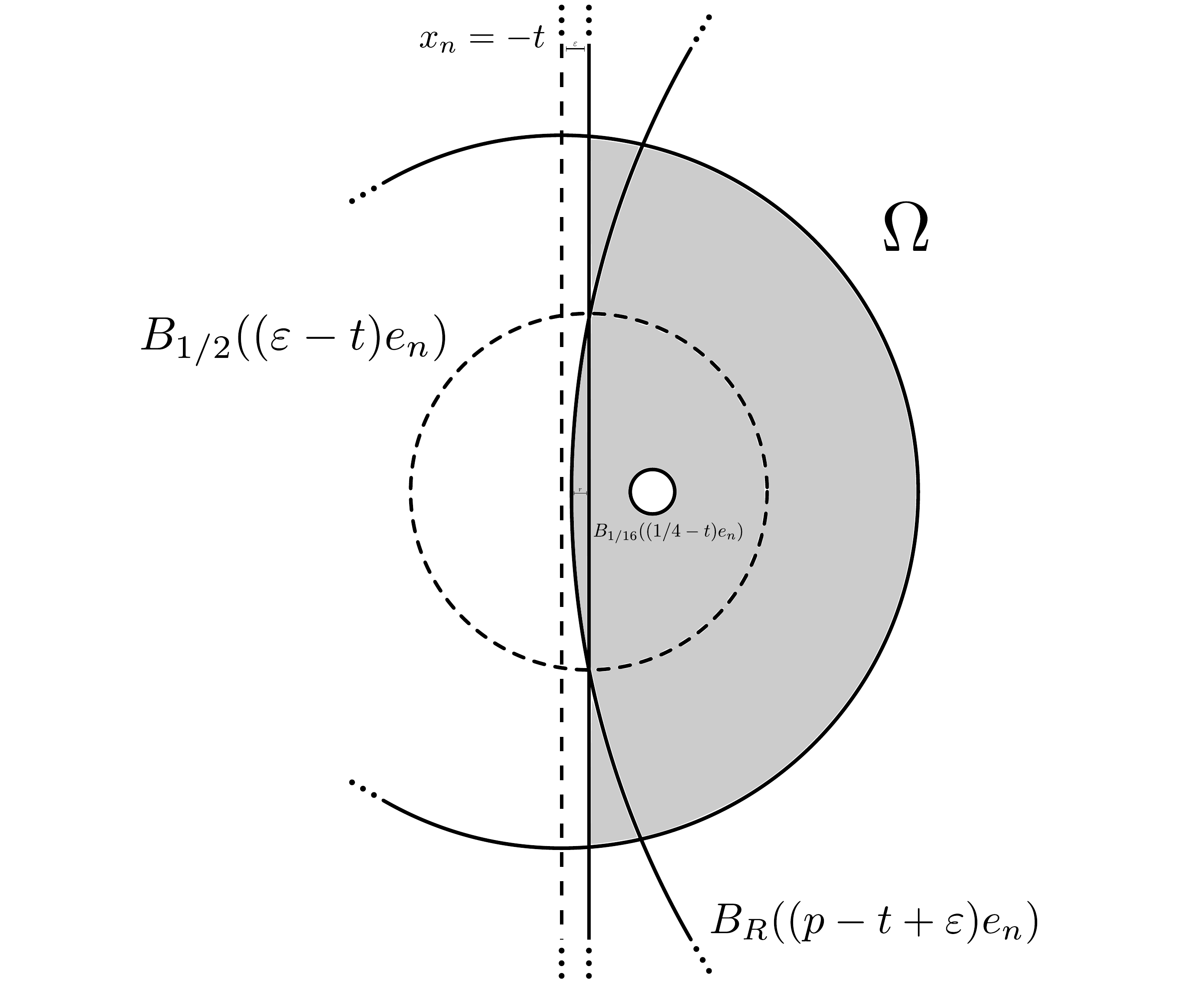}
\end{center}
\caption{Domain for the comparison subsolution $U+cV$ at a given time.}
\end{figure}

Let $U=U(\cdot,t),V=V(\cdot,t)$ such that,
\begin{alignat*}{2}
\D U &= \D V = 0 &&\text{ in } \W(t) \sm B_{1/16}((1/4-t)e_n),\\
U &= (x_n+t-\e)_+ &&\text{ on $\p(\W(t) \sm B_{1/16}((1/4-t)e_n))$},\\
V &= 0 &&\text{ on } \p \W(t),\\
V &= u((1/4-t)e_n)-(1/4-t-\e) &&\text{ on } \p B_{1/16}((1/4-t)e_n).
\end{alignat*}

By Harnack's inequality there exists a universal constant $c\in(0,1)$ such that,
\begin{align*}
u \geq U + cV \text{ in } \p B_{1/16}((1/4-t)e_n).
\end{align*}
In order for $(U + cV)$ to be a comparison subsolution in $\{x_n<\e-t\}$, it is sufficient to have that for every $x \in \p \W \cap\{x_n<\e-t\}$,
\begin{align}\label{sub}
1 + \p_t r \leq |DU| + c|DV|.
\end{align}

By Lemma \ref{lem:appendix} we get that there exist constants, $r_0\in(0,1)$ and $C>0$, depending only on the dimension, such that $r\in(0,r_0)$ implies that for every $x \in \p \W \cap\{x_n<\e-t\}$,
\begin{align*}
|DU| \geq 1 - Cr.
\end{align*}

On the other hand, by Hopf's Lemma and Harnack's inequality, we get that for every $t \in (-1/8,0]$ and $x \in \p \W \cap\{x_n<\e-t\}$,
\begin{align*}
&|DV| \geq c\1u((1/4-t)e_n)-(1/4-t-\e)\2 \geq c\e f(t),\\
&f(t):= \begin{cases}
1 &\text{ if $u((1/4-t)e_n,t)\geq 1/4$},\\
0 &\text{ otherwise}.
\end{cases}
\end{align*}

Now we fix $r(t)$ as the solution of the following initial value problem for which the forcing term records the density hypothesis \eqref{eq:density_hyp},
\begin{align*}
\begin{cases}
r' + Cr = c\e f(t),\\
r(-3/4) = 0.
\end{cases}
\end{align*}
This implies that \eqref{sub} gets satisfied provided that $r$ does not grow above $r_0$.

Integrating the differential equation,
\begin{align*}
r(t) = c\e \int_{-3/4}^t f(s)e^{-C(t-s)}ds \leq c\e,
\end{align*}
therefore we get that $r \in [0,r_0)$ if $\e \in(0,\e_0)$ is sufficiently small.

Using the density hypothesis \eqref{eq:density_hyp} we get that $r(t)\geq 4\theta\e$ for every $t\in[-1/2,0]$ and some $\theta\in(0,1)$ sufficiently small. This already gives us flatness for the interphase. Consider now $\widetilde \W(t)$ constructed as before with $r=4\theta\e$ fixed and $W=W(\cdot,t)$ such that,
\begin{alignat*}{2}
\D W &= 0 &&\text{ in } \widetilde\W(t) \sm B_{1/16}((1/4-t)e_n),\\
W &= (x_n+t-\e)_+ &&\text{ on $\p(\widetilde\W(t) \sm B_{1/16}((1/4-t)e_n))$}.
\end{alignat*}
Then, for every $t\in[-1/2,0]$, $u\geq U \geq W$. Using once again Lemma \ref{lem:appendix} we get that in $B_{1/4}(-te_n)$,
\[
W(x,t) \geq \int^0_{-(x_n+t-(1-2\theta)\e)_+} \p_n W(x+se_n,t)ds \geq (1-C\e)(x_n+t-(1-2\theta)\e)_+.
\]
This implies that if we take $\m$ as a sufficiently small multiple of $\theta$ then we recover from the inequality above the desired estimate in $B_\m(-te_n)$ for every $t\in(-\m,0]$,
\[
u \geq W\geq (1-C\e)(x_n+t-(1-2\theta)\e)_+ \geq (x_n+t-(1-\theta)\e)_+.
\]
\end{proof}

In terms of the hodograph $\bar u$, the previous Lemma says that at least one of the following two hold in $\bar Q^+_{\m-\e}$,
\[
-(1-\theta)\leq \bar u \qquad\text{or} \qquad \bar u \leq (1-\theta).
\]
Take $\e_0$ even smaller if necessary such that $\m-\e\geq\bar\m:=\m/2$. To iterate this decay of oscillation we consider the rescaling,
\begin{alignat*}{3}
v(x,t) &:= \frac{u(\bar\m x\pm(\e\theta/2)e_n, \bar\m t)}{\bar\m},\qquad &&\bar v(x,t) := \frac{\bar u(\bar\m x, \bar\m t)\pm\theta/2}{1-\theta}.
\end{alignat*}
The plus sign is chosen if $u \leq (1-\theta)$ in $\bar Q^+_{\bar\m}$ and the minus sign is chosen otherwise. We get that $v$ is still a solution of Hele-Shaw and $\bar v$ is the corresponding hodograph transform with respect to $\zeta:= (1-\theta)\e/\bar\m$. Therefore, the decay holds at scale $\bar\m$ if $\e\in(0,\bar\m\e_0)$. In general, the diminish of oscillation can be iterated $N$ times if $\e\in(0,\bar\m^N\e_0)$. The following Corollary then follows from this observation and a standard covering argument.

\begin{corollary}\label{cor:trun_holder}
Under the hypothesis of Theorem \ref{thm:trun_holder} and letting $\a = \ln(1-\theta)/\ln\bar\m$ there exists $\e_0\in(0,1)$ and $C>0$ such that,
\begin{align*}
\e \in(0,\e_0) \qquad\text{and}\qquad \|\bar u\|_{L^\8\1\bar Q^+_{7/8}\2}\leq 1 \qquad\Rightarrow\qquad \sup_{\substack{(x,t)\in Q^{n-1}_{1/4}\\\r\in(C\e,1/2)}} \r^{-a} \osc_{\bar Q_\r^+(x,t)} \bar u \leq C.
\end{align*}
\end{corollary}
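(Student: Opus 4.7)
The plan is to iterate the single-scale oscillation decay provided by Lemma~\ref{lem:point_est} in order to obtain a geometric decay at the dyadic scales $\bar\mu^k$, and then to pass from the origin to an arbitrary base point $(x_0,t_0)\in Q^{n-1}_{1/4}$ by translation invariance of the Hele-Shaw problem and of the hodograph construction. Concretely, I would first prove by induction on $k$ the dyadic oscillation estimate
\[
\osc_{\bar Q^+_{\bar\mu^k}}\bar u \leq 2(1-\theta)^k,
\]
valid for every $k\geq 0$ such that $\e<\bar\mu^k\e_0$. The base case $k=0$ is just the hypothesis $\|\bar u\|_{L^\8(\bar Q^+_{7/8})}\leq 1$. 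For the inductive step, Lemma~\ref{lem:point_est}---applicable once $\e_0$ is small enough that the $L^\8$-bound on $\bar Q^+_{7/8}$ implies the flatness hypothesis on $B_{3/4}(-te_n)$ for $t\in(-3/4,0]$---yields a one-sided improvement of $\bar u$ on $\bar Q^+_{\bar\mu}$. The rescaling already presented in the excerpt, $\bar v(x,t)=(\bar u(\bar\mu x,\bar\mu t)\pm\theta/2)/(1-\theta)$, gives a new Hele-Shaw hodograph with $\|\bar v\|_{L^\8(\bar Q^+_1)}\leq 1$ and flatness $\zeta=(1-\theta)\e/\bar\mu$, so applying the induction hypothesis at level $k$ to $\bar v$ propagates the decay to level $k+1$ for $\bar u$, provided $\zeta<\bar\mu^k\e_0$; after reabsorbing constants, this reduces to $\e<\bar\mu^{k+1}\e_0$.

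Next I would convert the dyadic estimate into a H\"older modulus at the origin. Take $C>0$ to be a fixed multiple of $\bar\mu^{-1}\e_0^{-1}$. For $\rho\in(C\e,\bar\mu]$, pick the unique integer $k\geq 0$ with $\bar\mu^{k+1}<\rho\leq\bar\mu^k$; the choice of $C$ forces $\e<\bar\mu^k\e_0$, so the induction applies and yields
\[
\osc_{\bar Q^+_\rho}\bar u \leq \osc_{\bar Q^+_{\bar\mu^k}}\bar u \leq 2(1-\theta)^k = 2(\bar\mu^k)^{\alpha} \leq 2\bar\mu^{-\alpha}\rho^{\alpha},
\]
with $\alpha=\ln(1-\theta)/\ln\bar\mu\in(0,1)$ (the inclusion in $(0,1)$ holding once $\bar\mu<1-\theta$, which can be arranged by taking $\bar\mu$ small). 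The complementary range $\rho\in(\bar\mu,1/2)$ is handled by the trivial bound $\osc\leq 2\leq 2\bar\mu^{-\alpha}\rho^{\alpha}$.

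Finally, to transfer the estimate to a general base point $(x_0,t_0)\in Q^{n-1}_{1/4}$, I would use that the Hele-Shaw equation and the hodograph transform are invariant under translation in $x'\in\R^{n-1}$ and in time. Since $(x_0,t_0)\in B^{n-1}_{1/4}\times(-1/4,0]$, one has $\bar Q^+_{1/2}(x_0,t_0)\subset\bar Q^+_{7/8}$, so the translated hodograph $(y,s)\mapsto\bar u(y+x_0,s+t_0)$ satisfies an $L^\8$-bound by $1$ on $\bar Q^+_{1/2}$; a fixed constant-factor parabolic rescaling brings this back to the $\bar Q^+_{7/8}$ setting of the hypothesis, with a correspondingly rescaled flatness parameter, and the preceding two paragraphs apply and produce the desired estimate centered at $(x_0,t_0)$. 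I expect the main obstacle throughout to be the careful bookkeeping of how $\e_0$, $\bar\mu$, $\theta$, and the threshold $C\e$ interact through the rescaling and translation steps, to ensure all constants remain universal; but no new idea beyond the one-step oscillation improvement of Lemma~\ref{lem:point_est} is required.
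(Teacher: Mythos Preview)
Your proposal is correct and follows essentially the same approach as the paper: the paper also iterates Lemma~\ref{lem:point_est} via the rescaling $\bar v(x,t)=(\bar u(\bar\mu x,\bar\mu t)\pm\theta/2)/(1-\theta)$ to obtain the dyadic decay (valid as long as $\e<\bar\mu^N\e_0$), and then invokes a ``standard covering argument'' for arbitrary base points, which is precisely your translation-and-rescale step. One small remark on that last step: when shifting the time base point by $t_0$ you should also shift in the $e_n$ direction (i.e.\ consider $\tilde u(y,t)=u(y-t_0e_n,t+t_0)$) so that the hodograph is taken against the same reference profile $(x_n+t)_+$; this is implicit in the paper's convention that the spatial domain is $B_r(-te_n)$ and does not affect your argument.
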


By combining the previous corollary with the estimates for harmonic functions up to the boundary we get to establish the proof of Theorem \ref{thm:trun_holder}.

\begin{proof}[Proof of Theorem \ref{thm:trun_holder}]
Let $(x,t) = (x',x_n,t)\in B^+_{1/4}\cap\{x_n>4\e\}$ and $\r=x_n/4$. Given that $u(\cdot,t)$ is harmonic in $B_{3\r}(x-te_n)$ we get by the interior gradient estimate for $u(\cdot,t) - (x_n+t)^+$ that
\begin{align*}
\r\|Du(\cdot,t)-e_n\|_{L^\8\1B_{2\r}(x-te_n)\2} &\leq C\osc_{B_{3\r}(x-te_n)} (u-x_n),\\
&\leq C\e \osc_{\bar Q_{8\r}^+(x',0,t)} \bar u,\\
&\leq C\e\r^\a.
\end{align*}
Therefore, for $\e$ sufficiently small, $u$ is increasing in the $e_n$ direction, $\bar u(\cdot,t)$ is single valued in $B_\r(x) \supseteq B_{2\r-\e}(x)$, and by applying implicit differentiation to the relation,
\[
u(x-(t+\e\bar u(x,t))e_n,t) = x_n
\]
we obtain that,
\begin{align}\label{eq:interior_gradient}
\r\|D\bar u(\cdot,t)\|_{L^\8\1B_\r(x)\2} = \r\left\|\frac{1}{\e\p_n u}(Du-e_n)\right\|_{L^\8\1B_\r(x)\2} \leq C\r^\a.
\end{align}

Consider now two points $(x,t) = (x',x_n,t),(y,s)=(y',y_n,s) \in B^d_{1/4}$ such that $y_n\leq x_n$. In the following computations $\bar u(x,t)$, $\bar u(y,s)$, $\bar u(x',0,t)$ and $\bar u(y',0,s)$ denote arbitrary elements of the corresponding sets.

\textbf{Case I:} $4\e\geq x_n\geq y_n$. By Corollary \ref{cor:trun_holder},
\begin{align*}
|\bar u(y,s) - \bar u(x,t)| &\leq |\bar u(y,s) - \bar u(y',0,s)| +|\bar u(y',0,s) - \bar u(x',0,t)| +|\bar u(x',0,t) - \bar u(x,t)|,\\
&\leq C\1\e^\a+|(x',0,t)-(y',0,s)|^\a\2,\\
&\leq C\max\1\e,d((x,t),(y,s))\2^\a.
\end{align*}

\textbf{Case II:} $x_n>4\e$. Let $\r=x_n/4$. If $y\in B_\r^d(x)$ then $t=s$ and from \eqref{eq:interior_gradient}
\begin{align*}
\frac{|\bar u(y,s) - \bar u(x,t)|}{d((x,t),(y,s))^\a} \leq C\frac{d((x,t),(y,s))^{1-\a}}{\r^{1-\a}}\leq C.
\end{align*}
Otherwise, if $y\notin B_\r^d(x)$ then by Corollary \ref{cor:trun_holder},
\begin{align*}
|\bar u(y,s) - \bar u(x,t)| &\leq |\bar u(y,s) - \bar u(y',0,s)| +|\bar u(y',0,s) - \bar u(x',0,t)| +|\bar u(x',0,t) - \bar u(x,t)|,\\
&\leq C\1y_n^\a+|(x',0,t)-(y',0,s)|^\a+x_n^\a\2,\\
&\leq Cd((x,t),(y,s))^\a.
\end{align*}
This is the desired estimate which concludes the proof of the theorem.
\end{proof}


\section{H\"older Bootstrap}\label{sec:improvement}

In this section we establish the iterative procedure that allows us to recover the interior H\"older estimate for $D_{\R^{n-1}}\bar u$. For uniformly elliptic fully nonlinear equations, this can be done by assuming that the oscillation of the difference quotient $\d_{he}\bar u/h^\b$ is bounded. Using then that $\d_{he}\bar u/h^\b$ satisfies an equation with bounded measurable coefficients one then obtains a H\"older estimate for $\d_{he}\bar u/h^\b$ which can be used to improve the exponent. In this case we do not know that $\d_{he}\bar u/h^\b$ satisfies an equation with bounded measurable coefficients, however we expect that as $\e\to0$ the equation linearizes.

One of the challenges is to recover this argument uniformly in $h$. As we will see one of the main ideas of this inductive argument is to start with a hypothesis that controls a $C^\eta$ seminorm of the difference quotient. It turns out that this allows us to control the difference quotient for $h$ arbitrarily small by using some useful interpolation lemmas included in the appendix.

For the following Lemma, $\a\in(0,1)$ is a sufficiently small H\"older exponent such that Liouville's Theorem \ref{thm:liouville} and the H\"older estimate Theorem \ref{thm:trun_holder} hold for this given exponent. The constants $0<\l\leq \L<\8$ used in the next proof are also universal and determined in Section \ref{sec:jensen}.

\begin{lemma}\label{lem:improvement}
Let $u(\cdot,t) \in C(B_1(-te_n)\to[0,\8))$ be a viscosity solution of the Hele-Shaw problem in the time interval $(-1,0]$, $\bar u$ the hodograph transform of $u$ with respect to $\e>0$, and $e\in \p B_1^{n-1}$. Given $\b\in(0,1)$, $\eta \in (0,1-\b)$, $C_0>0$ and $r\in(0,1/2)$; there exists $\e_0\in(0,1)$ and $C>0$ depending on $\beta$, $\eta$, $C_0$, and $r$ such that if the following hypotheses are satisfied,
\begin{align*}
\e\in\10,\e_0\2,\qquad \|\bar u\|_{L^\8\1Q_1^+\2} \leq 1,\qquad \sup_{\substack{h \in \1C_0\e,r\2}}\left[\frac{\d_{he}\bar u}{h^\b}\right]^*_{C^\eta_{trun(C_0\e)}\1\bar B_r^d\2} \leq C_0,
\end{align*}
then,
\begin{align*}
\sup_{\substack{\rho\in(0,r/2)\\(x,t) \in B^d_{r/4}\\h\in(\rho^2,\rho)}} \rho^{-\a}\left[\frac{\d_{he} \bar u}{h^\b}\right]^*_{C^\eta_{trun(\rho^2)}\1 B^d_\rho(x,t)\2} \leq C.
\end{align*}
\end{lemma}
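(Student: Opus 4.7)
The plan is to argue by contradiction, extract a blow-up limit, and apply the Liouville Theorem \ref{thm:liouville} to the global linearized problem.

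Suppose, toward a contradiction, that the stated bound fails. Then for each $k\geq 1$ one produces a Hele--Shaw solution $u_k$ satisfying the hypotheses with flatness $\varepsilon_k<1/k$, a scale $\rho_k\in(0,r/2)$, a point $(x_k,t_k)\in B^d_{r/4}$, and a step $h_k\in(\rho_k^2,\rho_k)$ for which
\[
M_k:=\rho_k^{-\alpha}\left[\frac{\delta_{h_k e}\bar u_k}{h_k^{\beta}}\right]^*_{C^{\eta}_{trun(\rho_k^2)}(B^d_{\rho_k}(x_k,t_k))}\longrightarrow\infty.
\]
Define the normalized blow-up
\[
w_k(y,s):=\frac{1}{M_k\,\rho_k^{\alpha+\eta}}\left(\frac{\delta_{h_k e}\bar u_k}{h_k^{\beta}}(x_k+\rho_k y,\,t_k+\rho_k s)-c_k\right),
\]
with the centering $c_k$ chosen so that $w_k(0,0)=0$. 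By the homogeneity of $d$ recorded in Section \ref{sec:multi-valued_fn}, the truncated $C^{\eta}$-seminorm of $w_k$ on $B^d_1$ equals $1$ by construction, while the global hypothesis translates (taking into account how truncation scales) into a controlled growth $[w_k]^*_{C^{\eta}_{trun(\rho_k)}(B^d_R)}\leq C R^{\alpha+\eta}$ for $R$ up to order $r/\rho_k$.

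This uniform control combined with Arzel\`a--Ascoli and a diagonal extraction yields a locally uniform limit $w_k\to w$ on $\bar{\mathbb{R}}^n_+\times(-\infty,0]$, which inherits unit $C^{\eta}$-seminorm on $\bar B^d_1$ and the $R^{\alpha+\eta}$ growth at infinity; in particular $w$ is \emph{not} constant. The central analytic step is to verify that $w$ satisfies
\[
\Delta w=0\text{ in }\mathbb{R}^n_+,\quad \inf_{a\in[\lambda,\Lambda]}a\,\partial_n w\leq\partial_t w\leq\sup_{a\in[\lambda,\Lambda]}a\,\partial_n w\text{ in }\mathbb{R}^{n-1}\times(-\infty,0],
\]
in the viscosity sense of Definition \ref{def:viscosity_linear}. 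Harmonicity in $\mathbb{R}^n_+$ passes cleanly because $\Delta\bar u_k=\varepsilon_k F_{\varepsilon_k}(D^2\bar u_k,D\bar u_k)=O(\varepsilon_k)$ and difference quotients commute with $\Delta$. The boundary inequalities come from applying $\delta_{h_k e}/h_k^{\beta}$ to $\partial_t\bar u_k=\partial_n\bar u_k+\varepsilon_k G_{\varepsilon_k}(D\bar u_k)$ on $\mathbb{R}^{n-1}$: the two evaluations of $\partial_n\bar u_k$ at the translated base points yield, after passing to a further subsequence, a bounded measurable coefficient in $[\lambda,\Lambda]$, while the $\varepsilon_k G_{\varepsilon_k}$ term vanishes. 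The viscosity passage is precisely what Section \ref{sec:jensen} supplies, via an adaptation of Kim's inf/sup-convolution regularization \cite{MR1994745}. Once $w$ enters the hypothesis class of Theorem \ref{thm:liouville}, it must be constant, contradicting the unit $C^{\eta}$-seminorm on $\bar B^d_1$.

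\textbf{Main obstacle.} The most delicate ingredient is the stable limit passage for the boundary relation. After rescaling, the effective step $h_k/\rho_k$ can be as small as $\rho_k$, which may lie well below the rescaled flatness scale $\varepsilon_k/\rho_k$; thus the control on $\delta_{he}\bar u_k/h^{\beta}$, supplied only for $h>C_0\varepsilon_k$, cannot be used directly on the rescaled steps. The $C^{\eta}$ seminorm hypothesis (with $\eta>0$) is exactly what supplies the extra compactness needed to reach these sub-flatness difference quotients by interpolation from macroscopic ones, in the spirit of Serra \cite{MR3385173,serra2014c} and of the first author and Kriventsov \cite{2015arXiv150406294C,2015arXiv150507889C}. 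A parallel technical point is to secure the $R^{\alpha+\eta}$ growth required by Definition \ref{def:viscosity_linear}, so that Liouville applies; this again rests on the uniform $C^{\eta}$ hypothesis rather than on a mere $L^{\infty}$ bound of the difference quotient.
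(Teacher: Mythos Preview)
Your overall strategy—contradiction, blow-up, Liouville—matches the paper's, but the normalization step has a real gap. You pick a single bad sequence $(\rho_k,x_k,t_k,h_k)$ with $M_k\to\infty$ and divide by $M_k\rho_k^{\alpha+\eta}$, then assert that ``the global hypothesis translates\dots into a controlled growth $[w_k]^*_{C^\eta_{trun(\rho_k)}(B^d_R)}\leq C R^{\alpha+\eta}$.'' It does not: rescaling the hypothesis $[\cdot]^*_{C^\eta_{trun(C_0\varepsilon_k)}(B^d_r)}\leq C_0$ only gives $[w_k]^*_{C^\eta}\leq C_0/(M_k\rho_k^{\alpha})$ on $B^d_R$, a bound independent of $R$ that you do not control from either side. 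Without growth control you cannot apply Liouville, and without a two-sided bound you cannot guarantee the nontriviality $[w_k]=1$ survives in the limit (H\"older seminorms are only lower semicontinuous under uniform convergence). The paper fixes this by introducing the monotone quantity
\[
\Theta(\rho):=\sup_{\varrho\in(\rho,r/2),\,(x,t)\in B^d_{r/4}}\ \lim_{k\to\infty}\ \sup_{h\in(\varrho^2,\varrho)}\varrho^{-\alpha}\left[\tfrac{\delta_{he}\bar u_k}{h^{\beta}}\right]^*_{C^\eta_{trun(\varrho^2)}(B^d_\varrho(x,t))},
\]
choosing realizers $(\varrho_m,x_m,t_m)$ of $\tfrac12\Theta(\rho_m)\to\infty$ and \emph{afterward} choosing $k_m$ so large that $C_0\varepsilon_{k_m}<\varrho_m^{5/\alpha}$. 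Normalizing by $\varrho_m^{\beta+\alpha}\Theta(\varrho_m)$ (not by the value at a single realizer) and using the monotonicity $\Theta(R\varrho_m)\leq\Theta(\varrho_m)$ is exactly what produces the clean bound $[\cdot]^*_{C^\eta_{trun}}(B^d_R)\leq R^\alpha$. The decoupling of $k_m$ from $\varrho_m$ is also what makes the effective flatness satisfy $\zeta_m\|\bar v_m\|^{2/\alpha}_{L^\infty}\to 0$, which is the hypothesis of Theorem~\ref{thm:lim_eq}; in your coupled setup $\varepsilon_k$ and $\rho_k$ come from the same solution and there is no mechanism forcing this.

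Two further points handled in the paper that your sketch omits: (i) the rescaled step $h_m/\varrho_m\in(\varrho_m,1)$ may tend to $0$, in which case $\delta_{h_me}$ degenerates; the paper invokes the interpolation Corollary~\ref{lem:appendix3} to replace $h_m$ by a step in $(c,1)$ with $c>0$ universal, losing only a factor of two in the nontriviality bound; (ii) the realizing point may lie in the interior $(x_m)_n>\varrho_m$ rather than near the boundary, in which case the limiting problem is the purely elliptic Liouville theorem on a half-space—the paper treats this as a separate, easier case.
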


\begin{remark}
The previous estimate already implies that $\d_{he} \bar u$ and $\bar u$ are continuous (single-valued) functions with respect to the topology induced by the metric $d$.
\end{remark}

\begin{proof}
Assume by contradiction that for some sequence $\e_k\to 0^+$ there exists a sequence of solutions $u_k(\cdot,t)$ such that the hodograph transform $\bar u_k$ of $u_k$ with respect to $\e_k$ satisfies,
\begin{align*}
\|\bar u_k\|_{L^\8\1Q_1^+\2}\leq 1 \qquad \text{and}\qquad \sup_{\substack{h\in\1C_0\e_k,r\2}}\left[\frac{\d_{he}\bar u_k}{h^\b}\right]^{*}_{C^\eta_{trun(C_0\e_k)}\1 B_r^d\2} \leq C_0.
\end{align*}
However,
\[
\Theta(\r) := \sup_{\substack{\varrho\in(\r,r/2)\\(x,t) \in B^d_{r/4}}} \lim_{k\to\8} \sup_{\substack{h\in(\varrho^2,\varrho)}}\varrho^{-\a}\left[\frac{\d_{he} \bar u_k}{h^\b}\right]^*_{C^\eta_{trun(\varrho^2)}\1 B^d_\varrho(x,t)\2} \nearrow \8 \qquad \text{ as $\r\to0^+$}.
\]

Consider a sequence $\r_m\to0^+$ and let $\varrho_m \in (\r_m,r/2)$ and $(x_m,t_m) = (x_m',(x_m)_n,t_m) \in B^d_{r/4}$ such that,
\begin{align}\label{eq:Theta}
\lim_{k\to\8} \sup_{\substack{h\in\1 \varrho_m^2,\varrho_m\2}} \varrho_m^{-\a} \left[\frac{\d_{he} \bar u_{k}}{h^\b}\right]^*_{C^\eta_{trun(\varrho_m^2)}\1 B^d_{\varrho_m}(x_m,t_m)\2} \geq \frac{1}{2}\Theta\1\r_m\2\to\8.
\end{align}
Let $k_m$ sufficiently large such that,
\[
C_0\e_{k_m}<\varrho_m^{5/\a} \qquad\text{and}\qquad \sup_{\substack{h\in\1 \varrho_m^2,\varrho_m\2}} \varrho_m^{-\a} \left[\frac{\d_{he} \bar u_{k_m}}{h^\b}\right]^*_{C^\eta_{trun(\varrho_m^2)}\1B^d_{\varrho_m}(x_m,t_m)\2} \geq \frac{1}{4}\Theta\1\r_m\2.
\]
We get from the truncated $C^\eta$ control that,
\[
\sup_{\substack{h\in\1 \varrho_m^2,\varrho_m\2}} \left[\frac{\d_{he} \bar u_{k_m}}{h^\b}\right]^*_{C^\eta_{trun(\varrho_m^2)}\1B^d_{\varrho_m}(x_m,t_m)\2} \leq \sup_{\substack{h\in\1 C_0\e_{k_m},r\2}} \left[\frac{\d_{he} \bar u_{k_m}}{h^\b}\right]^{*}_{C^\eta_{trun(C_0\e_{k_m})}\1B^d_r\2} \leq C_0.
\]
Then \eqref{eq:Theta} implies that $\varrho_m\to0$.

After taking a subsequence we can assume one of the following two alternatives.

\textbf{Case I:} $(x_m)_n\leq \varrho_m$. Consider the following rescalings centered at $(x_m',0,t_m)$ for $t \in (-\mathcal R_m,0]$ where $\mathcal R_m:=r/(2\sqrt{n} \varrho_m) \to\8$,
\begin{alignat*}{3}
v_m(x,t) &:= \frac{u_{k_m}(\varrho_m x + x_m', \varrho_m t + t_m)}{\varrho_m},\quad &&\bar v_m(x,t) := \frac{\bar u_{k_m}(\varrho_m x + x_m', \varrho_m t + t_m)}{\varrho_m^{\b+\a}\Theta(\varrho_m)}.
\end{alignat*}
Therefore $v_m(\cdot,t)\in C(B_{\mathcal R_m}(-te_n)\to[0,\8))$ is also solution of Hele-Shaw and $\bar v_m$ is its hodograph transform with respect to $\zeta_m:= \e_{k_m}\varrho_m^{\b+\a-1}\Theta(\varrho_m)$. The hypotheses for $\bar u_{k_m}$ imply the following for any radius $R\geq 1$,
\begin{align}
\label{eq:flatness}&\zeta_m \|\bar v_m\|_{L^\8\1\bar Q_R^+\2}^{2/\a} \leq \e_{k_m}\varrho_m^{\b+\a-1}\Theta(\varrho_m)\1\frac{\|\bar u_{k_m}\|_{L^\8\1B_{\sqrt{n}\varrho_m R}^d(x_m',t_m)\2}}{\varrho_m^{\b+\a}\Theta(\varrho_m)}\2^{2/\a} \leq \e_{k_m}^{1/2},\\
\label{eq:compactness}&\sup_{\substack{h\in(\varrho_m R^2,R)}} \left[\frac{\d_{he} \bar v_m}{h^\b}\right]^*_{C^\eta_{trun(\varrho_m R^2)}\1 B_R^d\2} \leq R^\a,\\
\label{eq:liouville}&\sup_{\substack{h\in\1\varrho_m,1\2}} \left[\frac{\d_{he}\bar v_m}{h^\b}\right]^*_{C^\eta_{trun(\varrho_m)}\1 B_2^d\2} \geq \frac{1}{4}.
\end{align}
Indeed, \eqref{eq:compactness} gets deduced from the following computation,
\begin{align*}
\sup_{\substack{h\in(\varrho_m R^2,R)}} \left[\frac{\d_{he} \bar v_m}{h^\b}\right]^*_{C^\eta_{trun(\varrho_mR^2)}\1B^d_R\2} &\leq \sup_{\substack{h\in(\varrho_m^2 R^2, \varrho_m R)}} \frac{\varrho_m^{-\a}}{\Theta(\varrho_m)}\left[\frac{\d_{he} \bar u_{k_m}}{h^\b}\right]^*_{C^\eta_{trun(\varrho_m^2R^2)}\1 B_{\varrho_mR}^d(x_m',t_m)\2},\\
&\leq R^{\a}\frac{\Theta(\varrho_m R)}{\Theta(\varrho_m)},\\
&\leq R^{\a}.
\end{align*}
On the other hand, \eqref{eq:liouville} gets deduced from,
\begin{align*}
\sup_{\substack{h\in\1\varrho_m,1\2}} \left[\frac{\d_{he}\bar v_m}{h^\b}\right]^*_{C^\eta_{trun(\varrho_m)}\1B_2^d\2} &= \frac{\varrho_m^{-\a}}{\Theta(\varrho_m)}\sup_{\substack{h\in\1\varrho_m^2,\varrho_m\2}} \left[\frac{\d_{he}\bar u_{k_m}}{h^\b}\right]^*_{C^\eta_{trun(\varrho_m^2)}\1B_{2\varrho_m}^d(x_m',t_m)\2}\\
&\geq  \frac{\varrho_m^{-\a}}{\Theta(\varrho_m)}\sup_{\substack{h\in\1\varrho_m^2,\varrho_m\2}} \left[\frac{\d_{he}\bar u_{k_m}}{h^\b}\right]^*_{C^\eta_{trun(\varrho_m^2)}\1B_{\varrho_m}^d(x_m,t_m)\2} \geq \frac{1}{4}.
\end{align*}

From \eqref{eq:compactness} for $R=2$ and \eqref{eq:liouville}, Corollary \ref{lem:appendix3} implies that there exists $c>0$, depending on $(1-(\b+\eta))$ and $C_0$, but independent of $m$, such that for some $h_m \in (c,1)$
\begin{align*}
\left[\frac{\d_{h_m e}\bar v_m}{h_m^\b}\right]^*_{C^\eta_{trun(\varrho_m)}\1B_2^+\2} \geq \frac{1}{8}.
\end{align*}
Let us assume then without loss of generality that $h_m\to h \in [c,1] \ss (0,1]$. Moreover, after having fixed $h$ we can assume from \eqref{eq:compactness} that the following convergence holds locally uniformly over the boundary $\R^{n-1}\times(-\8,0]$,
\[
\d_{he}\bar v_m(\cdot,0,\cdot) - \d_{h e}\bar v_m(0,0,0) \to w(\cdot,0,\cdot),
\]
where for any $R\geq 1$,
\begin{align*}
\|w(\cdot,0,\cdot)\|_{L^\8\1Q^{n-1}_{R/\sqrt{n}}\2} \leq C_0R^\a.
\end{align*}

For fixed $t\leq 0$ and any subsequence $m_l$, the control given by \eqref{eq:compactness} in $\R^n_+$ implies that $\d_{he} \bar v_{m_l}(\cdot,\cdot,t)$ has an accumulation point which we also denote by $w(\cdot,\cdot,t)$, now extended to $\R^n_+$. By Lemma \ref{lem:lim_eq_elliptic} we get that $w(\cdot,\cdot,t)$ is harmonic, takes the boundary value $w(\cdot,0,t)$ an is sub-linear at infinity. By the uniqueness of solutions of the Dirichlet problem for the Laplace equation over $\R^n_+$ with sublinear growth at infinity and the arbitrariness of the subsequence, we recover that the original sequence $\d_{he} \bar v_{m}(\cdot,\cdot,t)$ has to converge to $w(\cdot,\cdot,t)$ locally uniformly in $\bar\R^n_+$. Moreover, we also get local uniform convergence in time. Indeed, let us assume by contradiction that there exists $R,\eta>0$ such that for some sequence $t_m \to t_\8 \in [-R,0]$,
\[
\osc_{\bar B_R^+}  \1\d_{he}\bar v_m(\cdot,\cdot,t_m) - \d_{h e}\bar v_m(0,0,0) - w(\cdot,\cdot,t_m) \2 > \eta
\]
By compactness we can assume that $(\d_{he}\bar v_m(\cdot,\cdot,t_m) - \d_{h e}\bar v_m(0,0,0))$ convergences locally uniformly. By Lemma \ref{lem:lim_eq_elliptic} we get that the accumulation point has to be a harmonic function. By the local uniform convergence over $\R^{n-1}\times(-\8,0]$ we get that such harmonic function has to be $w(\cdot,\cdot,t_\8)$. This now contradicts the fact that the previous oscillation over $\bar B_R^+$ was uniformly greater than $\eta>0$.

From \eqref{eq:liouville} and the lower bound for $h$ we get that,
\begin{align}
\label{eq:liouville_limit}
[w]_{C^\eta(B^d_2)} > 0.
\end{align}

From \eqref{eq:flatness} and the previous analysis about the convergence of $\d_{he} \bar v_m \to w$, Theorem \ref{thm:lim_eq} implies that $w$ satisfies the following viscosity relations in $\R^{n-1}\times(-\8,0]$
\begin{align*}
\inf_{a\in[\l,\L]} a\p_n w\leq \p_tw \leq \sup_{a\in[\l,\L]} a\p_n w.
\end{align*}
Given that $w$ is sublinear at infinity, Liouville's Theorem \ref{thm:liouville} implies that $w$ is constant therefore contradicting \eqref{eq:liouville_limit}. This concludes Case I.

\textbf{Case II:} $(x_m)_n > \varrho_m$. In this case we now consider the rescalings centered at $(x_m,t_m)$
\begin{alignat*}{2}
v_m(x,t) &:= \frac{u_{k_m}(\varrho_m x + x_m, \varrho_m t + t_m)}{\varrho_m},\qquad &&\bar v_m(x,t) := \frac{\bar u_{k_m}(\varrho_m x + x_m, \varrho_m t + t_m)}{\varrho_m^{\b+\a}\Theta(\varrho_m)}.
\end{alignat*}
Similar to the previous case we find that for some $h>0$, $\d_{he}\bar v_m(\cdot,0)$ has an accumulation point $w \in C(\{x_n>-1\}\to\R)$ which is harmonic, sublinear at infinity and satisfies $[w]_{C^\eta(B_1)} > 0$. Notice the analysis now happens for a fixed time, taking the form of an elliptic problem. Again we obtain that $w$ provides a contradiction to Liouville's Theorem (just for the Laplace equation) from where we conclude Case II and the proof of the lemma.
\end{proof}

\begin{corollary}\label{cor:improvement}
Under the same assumptions of Lemma \ref{lem:improvement}: If $\b+\a+\eta<1$ then for some $C>0$ depending on $\beta$, $\eta$, $C_0$, and $r$,
\begin{align}\label{eq:less_than_one}
\sup_{h\in\1C\e,r/16\2}\left[\frac{\d_{he}\bar u}{h^{\b+\a}}\right]^*_{C^{\eta\a}_{trun\1C\e\2}\1B^d_{r/16}\2} \leq C.
\end{align}
If $\b+\a>1$ then the directional derivative $\p_e\bar u := e\cdot D\bar u$ exists and satisfies,
\begin{align}\label{eq:bigger_than_one}
[\p_e\bar u]_{C^\a\1Q_{r/(4\sqrt{n})}^{n-1}\2} \leq C.
\end{align}
\end{corollary}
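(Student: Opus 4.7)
The plan is to deduce both assertions of Corollary \ref{cor:improvement} from Lemma \ref{lem:improvement} by evaluating its conclusion at carefully chosen scales $\rho$ and then combining the resulting bounds with the uniform $C^\eta$ hypothesis, using interpolation for Part 1 and compactness/passing to the limit for Part 2.

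\textbf{Part 1 ($\beta+\alpha+\eta<1$).} Fix $h\in(C\e,r/16)$ with $C$ large enough that $h<1/4$, so that $\rho=2h$ is admissible in Lemma \ref{lem:improvement} (indeed $h\in(4h^2,2h)$). This choice yields
\[
\left[\frac{\d_{he}\bar u}{h^\beta}\right]^{*}_{C^\eta_{trun(4h^2)}\1B^d_{2h}(x,t)\2}\leq C(2h)^\alpha,
\]
or equivalently $[\d_{he}\bar u/h^{\beta+\alpha}]^{*}_{C^\eta_{trun(4h^2)}\1B^d_{2h}(x,t)\2}\leq C$ uniformly in $h$. For points with separation $D:=d((x,t),(y,s))\leq 2h$ the desired estimate follows directly, since $D\leq r/8<1$ gives $D^\eta\leq D^{\eta\alpha}$. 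For intermediate separations $D\in(2h,h^{1/2})$ the Lemma at the (still admissible) scale $\rho=D$ produces the pointwise bound
\[
|\d_{he}\bar u(x,t)-\d_{he}\bar u(y,s)|\leq Ch^\beta D^{\alpha+\eta}.
\]
For the remaining range $D\geq h^{1/2}$ the Lemma no longer applies, and we fall back on the hypothesis $[\d_{he}\bar u/h^\beta]^{*}_{C^\eta_{trun(C_0\e)}}\leq C_0$, which gives $|\d_{he}\bar u|\leq C_0h^\beta D^\eta$. Interpolating these three bounds by means of the elementary inequality $\min(A,B)\leq A^{1-\lambda}B^\lambda$, with $\lambda$ tuned to produce the exponent $\eta\alpha$ on $D$ and the amplitude $h^{\beta+\alpha}$, delivers the target H\"older estimate. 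The hypothesis $\beta+\alpha+\eta<1$ ensures the intermediate H\"older exponent $\alpha+\eta$ stays below $1$, keeping the interpolation in a consistent range.

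\textbf{Part 2 ($\beta+\alpha>1$).} Lemma \ref{lem:improvement} at scale $\rho=2h$ yields the oscillation bound $\osc_{B^d_{2h}}\d_{he}\bar u\leq Ch^{\beta+\alpha+\eta}$, whose exponent $\beta+\alpha+\eta-1>\eta>0$ is now strictly positive. Dividing by $h$ one obtains $\osc_{B^d_{2h}}(\d_{he}\bar u/h)\to 0$ as $h\to 0^+$. Combined with the H\"older estimate $|\d_{he}\bar u(x,t)-\d_{he}\bar u(y,s)|\leq Ch^\beta D^{\alpha+\eta}$ from the Lemma at $\rho=D$, one extracts uniform equicontinuity of the family $\{\d_{he}\bar u/h\}_{h>0}$ on $Q_{r/(4\sqrt n)}^{n-1}$, and Arzel\`a--Ascoli delivers a continuous subsequential limit. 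Standard difference-quotient arguments (comparing $\d_{h_1e}\bar u/h_1$ and $\d_{h_2e}\bar u/h_2$ via the Lemma applied to their difference) identify this limit as $\p_e\bar u$. The H\"older bound $[\p_e\bar u]_{C^\alpha(Q_{r/(4\sqrt n)}^{n-1})}\leq C$ is inherited by passing to the limit $h\to 0^+$ in the pointwise bound, choosing the auxiliary $\eta$ arbitrarily small so that only the $C^\alpha$ part survives.

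\textbf{Main obstacle.} The hardest step is the interpolation in Part 1: carefully combining the three bounds (small-scale from the Lemma at $\rho=2h$, intermediate from the Lemma at $\rho=D$, and large-scale from the hypothesis) so that the resulting constant is uniform in $h$ and $\e$, depending only on $\beta$, $\eta$, $C_0$, and $r$. A secondary obstacle in Part 2 is identifying the subsequential limit with the classical directional derivative $\p_e\bar u$; this requires Cauchy-type comparisons between difference quotients at distinct step sizes, which follow by applying the Lemma to the pair.
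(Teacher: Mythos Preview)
There is a genuine gap in Part 1. Your three bounds for $|\d_{he}\bar u(x,t)-\d_{he}\bar u(y,s)|$ are, schematically,
\[
\text{(A)}\;\;Ch^{\beta+\alpha}D^{\eta}\ \ (D\lesssim h),\qquad
\text{(B)}\;\;Ch^{\beta}D^{\alpha+\eta}\ \ (h<D<h^{1/2}),\qquad
\text{(C)}\;\;C_0h^{\beta}D^{\eta}\ \ (\text{all }D>C_0\e),
\]
and the target is $Ch^{\beta+\alpha}D^{\eta\alpha}$. In the regime $D\geq h^{1/2}$ the only information you have is (C), which carries the prefactor $h^{\beta}$, not $h^{\beta+\alpha}$. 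No convex combination $\min(A,B)\leq A^{1-\lambda}B^{\lambda}$ of bounds that all scale like $h^{\beta}$ can produce $h^{\beta+\alpha}$; the interpolation you sketch simply cannot close. (Also note: the hypothesis is a seminorm bound, so (C) gives control of the \emph{oscillation} of $\d_{he}\bar u$, not of $|\d_{he}\bar u|$ as you wrote.)

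What is missing is an $L^{\infty}$ bound on $\d_{he}\bar u$ itself with the improved exponent, namely $\|\d_{he}\bar u\|_{L^{\infty}}\leq Ch^{\beta+\alpha+\eta}$. The paper obtains this by first extracting a \emph{second} difference estimate: evaluating Lemma \ref{lem:improvement} at $\rho=h$ and at the two points $x\pm(h/2)e$ gives $|\d_{he}^{2}\bar u|\leq Ch^{\beta+\alpha+\eta}$, and then the interpolation Lemma \ref{lem:interpolation} (this is where $\beta+\alpha+\eta<1$ is used) upgrades this to the first-difference $L^{\infty}$ bound. With that bound in hand the case split is clean: for $\rho^{\alpha}\geq h$ one uses $\osc\,\d_{he}\bar u\leq 2Ch^{\beta+\alpha+\eta}$ directly, and for $\rho^{\alpha}<h$ one falls back on the $C^{\alpha}$ estimate for $\bar u$ from Theorem \ref{thm:trun_holder} (another ingredient you did not invoke). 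No geometric-mean interpolation is needed at this stage.

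Part 2 has the same defect. Your Arzel\`a--Ascoli argument requires a uniform $L^{\infty}$ bound on $\d_{he}\bar u/h$, but the oscillation bound $\osc_{B^{d}_{2h}}\d_{he}\bar u\leq Ch^{\beta+\alpha+\eta}$ does not furnish one; the hypothesis only gives $|\d_{he}\bar u|\lesssim h^{\beta}$, so $\d_{he}\bar u/h$ may blow up like $h^{\beta-1}$. The paper instead applies Lemma \ref{lem:appendix4}, whose proof is precisely the dyadic Cauchy-type comparison you describe, but carried out from the second-difference control rather than from a compactness step.
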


\begin{proof}
Setting $h=\r$ in Lemma \ref{lem:improvement} we get that,
\[
\sup_{h\in(0,r/8)} \left\|\frac{\d_{he}^2\bar u}{h^{\b+\a+\eta}}\right\|_{L^\8\1 B_{r/16}^d\2} \leq C.
\]
If $\b+\a+\eta<1$ then Lemma \ref{lem:interpolation} implies that,
\[
\sup_{h\in(0,r/8)} \left\|\frac{\d_{he}\bar u}{h^{\b+\a+\eta}}\right\|_{L^\8\1 B_{r/16}^d\2} \leq C.
\]
Our goal is to get that,
\[
\sup_{\substack{h,\r \in \1C\e,r/8\2\\(x_0,t_0)\in B^d_{r/16}}} \osc_{B^d_\r(x_0,t_0)} \frac{\d_{he}\bar u}{h^{\b+\a}\r^{\eta\a}} \leq C.
\]

We consider two cases. If $\r^{\a} \geq h$ we bound $\d_{he}\bar u$ in terms of $h$,
\[
\osc_{Q_\r^{n-1}(x_0,t_0)} \frac{\d_{he}\bar u}{h^{\b+\a}\r^{\eta\a}} \leq C\frac{h^\eta}{\r^{\eta\a}} \leq C.
\]
If $\r^{\a} \in ((C\e)^{\a},h)$ we bound $\d_{he}\bar u$ in terms of $\r$ using the truncated H\"older estimate given by Theorem \ref{thm:trun_holder},
\[
\osc_{Q_\r^{n-1}(x_0,t_0)} \frac{\d_{he}\bar u}{h^{\b+\a}\r^{\eta\a}} \leq \frac{\r^{\a-\eta\a}}{h^{\beta+\a}} \leq C.
\]
This concludes the case $\b+\a+\eta<1$.

If $\b+\a>1$ then Lemma \ref{lem:appendix4} implies that $\p_e\bar u$ exists and satisfies \eqref{eq:bigger_than_one}.
\end{proof}

The following corollary provides the last step proving Theorem \ref{thm: main}. In the following we let $M$ be a positive integer sufficiently large such that the previous results hold for $\a = 1/M$.

\begin{corollary}\label{cor:main_thm}
Let $u(\cdot,t) \in C(B_1(-te_n)\to[0,\8))$ be a viscosity solution of the Hele-Shaw problem in the time interval $(-1,0]$ and $\bar u$ the hodograph transform of $u$ with respect to $\e>0$. There exist $\e_0\in(0,1)$, $r\in(0,1)$ and $C>0$ universal such that,
\[
\e\in\10,\e_0\2, \qquad \|\bar u\|_{L^\8\1\bar Q_1^+\2} \leq 1 \qquad \Rightarrow \qquad [D_{\R^{n-1}}\bar u]_{C^{\a/2}\1Q_r^{n-1}\2} \leq C.
\]
\end{corollary}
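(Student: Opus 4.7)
The plan is a finite bootstrap. Theorem~\ref{thm:trun_holder} serves as the ignition, providing a truncated $C^\alpha$ modulus for $\bar u$ itself. Translating this estimate into a statement about centered differences yields, for initial parameters $\beta_0 = \eta_0 = \alpha/4$ (or any positive pair with $\beta_0 + \eta_0 < \alpha$) and some radius $r_0 \in (0, 1/4)$, the bound
\[
\sup_{h \in (C_0 \varepsilon, r_0)} \left[\frac{\delta_{he} \bar u}{h^{\beta_0}}\right]^*_{C^{\eta_0}_{trun(C_0 \varepsilon)}(\bar B^d_{r_0})} \leq C_0,
\]
uniformly in $e \in \partial B_1^{n-1}$. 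This is exactly the hypothesis required by Lemma~\ref{lem:improvement} and the first alternative of Corollary~\ref{cor:improvement}; the check is routine once one splits the difference of centered differences according to whether the spatial distance is smaller or larger than $h$, using the inequality $|\delta_{he}\bar u(x,t) - \delta_{he}\bar u(y,s)| \leq 2C \min(d^\alpha, h^\alpha)$ inherited from Theorem~\ref{thm:trun_holder}.

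I would then iterate the first alternative of Corollary~\ref{cor:improvement}: at each step the exponent pair $(\beta, \eta)$ gets replaced by $(\beta + \alpha, \eta \alpha)$, the working radius shrinks by a universal factor of $16$, and the truncation constant is multiplied by a universal amount. The iteration can be sustained as long as $\beta_k + \alpha + \eta_k < 1$; since $\eta_k = \eta_0 \alpha^k$ becomes negligible after a step or two, the binding condition reduces essentially to $\beta_k < 1 - \alpha$. With $\alpha = 1/M$, after $N = M - 1$ iterations one reaches $\beta_N = \alpha/4 + (M-1)\alpha = 1 - 3/(4M)$, so that $\beta_N + \alpha > 1$ and the second alternative of Corollary~\ref{cor:improvement} applies. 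This produces $[\partial_e \bar u]_{C^\alpha(Q_{r/(4\sqrt{n})}^{n-1})} \leq C$ on a cube of universal radius $r > 0$. Carrying the argument out for $e = e_1, \ldots, e_{n-1}$ and taking the minimum of the resulting radii yields the full $C^\alpha$ modulus on $D_{\mathbb{R}^{n-1}} \bar u$, which in particular implies the claimed $C^{\alpha/2}$ estimate.

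The main obstacle is the bookkeeping. One must choose $\varepsilon_0$ small enough that, after $N$ iterations in which the truncation constant grows multiplicatively and the radius contracts by a factor of $16$ at each step, the effective truncation $C_N \varepsilon$ remains dominated by $r_N^2$, as required by the range of $h$ in Lemma~\ref{lem:improvement}. Since $N = M - 1$ is universal, this amounts to a single universal smallness condition on $\varepsilon_0$. One must also verify at every step that $\beta_k + \alpha + \eta_k < 1$ continues to hold and that $\eta_k > 0$; the margin built into the initial choice $\beta_0 + \eta_0 = \alpha/2 < \alpha$ leaves ample room for both constraints throughout the $M - 1$ steps.
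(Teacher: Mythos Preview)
Your proposal is correct and follows essentially the same bootstrap as the paper's proof: ignite with Theorem~\ref{thm:trun_holder}, iterate the first alternative of Corollary~\ref{cor:improvement} a fixed number of times (raising $\beta$ by $\alpha$ and shrinking $\eta$ by a factor of $\alpha$ at each step), and terminate with the second alternative once $\beta+\alpha>1$. The paper initializes at $\beta_0=\eta_0=\alpha/2$ rather than your $\alpha/4$, but this is immaterial; your explicit verification of the base case and of the constraint $\beta_k+\alpha+\eta_k<1$ is in fact more detailed than what the paper records.
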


\begin{proof}
Let $e\in \p B^{n-1}_1$, $\b = 1-\a/2$ and for $k=0,1,2,\ldots,(M-2)=(1/\a-2)$,
\[
\b_k := \a/2+k\a,\qquad \eta_k := (1-\b)\a^k,\qquad r_k:=16^{-(k+1)}.
\]
Our first claim is that for each $k=0,1,2,\ldots,(M-1)$, there exists $\e_k\in(0,1)$ and $C_k>0$ such that,
\begin{align}\label{eq:inductive_hyp}
\e\in\10,\e_k\2, \qquad \|\bar u\|_{L^\8\1\bar Q_1^+\2} \leq 1 \qquad\Rightarrow\qquad \sup_{h\in\1C_k\e,r_k\2}\left[\frac{\d_{he}\bar u}{h^{\b_k}}\right]_{C^{\eta_k}_{trun\1C_k\e\2}\1B^d_{r_k}\2} \leq C_k.
\end{align}

Theorem \ref{thm:trun_holder} provides the result for $k=0$. Let us assume that \eqref{eq:inductive_hyp} holds for some $k<(M-2)$ and let $\bar\e_k$ and $\bar C_k>0$ be the constants resulting from applying Corollary \ref{cor:improvement} with respect to $\b_k$, $\eta_k$, $C_k$ and $r_k$. Then, the inductive step holds for $\e_{k+1} = \min(\e_k,\bar\e_k)$ and $C_{k+1}:=\max(C_k,\bar C_k)$.

For the final step we consider $\bar\e$ and $\bar C>0$ be the constants resulting from applying Corollary \ref{cor:improvement} with respect to $\b_{M-1} = 1-\a/2$, $\eta_{M-1}$, $C_{M-1}$ and $r_{M-1}$ and $\e_0 := \min(\e_{m-1},\bar\e)$ and $C :=\max(C_{m-1},\bar C)$. The estimate then gets established for the directional derivative $\p_e\bar u$ after applying Lemma \ref{lem:improvement} now in the case where $\b+\a>1$. The arbitrariness of the direction $e\in \p B^{n-1}_1$ settles the proof.
\end{proof}

\section{Limiting Equations}\label{sec:jensen}

Our goal in this section is to recover a limiting equation for the difference quotient of the free boundary as the flatness goes to zero. Assuming for a moment that the Hele-Shaw equations for $u$ hold in the global domain $\R^n\times(-\8,0]$ and that $\bar u$ is single valued and smooth, we get to differentiate the relation,
\[
u(x-(t+\e\bar u(x,t))e_n,t)= x_n.
\]
From the free boundary condition we get that $\bar u$ satisfies in $\R^{n-1}\times(-\8,0]$,
\begin{align}\label{eq:boundary_limit}
\underbrace{\frac{1+\e\p_t\bar u}{\sqrt{1+\e^2|D_{\R^{n-1}}\bar u|}}}_{\frac{\p_t u}{|Du|}} = \underbrace{\frac{\sqrt{1+\e^2|D_{\R^{n-1}}\bar u|}}{1-\e\p_n\bar u}}_{|Du|}\qquad \Rightarrow \qquad \partial_t \bar u = \frac{\p_n \bar u}{1-\e\p_n \bar u} + \e\frac{|D_{\R^{n-1}}\bar u|^2}{1-\e\p_n \bar u}.
\end{align}
Consider now the difference $w(x,t) = \d_{h e}\bar u(x,t)$, where $h>0$ and $e\in\p B_1^{n-1}$. Then,
\begin{align*}
\partial_t w = \d_{he}\1\frac{\p_n \bar u}{1-\e\p_n\bar u}\2 + \e  \d_{h e}\1\frac{|D_{\R^{n-1}}\bar u|^2}{1-\e\p_n \bar u}\2 = a\p_n w + \e \d_{h e}\1\frac{|D_{\R^{n-1}}\bar u|^2}{1-\e\p_n \bar u}\2,
\end{align*}
where
\begin{align*}
a(x,t) = \left.\frac{1}{1-\e\p_n\bar u}\right|_{(x,t)}+\e\1\left.\frac{1}{1-\e\p_n\bar u}\right|_{(x+(h/2)e,t)}\2\1\left.\frac{1}{1-\e\p_n\bar u}\right|_{(x-(h/2)e,t)}\2
\end{align*}
In order to obtain a uniformly elliptic equation as the flatness $\e\to0$ it is desirable to have
\[
0 < \l \leq \frac{1}{1-\e\p_n\bar u} = |Du| \leq \L < \8.
\]
We will see that these bounds can be enforced at regular points of the free boundary by combining the flatness hypothesis with the standard barrier argument used in the Hopf Lemma.

Here is the main result of this section. The exponent $\a$ is the one from Theorem \ref{thm:trun_holder}.

\begin{theorem}\label{thm:lim_eq}
Let $\e_k\to0^+$, $\mathcal R_k\to\8$, $u_k(\cdot,t) \in C(B_{\mathcal R_k}(-te_n)\to[0,\8))$ be a sequence of viscosity solutions of the Hele-Shaw problem in the time interval $(-\mathcal R_k,0]$, $\bar u_k$ its hodograph transform with respect to $\e_k$, $e\in\p B_1^{n-1}$ and $h>0$ such that,
\begin{alignat*}{2}
&\e_k^{\a/2}\|\bar u_k\|_{L^\8\1Q_R^+\2} \xrightarrow[k\to\8]{} 0.\\
&\d_{he}\bar u_k \xrightarrow[k\to\8]{} w \quad &&\text{locally uniformly in $\R^n_+\times(-\8,0]$},\\
&\|w\|_{L^\8(Q_R^+)} \leq CR^\a.
\end{alignat*}
then $w(\cdot,t)$ is a harmonic function in $\R^n_+$ and satisfies the following global viscosity relations in $\R^{n-1}\times (-\8,0]$,
\begin{align*}
\inf_{a\in[\l,\L]} a\p_n w \leq \p_t w \leq \sup_{a\in[\l,\L]} a\p_n w
\end{align*}
for some ellipticity constants $0<\l\leq\L<\8$ depending only on $n$.
\end{theorem}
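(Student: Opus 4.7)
The proof is in two parts: the interior harmonicity and the boundary inequalities. For the harmonicity, I would use that the hodograph construction converts the harmonicity of $u_k$ on $\{u_k>0\}$ into a fully nonlinear equation of the form $\D \bar u_k = \e_k F_{\e_k}(D^2 \bar u_k, D\bar u_k)$ in $\R^n_+$ (in a viscosity sense inherited from $u_k$), where $F_{\e_k}$ is smooth in its arguments with uniformly bounded coefficients. Differencing this equation in the spatial variable, $\d_{he}\bar u_k$ satisfies an analogous perturbed Laplace equation with an $O(\e_k)$ right-hand side on every compact subset of $\R^n_+$. The hypothesis $\e_k^{\a/2}\|\bar u_k\|_{L^\8(Q_R^+)}\to 0$ controls the rescaled derivatives of $\bar u_k$, so passing to the local uniform limit via the standard stability of viscosity solutions yields $\D w(\cdot,t)=0$ in $\R^n_+$ for every $t\leq 0$.

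For the boundary inequality, let $\varphi\in C^\8(\bar Q_r^+)$ touch $w$ strictly from above at $(x_0',0,t_0)\in\R^{n-1}\times(-\8,0]$; the goal is $\p_t\varphi(x_0',0,t_0)\leq \sup_{a\in[\l,\L]} a\p_n\varphi(x_0',0,t_0)$. Because $w(\cdot,t)$ is already harmonic in $\R^n_+$ by the first part, strict upper contacts can only happen on the boundary $\R^{n-1}$ (otherwise the strong maximum principle is violated). The local uniform convergence $\d_{he}\bar u_k\to w$ then produces, for large $k$, constants $c_k\to 0$ and boundary points $(x_k,0,t_k)\to(x_0',0,t_0)$ at which $\varphi+c_k$ touches $\d_{he}\bar u_k$ from above. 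Unpacking the definition of $\d_{he}$, this is equivalent to saying that the function $y\mapsto \bar u_k(y-he,0,t)+\varphi(y-(h/2)e,0,t)+c_k$ is an upper barrier for $\bar u_k$ at the point $(x_k+(h/2)e,0,t_k)$.

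The next step translates this into a barrier for $u_k$ itself. Using the parametrization $\G_{u_k}(t)=\{x_n=-t-\e_k\bar u_k(x',0,t)\}$, an upper barrier for $\bar u_k$ on $\R^{n-1}$ corresponds to a lower barrier for the free boundary height and, after harmonic extension, to a comparison subsolution $\psi_k$ of Hele-Shaw touching $u_k$ from below at the contact free-boundary point. Applying the free boundary relation $\p_tu_k/|Du_k|=|Du_k|$ there and unwinding the identity \eqref{eq:boundary_limit} produces a relation involving $\p_t\varphi$, $\p_n\varphi$, and the one-sided normal derivative of the translate $\bar u_k(\cdot-he,0,\cdot)$; the coefficient that emerges is precisely $|Du_k|^2$ at the relevant free boundary points, which by Hopf's lemma combined with the flatness hypothesis satisfies $0<\l\leq|Du_k|\leq \L<\8$ with $\l,\L$ depending only on $n$. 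Sending $k\to\8$ (so $\e_k\to 0$), the coefficients $a_k=|Du_k|^2$ have subsequential limits in $[\l,\L]$; taking the supremum over all accumulation points yields $\p_t\varphi\leq \sup_{a\in[\l,\L]}a\p_n\varphi$ at $(x_0',0,t_0)$. The reverse inequality follows by the symmetric argument with test functions touching from below.

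The main obstacle is that the barrier $y\mapsto \bar u_k(y-he,0,t)+\varphi(y-(h/2)e,0,t)+c_k$ is not smooth because it contains a translate of $\bar u_k$ itself, which prevents the direct use of viscosity comparison for $u_k$. I would address this following the inf/sup-convolution technique of Kim \cite{MR1994745}: replacing $\bar u_k(\cdot-he,0,\cdot)$ by its inf-convolution at a small scale $\varsigma$, which is semiconcave and therefore twice differentiable almost everywhere, the barrier becomes an a.e.\ classical subsolution of Hele-Shaw. The free boundary relation can then be evaluated pointwise at Lebesgue-regular contact points, and sending $\varsigma\to 0$ followed by $k\to\8$ recovers the desired viscosity inequality. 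A further subtlety is to propagate the Hopf-type bound on $|Du_k|$ through the approximation without degenerating, and to arrange the inf-convolution so as not to corrupt the flatness bounds controlling the $O(\e_k)$ error terms in \eqref{eq:boundary_limit}.
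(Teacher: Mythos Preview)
Your treatment of the harmonicity of $w(\cdot,t)$ is essentially the same as the paper's Lemma~\ref{lem:lim_eq_elliptic}: interior estimates make $\bar u_k$ smooth away from $\{x_n=0\}$, implicit differentiation produces a perturbed Laplace equation with $O(\e_k N_k^2)$ coefficients, and stability gives $\D w=0$.

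The gap is in the boundary step. You propose to take the upper barrier $\bar u_k(\cdot-he,0,\cdot)+\varphi+c_k$ for $\bar u_k$, pass to a graph for the free boundary, and then ``harmonically extend'' to obtain a comparison subsolution $\psi_k$ touching $u_k$ from below. The issue is that being a comparison subsolution requires the inequality $\p_t\psi_k/|D\psi_k|\leq|D\psi_k|$ on $\G_{\psi_k}$, and for that you need the \emph{Neumann derivative of the harmonic extension} along a free boundary whose graph is $\bar u_k(\cdot-he,0,\cdot)+\varphi+c_k$. Inf-convolving $\bar u_k(\cdot-he)$ makes this graph semiconcave, but it does \emph{not} let you compute or bound $|D\psi_k|$ on $\G_{\psi_k}$; that quantity depends on the whole harmonic extension, not just on the boundary parametrization. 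So you never verify that $\psi_k$ is actually a comparison subsolution, and you cannot evaluate the free boundary relation against $u_k$. A second, related problem is that the difference $\d_{he}$ couples the values of $\bar u_k$ at \emph{two} shifted points, so the coefficient that emerges is not a single $|Du_k|^2$ but a product of two gradients at different free boundary points; your sketch does not explain how both are controlled.

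The paper resolves this by an entirely different mechanism. It does not build a barrier from the test function; instead it regularizes the two shifted copies of $u_k$ themselves via sup/inf convolutions $u_k^{\e_k,\tau}$, $u_{k,\e_k,\tau}$ (applied to $u_k$, not to $\bar u_k$) with carefully designed space--time paraboloids, and takes harmonic replacements $U^{\e_k,\tau}$, $U_{\e_k,\tau}$. These have $C^{1,1}$-from-one-side free boundaries with interior/exterior tangent balls of radius $\sim\e_k^{-1}\xi/N_k$, and that is what delivers the Hopf bounds $\l\leq|D^{NT}|\leq\L$. The test polynomial $P=Q+L$ is then \emph{absorbed geometrically}: a Kelvin transform about a sphere of radius $\r_k=(B\e_k)^{-1}$ changes the hodograph of $V^k$ by approximately the harmonic quadratic $Q$ (Lemma~\ref{lem:kelvin}), and a conformal map $e^{-\e_k M}$ with $M=p_n\mathrm{Id}+e_n\otimes p'-p'\otimes e_n$ changes the hodograph of $V_k$ by approximately the linear part $L$ (Lemma~\ref{lem:rotation}). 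Both transformations preserve harmonicity, so the transformed functions $\widetilde V^k$, $\widetilde V_k$ are still harmonic in their positivity sets, and after a final translation their free boundaries contact at a common regular point. At that point Lemma~\ref{lem:pt_wise_eval} allows the free boundary inequalities to be evaluated \emph{classically}, and the change-of-variables formulas for speed and gradient under the conformal maps produce exactly $(s-\eta)\leq\sup_{a\in[\l,\L]}a(p_n+\eta)$ after dividing by $\e_k$ and sending $k\to\infty$. The conformal deformation step is the missing idea in your outline; without it there is no way to turn the polynomial test function into a comparison between two genuine (sub/super)solutions whose gradients you can actually bound.
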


The following Lemma shows that for $t$ fixed $w(\cdot,t)$ is harmonic. We assume without loss of generality that $t=0$ and ignore the time dependence. Here the hodograph $\bar u$ is constructed as in Section \ref{sec:prelim} but keeping $t=0$ fixed. This gives a multivalued function for which each one of its set values is a subset of the original hodograph defined using approximating sequences in space and \textit{time}. The hypothesis assumed in Theorem \ref{thm:lim_eq} are inherited for this construction.

\begin{lemma}\label{lem:lim_eq_elliptic}
Let $\e_k\to0$, $\mathcal R_k\to\8$, $u_k\in C(B_{\mathcal R_k}\to [0,\8))$ harmonic in $B_{\mathcal R_k}\cap\{u_k>0\}$ and $\bar u_k$ the hodograph transform with respect to $\e_k$
\begin{align}
\label{eq:implicit_hodograph}
u_k(x-\e_k\bar u_k(x)e_n) = x_n;
\end{align}
let $e\in\p B_1^{n-1}$ and $h>0$.

Given that
\begin{align*}
&\e_k^{1/2}\|\bar u_k\|_{L^\8\1B_R^+\2} \xrightarrow[k\to\8]{} 0 \qquad \text{ for any $R>0$}.\\
&\d_{he}\bar u_k \xrightarrow[k\to\8]{} w \qquad \text{locally uniformly in $\R^n_+$},
\end{align*}
then $w$ is a harmonic function in $\R^n_+$.
\end{lemma}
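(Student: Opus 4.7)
The strategy is to identify $w$ as a locally uniform limit of harmonic functions built from the shifted solutions $u_k^\pm(y):=u_k(y\pm(h/2)e)$. Each $u_k^\pm$ is harmonic on $\{u_k^\pm>0\}$, so the difference quotient $w_k:=(u_k^+-u_k^-)/\varepsilon_k$ is harmonic on $\{u_k^+>0\}\cap\{u_k^->0\}$. If this intersection exhausts $\R^n_+$ and $w_k\to w$ locally uniformly, then $w$ is harmonic on $\R^n_+$ by the stability of harmonicity under locally uniform limits.

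My first step would be to upgrade the flatness hypothesis to a uniform $C^1$ estimate on $u_k$. Rewriting \eqref{eq:implicit_hodograph} as $u_k(y)=y_n+\varepsilon_k\bar u_k(y',u_k(y))$ on $\{u_k>0\}$ and using $\varepsilon_k^{1/2}\|\bar u_k\|_{L^\infty(B_R^+)}\to 0$ yields $\|u_k-y_n\|_{L^\infty(\{u_k>0\}\cap B_R)}=o(\varepsilon_k^{1/2})$, and consequently the free boundary of $u_k$ lies in a layer of thickness $o(\varepsilon_k^{1/2})$ around $\R^{n-1}$. Hence $\{u_k>0\}$ contains any fixed compact $K\subset\R^n_+$ for $k$ large. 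Interior gradient estimates applied to the harmonic function $u_k-y_n$ on a slightly larger compact then give $\|Du_k-e_n\|_{L^\infty(K)}=o(\varepsilon_k^{1/2})$, so in particular $\partial_n u_k>1/2$ on $K$ for large $k$. The inverse function theorem applied to $u_k(y',\cdot)$ makes $\bar u_k$ a single-valued smooth function on the interior of $\R^n_+$, with $\varepsilon_k D\bar u_k\to 0$ locally uniformly.

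Next I would use the hodograph identities $u_k^\pm(y)=y_n+\varepsilon_k\bar u_k^\pm(y',u_k^\pm(y))$, where $\bar u_k^\pm(x)=\bar u_k(x\pm(h/2)e)$, to relate $w_k$ to $\delta_{he}\bar u_k$. Subtracting the two identities at $y$ and inserting $\bar u_k^+(y',u_k^+)-\bar u_k^-(y',u_k^-)=[\bar u_k^+(y',u_k^+)-\bar u_k^+(y',u_k^-)]+\delta_{he}\bar u_k(y',u_k^-)$, the mean value theorem in the last coordinate gives
\begin{align*}
w_k(y)=\frac{\delta_{he}\bar u_k(y',u_k^-(y))}{1-\varepsilon_k\partial_n\bar u_k^+(y',\xi)}
\end{align*}
for some $\xi$ between $u_k^+(y)$ and $u_k^-(y)$. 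By the previous step the denominator tends to $1$ uniformly on compacts, and $u_k^-\to y_n$ uniformly on compacts; by hypothesis $\delta_{he}\bar u_k\to w$ locally uniformly, and since $w$ is continuous the convergent family is equicontinuous on compacts, so $\delta_{he}\bar u_k(y',u_k^-(y))\to w(y)$ uniformly on compacts. Therefore $w_k\to w$ locally uniformly on $\R^n_+$, and $w$ is harmonic there.

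The principal obstacle is the first step: converting the $L^\infty$ flatness hypothesis into $C^1$ closeness of $u_k$ to $y_n$ on any interior compact of $\R^n_+$. Everything downstream (the classical meaning of $\bar u_k$, the bound $\varepsilon_k D\bar u_k\to 0$, and the exhaustion of $\R^n_+$ by the positivity sets) relies on this upgrade. The power $\varepsilon_k^{1/2}$ in the hypothesis is precisely what one needs to feed into interior gradient estimates for harmonic functions with $L^\infty$ oscillation of order $o(\varepsilon_k^{1/2})$.
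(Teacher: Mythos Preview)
Your argument is correct, and it takes a genuinely different route from the paper's own proof. Both proofs share the same opening move: interior gradient estimates for the harmonic function $u_k-y_n$ upgrade the flatness $\varepsilon_k^{1/2}\|\bar u_k\|_{L^\infty}\to 0$ to $\|Du_k-e_n\|_{L^\infty(K)}\to 0$ on interior compacts, which forces $\partial_n u_k>0$, makes $\bar u_k$ single-valued and smooth there, and gives $\varepsilon_k D\bar u_k\to 0$. From that point the two arguments diverge. The paper implicitly differentiates the hodograph relation twice to derive an explicit elliptic PDE for $\bar u_k$ of the form $\Delta\bar u_k+b_k\Delta_{\R^{n-1}}\bar u_k=f_k$, then applies $\delta_{he}$ to obtain $\Delta(\delta_{he}\bar u_k)+c_k\Delta_{\R^{n-1}}(\delta_{he}\bar u_k)=F_k$ with $\|c_k\|_{L^\infty}+\|F_k\|_{L^\infty}\to 0$, and concludes by stability for elliptic equations with vanishing lower-order terms. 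Your approach instead exploits that the quotient $w_k=(u_k^+-u_k^-)/\varepsilon_k$ is already exactly harmonic on the overlap of the positivity sets, and uses the hodograph identity plus the mean value theorem to show $w_k\to w$ locally uniformly.

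Your route is more elementary: it avoids the second-order implicit differentiation and the bookkeeping of the coefficients $b_k,f_k,c_k,F_k$, needing only first derivatives of $u_k$ and the stability of harmonicity under uniform limits. The paper's route, on the other hand, produces the explicit equation for $\bar u_k$ (this is the equation labeled \eqref{eq:laplacian}), which is referenced elsewhere in the paper as the interior counterpart of the free boundary linearization and makes the structure of the $\varepsilon\to 0$ limit transparent at the level of the PDE itself. For the bare statement of the lemma your argument is cleaner; the paper's computation is doing double duty as an illustration of the linearization mechanism.
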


\begin{remark}For the next proof we use the following notation for $e\in\p B_1^{n-1}$ and $h>0$, 
\[
\mathcal B_{\r}(X_0) := B_{\r}(X_0)\cup B_{\r}(x_0+(h/2)e)\cup B_{\r}(X_0-(h/2)e).
\]
\end{remark}

\begin{proof}
Let us fix $x_0\in\R^n_+$, $\r=(x_0)_n/5$, $R=10\r$, and
\[
X_0 = x_0 -\e \bar u_k(x_0)e_n.
\]
For $k$ sufficiently large $\r>\e_k\|\bar u_k\|_{L^\8\1B_R^+\2}$, which implies $\mathcal B_{4\r}(X_0)\ss \{u_k>0\}$. Then $u_k$ is harmonic in $\mathcal B_{4\r}(X_0)$ and
\begin{align*}
\r\|Du_k-e_n\|_{L^\8(\mathcal B_{3\r}(X_0))} + \r^2\|D^2u_k\|_{L^\8(\mathcal B_{3\r}(X_0))} \leq C\e_k\|\bar u_k\|_{L^\8\1B_R^+\2}.
\end{align*}
For $k$ sufficiently large we get that $u_k$ is increasing in the $e_n$ direction, therefore $\bar u_k$ is single valued and smooth in $\mathcal B_{2\r}(x_0)$. By implicitly differentiating \eqref{eq:implicit_hodograph} we obtain,
\begin{align*}
\D_{\R^{n-1}}u_k &= \frac{1}{\p_n u_k}\12 D_{\R^{n-1}} \p_n u_k - \frac{\p_n^2u_k}{\p_n u_k} D_{\R^{n-1}} u_k\2\cdot D_{\R^{n-1}} u_k + \e_k\p_n u_k\D_{\R^{n-1}}\bar u_k,\\
\p_n^2u_k &= \e_k(\p_nu_k)^{3}\p_n^2\bar u_k.
\end{align*}
Form the harmonicity of $u_k$,
\begin{align}
\label{eq:laplacian} &\D\bar u_k+b_k(x)\D_{\R^{n-1}}\bar u_k= f_k(x),\\
\nonumber &b_k(x) := \left.\1\1\frac{1}{\p_n u_k}\2^{2}-1\2\right|_{x-\e_k\bar u_k(x)e_n},\\
\nonumber &f_k(x) := \left.\1-\frac{1}{\e_k\p_n u_k^4}\12 D_{\R^{n-1}} \p_n u_k - \frac{\p_n^2u_k}{\p_n u_k} D_{\R^{n-1}}u_k\2\cdot D_{\R^{n-1}} u_k\2\right|_{x-\e_k\bar u_k(x)e_n}.
\end{align}
Taking $\d_{he}$ we obtain the following relation in $B_{2\r}(x_0)$,
\begin{align*}
&\D\1\d_{he} \bar u_k\2+c_k(x)\D_{\R^{n-1}}\1\d_{he} \bar u_k\2= F_k(x)\\
&c_k(x) := b_k(x+(h/2)e)\\
&F_k(x) := \d_{he} f_k(x)-\1\d_{he} b_k(x)\2\D_{\R^{n-1}}\bar u_k(x-(h/2)e)
\end{align*}
By the interior estimates for $u$ in $\mathcal B_{3\r}(X_0)$ we know that the coefficients above satisfy,
\begin{align*}
&\|c_k\|_{L^\8(B_{\r}(x_0))} + \|F_k\|_{L^\8(B_{\r}(x_0))} \leq \xi_k := C(\r)\e_k\1\|\bar u_k\|_{L^\8\1B_R^+\2}^2+1\2 \xrightarrow[k\to\8]{} 0.
\end{align*}
Then,
\begin{align*}
|\D\1\d_{he}\bar u_k\2| - \xi_k|\D_{\R^{n-1}}\1\d_{he}\bar u_k\2| \leq \xi_k.
\end{align*}
The harmonicity of $w$ in $B_\r(x_0)$ now follows from the standard stability for elliptic equations.
\end{proof}

The challenging part of Theorem \ref{thm:lim_eq} is obtaining the linearization of the free boundary relation. Unlike the previous lemma, there may not be sufficient regularity of the solution that would allow us to evaluate the equations for $\bar u$ at every point, which complicates obtaining an equation for the difference of the solution and its translate. This is a delicate issue appearing throughout the theory of viscosity solutions that fortunately has been well understood since work of Jensen \cite{jensen}. The idea is to approximate the solution by inf/sup convolutions which enjoy the following useful properties:
\begin{enumerate}[label=(\alph*)]
\item They become sub and supersolutions.
\item They allow to evaluate the free boundary condition in a classical sense at suitable regular points, which are a set of full measure.
\item They approximate the original solution.
\end{enumerate}
An additional advantage that we obtain with the construction is that it allows to control also the ``curvature'' of the (regularized) free boundaries, which yields control over $|Du|$.

\subsection{Inf and Sup Convolutions}\label{sec:inf_conv}

Let $\e\in(0,1)$, $R\geq 1$ and $\mathcal{R} > R$ be a sufficiently large radius that will change in each statement in order to allow enough room for the constructions and results of this section. The reader should keep in mind that once a property gets established for $R$ then those attributes can be immediately assumed for $\mathcal R$ in the following steps by replacing the original $\mathcal R$ by a even larger radius if necessary.

Let $u(\cdot,t) \in C(B_\mathcal{R}(-te_n)\to[0,\8))$ and $\bar u:\bar\R^n_+\to P(\R)$ its hodograph transform with respect to $\e>0$ such that
\begin{align*}
\e N \in (0,1) \qquad\text{ where } \qquad N:=\|\bar u\|_{L^\8\1\bar Q_{R}^+\2}+1.
\end{align*}
This implies that for $t\in(R,0]$ and $x\in B_R(-te_n)$
\begin{align}\label{eq:flatness_envelope}
(x_n+t-\e N)_+ \leq u(x,t) \leq (x_n+t+\e N)_+.
\end{align}
(To be precise, $\e N \in (0,1)$ implies the previous inequalities with a radius smaller that $R$, for instance $(R-1)$. This is the type of technicalities that we talk about in the first paragraph and omit in subsequent steps).

In this setting we construct for $\xi, \t \in(0,1)$:
\begin{align*}
u^{\xi, \t}(x,t) &:= \sup_{\substack{t\in\R\\y \in P^{\xi, \t}(s)}}u^*(x+y,t+s),\\
u_{\xi, \t}(x,t) &:= \inf_{\substack{t\in\R\\y \in P_{\xi, \t}(s)}}u_*(x+y,t+s),
\end{align*}
where,
\begin{align*}
P^{\xi, \t}(s) &:= \left\{(y',y_n)\in\R^n:y_n \leq - 2\e N\1\frac{1}{\xi}|y'|^2+\frac{1}{\t}s^2\2-s\right\},\\
P_{\xi, \t}(s) &:= \left\{(y',y_n)\in\R^n:y_n \geq 2\e N\1\frac{1}{\xi}|y'|^2+\frac{1}{\t}s^2\2-s\right\}.
\end{align*}

Recall that $u^*$ and $u_*$ denote the upper and lower semicontinuous envelopes of $u$ defined in \eqref{eq:semi_cont_env}. The main idea behind the construction of $u^{\xi, \t}$ is that for each level set of $u$ we are taking a type of sup-convolution of by touching them with paraboloids from the zero set. See Figure \ref{fig:convolution}.
\begin{figure}[t!]
\begin{center}
\includegraphics[width=12cm]{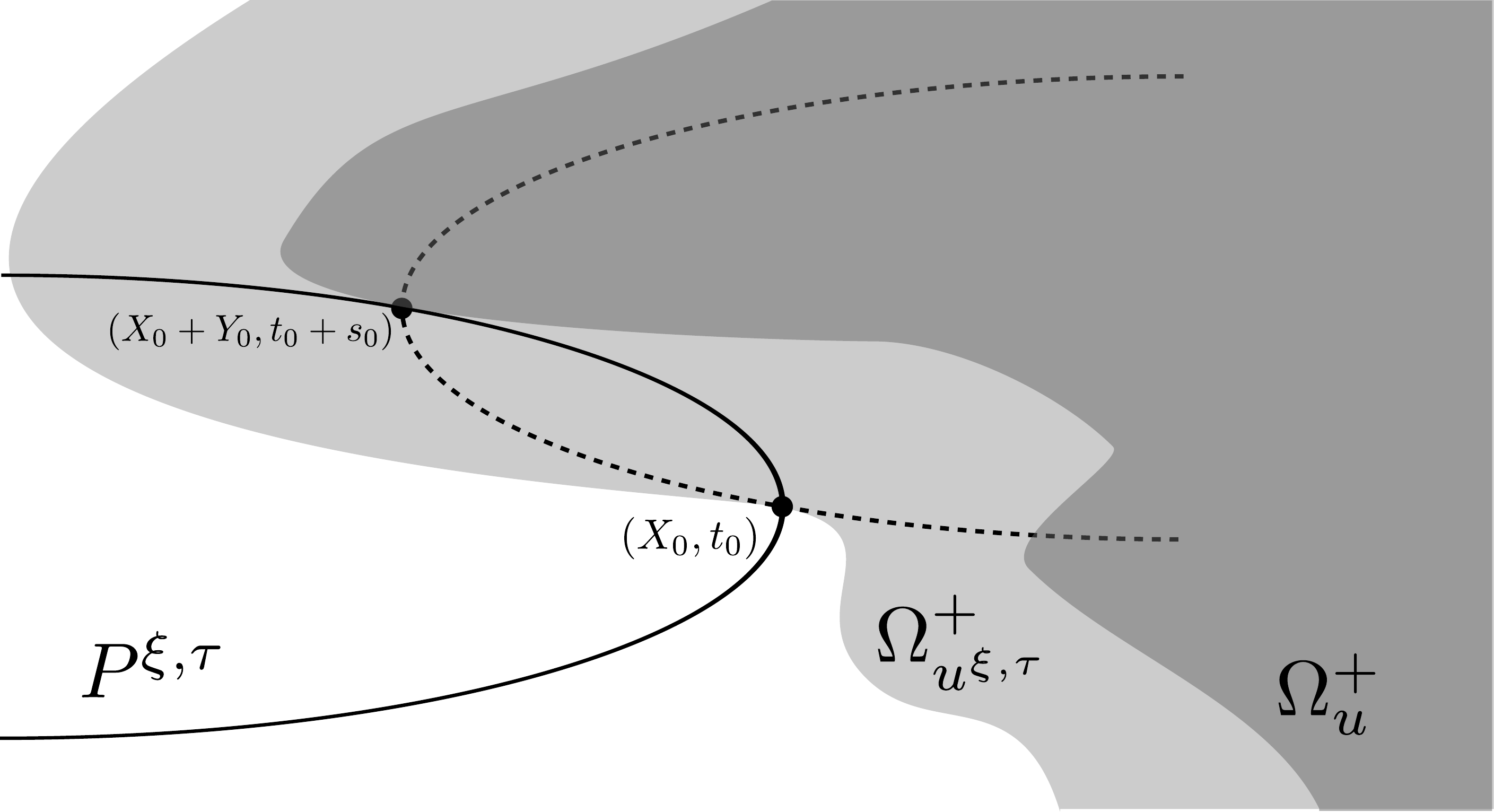}
\end{center}
\caption{Sup convolution of the free boundary.}
\label{fig:convolution}
\end{figure}

\begin{lemma}\label{lem:convolution} Under the flatness hypothesis \eqref{eq:flatness_envelope} we get that:
\begin{enumerate}[label=(\alph*)]
\item\label{flat} \textbf{Flatness:} For $t\in(-R,0]$ and $x\in B_R(-te_n)$,
\[
(x_n+t-\e N)_+ \leq u^{\xi,\t}(x,t) \leq (x_n+t+\e N)_+.
\]
\item\label{dual_pt} \textbf{Dual point:} For $t_0\in(-R,0]$ and $x_0 \in \W^+_{u^{\xi, \t}}(t_0) \cap B_{R}(-t_0e_n)$ let $(y_0,s_0) = (y_0',(y_0)_n, s_0)$ be a point where the supremum in the construction of $u^{\xi,\t}(x_0,t_0)$ is realized. Then 
\[
|s_0|<\t^{1/2}, \qquad |y_0'| < \xi^{1/2}, \qquad |(y_0)_n| < 2\e N, \qquad y_0 \in \p P^{\xi, \t}\1s_0\2,
\]
and
\[
u^{\xi, \t}(\cdot-y_0,\cdot-s_0) \text{ touches $u^*$ from above at $(x_0+y_0,t_0+s_0)$.}
\]
In particular, by considering the limiting case as $x_k \to x_0 \in \G_{u^{\xi,\t}}(t_0)$ we recover the same property for $x_0 \in \G_{u^{\xi, \t}}(t_0)$ with $(x_0+y_0) \in \G_{u^*}(t_0+s_0)$.
\item\label{eq_conv} \textbf{Equations:} If $u=u^*$ is a viscosity subsolution of the Hele-Shaw problem then $u^{\xi, \t}$ is also a viscosity subsolution in $B_{R}(-te_n)$.
\item\label{hod_conv} \textbf{Hodograph:} The hodograph $\bar u^{\xi, \t}$ is single valued in $Q_{R}^+$. Moreover, for every $x_n\in[0,R)$, $\bar u^{\xi, \t}(\cdot,x_n,\cdot)$ can be also computed as a sup-convolution of $\bar u^*(\cdot,x_n,\cdot)$,
\begin{align}\label{eq:hodograph_conv}
\bar u^{\xi,\t}(x',x_n,t) = \sup_{\substack{|s| <\t^{1/2}\\ |y'|<\xi^{1/2}}} \bar u(x'+y',x_n,t+s) - 2N\1\frac{1}{\xi}|y'|^2+\frac{1}{\t}s^2\2.
\end{align}
\item\label{regularity_sup_conv} \textbf{$C^{1,1}$ Regularity from below:} For $(x_0',(x_0)_n,t_0) \in Q_{R}^+$, let $(y_0',s_0)$ a point where the supremum in \eqref{eq:hodograph_conv} is realized, then we have that for all $(x',t) \in Q_{R}^{n-1}$,
\[
\bar u(x_0'+y_0',(x_0)_n,t_0+s_0) - 2N\1\frac{1}{\xi}|x'-(x_0'+y_0')|^2 + \frac{1}{\t}(t-(t_0+s_0))^2\2 \leq \bar u^{\xi,\t}(x',(x_0)_n,t).
\]
\item\label{lipschitz_conv}\textbf{Lipschitz regularity:} The hodograph $\bar u^{\xi,\t}(\cdot,0,t)$ is Lipschitz continuous
\begin{align*}
\sup_{t\in(-R,0]}[\bar u^{\xi,\t}(\cdot,0,t)]_{C^{0,1}\1B_{R}^{n-1}\2} \leq \frac{2N}{\xi^{1/2}} 
\end{align*}
In particular, for every $t\in(-R,0]$, $\W^+_{u^{\xi,\t}}(t)\cap B_{R}(-te_n)$ is a Lipschitz domain.
\item\label{rate}\textbf{Rate of convergence:} For $(x',x_n,t)\in \bar Q_R^+$,
\[
\bar u(x',x_n,t) \leq \bar u^{\xi,\t}(x',x_n,t) \leq \sup_{\substack{ |s|<\t^{1/2}\\|y'| < \xi^{1/2}}} \bar u(x'+y',x_n,t+s).
\]
As $\t\to0$, $\bar u^{\xi,\t}$ decreases pointwise to $\bar u^{\xi,0}$ where,
\[
\bar u(x',x_n,t) \leq u^{\xi,0}(x',x_n,t) := \sup_{\substack{ |y'| < \xi^{1/2}}} \bar u(x'+y',x_n,t) - \frac{2N}{\xi}|y'|^2 \leq \sup_{\substack{ |y'| < \xi^{1/2}}} \bar u(x'+y',x_n,t)
\]
As a function in space and time, $\bar u^{\xi,0}$ is still upper semicontinuous.
\end{enumerate}
\end{lemma}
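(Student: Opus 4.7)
The seven properties all flow from the sup-convolution structure of $u^{\xi,\t}$ against the family of downward-opening paraboloids $\p P^{\xi,\t}(s)$, whose curvature scales are $\e N/\xi$ tangentially in space and $\e N/\t$ in time. My plan is to extract each property in turn, starting from the flatness bounds and ending with the regularity statements.

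For (a), every admissible pair $(y,s)$ with $y\in P^{\xi,\t}(s)$ satisfies $y_n+s\leq -2\e N(|y'|^2/\xi+s^2/\t)\leq 0$, so combining with the upper flatness bound on $u^*$ gives $u^*(x+y,t+s)\leq (x_n+t+\e N)_+$, and the lower estimate is immediate from $(y,s)=(0,0)$ being admissible. For (b), at a maximizer $(y_0,s_0)$ one must have $y_0\in\p P^{\xi,\t}(s_0)$, for otherwise increasing $(y_0)_n$ would strictly increase the value using the monotonicity of $u^*$ in $e_n$ above its zero set (itself a consequence of flatness). The flatness lower bound at $(x_0,t_0)$ combined with the upper bound at $(x_0+y_0,t_0+s_0)$ forces $(y_0)_n+s_0\geq -2\e N$, which together with the boundary equality $(y_0)_n+s_0=-2\e N(|y_0'|^2/\xi+s_0^2/\t)$ gives $|y_0'|^2/\xi+s_0^2/\t\leq 1$, whence the stated bounds on $|s_0|$, $|y_0'|$, and $|(y_0)_n|$. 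The touching statement from above is immediate from the definition of the supremum.

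For (c), each translation $(x,t)\mapsto u^*(x+y,t+s)$ is a viscosity subsolution of the Hele-Shaw problem, and $u^{\xi,\t}$ is the upper semicontinuous envelope of a supremum of such subsolutions, hence remains a subsolution. For (d), the key observation is that $P^{\xi,\t}(s)$ is unbounded below in the $y_n$ direction, which forces $u^{\xi,\t}$ to be weakly monotone in $x_n$; combined with strict monotonicity of $u^*$ above its zero set, this yields single-valuedness of $\bar u^{\xi,\t}$ on $Q_R^+$. For the reduction formula \eqref{eq:hodograph_conv} I would fix $x_n\geq 0$ and observe that the level set $\{u^{\xi,\t}(\cdot,t)=x_n\}$ is obtained from $\{u^*=x_n\}$ by a sup-convolution in the tangential variables alone: the optimization over $y_n$ in the definition of $u^{\xi,\t}$ can be solved explicitly using the $e_n$-monotonicity, leaving only the tangential sup-convolution of $\bar u^*$ with the quadratic $2N(|y'|^2/\xi+s^2/\t)$.

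Properties (e), (f), and (g) then come out of standard sup-convolution manipulations applied to \eqref{eq:hodograph_conv}. (e) is the ``touches-from-below by a paraboloid'' identity obtained by inserting the shifted dual pair $(y_0'-(x'-x_0'),\,s_0-(t-t_0))$ into the supremum; (f) follows from (e) because a function lying above a family of paraboloids with fixed opening is Lipschitz, with constant bounded by the gradient of the paraboloid at the maximizer, namely $2N/\xi^{1/2}$; and (g) is direct from the definition, with monotonicity in $\t$ and upper semicontinuity of $\bar u^{\xi,0}$ being routine properties of a sup-convolution of upper semicontinuous data over a compact parameter set. The main obstacle is (d): carefully verifying that the ambient sup-convolution of $u^*$, upon passing to the hodograph, reduces exactly to a tangential sup-convolution of $\bar u^*$, which requires a clean treatment of the upper/lower semicontinuous envelopes as well as a precise reduction of the $y_n$-optimization in the presence of possible multi-valued branches of $\bar u$ away from the sup-convolution.
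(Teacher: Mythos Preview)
Your proposal is correct and follows essentially the same route as the paper: (a) from $y_n+s\le 0$ and flatness, (b) by combining the flatness bounds with the boundary equation of $\partial P^{\xi,\tau}$ to localize the dual point, (c) as a supremum of subsolutions with the free-boundary condition transferred via the dual point from (b), (d) by strict $e_n$-monotonicity plus a direct verification of the hodograph sup-convolution identity, and (e)--(g) as standard sup-convolution facts (the paper simply defers these to \cite{Caffarelli95}). One small correction: strict monotonicity of $u^*$ in $e_n$ is \emph{not} a consequence of flatness alone --- the paper obtains $y_0\in\partial P^{\xi,\tau}(s_0)$ and the strict $e_n$-monotonicity of $u^{\xi,\tau}$ from the (strict) maximum principle for the subharmonic $u(\cdot,t)$ in its positivity set, so your justifications in (b) and (d) should invoke the maximum principle rather than flatness.
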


\begin{remark}
The construction of $\bar u$ using approximating sequences includes its own semicontinuous envelopes. Therefore, the supremum in \eqref{eq:hodograph_conv} is achieved.
\end{remark}

\begin{proof}
Given that $P^{\xi,\t}(s) \ss \{y_n\leq -(\e N/\t)s^2-s\}$ we get the following bounds for $u^{\xi,\t}$ coming from the flatness hypothesis for $u$,
\begin{align}\label{eq:flat}
\1x_n-\frac{2\e N}{\t}s^2 +t - \e N\2_+ &\leq 
\sup_{y\in P^{\xi,\t}(s)} u^*(x+y,t+s),\\
\nonumber &\leq \1x_n-\frac{2\e N}{\t}s^2+t+ \e N\2_+.
\end{align}
This immediately implies both bounds for part \ref{flat}.

For part \ref{dual_pt} we notice that if $|t|\geq \t^{1/2}$ implies that,
\begin{align*}
\1P^{\xi,\t}(t)+x_0\2\cap B_R(-(t_0+t)e_n) &\ss \{x_n < (x_0)_n-t-2\e N\}\cap B_R(-(t_0+t)e_n)\\
&\ss \{x\in B_R(-(t_0+t)e_n):u^{\xi,\t}(x,t_0+t)<u^{\xi,\t}(x_0,t_0)\}.
\end{align*}
The last inclusion follows from the flatness already proven for $u^{\xi,\t}$. Therefore $u^{\xi,\t}(x+x_0,t_0+t)<u^{\xi,\t}(x_0,t_0)$ for $|t|\geq \t^{1/2}$ and $x\in P^{\xi,\t}(t)$ implies that $|s_0|<\t^{1/2}$. From the flatness we also get that,
\begin{align*}
x_0+y_0 &\in (P^{\xi,\t}(s_0)+x_0) \cap \{x\in B_R(-(t_0+s_0)e_n):u^{\xi,\t}(x,t_0+s_0)\geq u^{\xi,\t}(x_0,t_0)\},\\
&\ss (P^{\xi,\t}(s_0)+x_0) \cap \{x_n \geq (x_0)_n-s_0-2\e N\},\\
&\ss \{(z',z_n) \in \R^n: |z'-x_0'| < \xi^{1/2}, \ |z_n-(x_0)_n|<2\e N\}
\end{align*}
The fact that $y_0\in\p P^{\xi,\t}(s_0)$ follows from the maximum principle applied to the harmonic function $u(\cdot,s_0)$. Finally, the inequality $u^{\xi,\t}(x-y_0,t-s_0) \geq u(x,t)$ follows from the construction because,
\begin{align*}
u^{\xi,\t}(x-y_0,t-s_0) = \sup_{\substack{s\in\R\\z\in P^{\xi,\t}(s)}} u^*(x-y_0+y,t-s_0+s) \underset{\substack{s=s_0\\y=y_0\in P^{\xi,\t}(s_0)}}{\geq} u^*(x,t).
\end{align*}

For part \ref{eq_conv} we get that the function $u^{\xi,\t}(\cdot,t)$ is subharmonic because it is the supremum of subharmonic functions. The free boundary equation follows as a consequence of part \ref{dual_pt}. A test function $\varphi$ that touches $u^{\xi,\t}$ from above at $x_0 \in \G_{u^{\xi,\t}}(t_0)$ can be translated by $(y_0,s_0)$ to a get a test function that touches $u^* = u$ from above at $(x_0+y_0) \in \G_{u}(t_0+s_0)$.

For part \ref{hod_conv}, we see that $\bar u^{\xi,\t}$ is single valued if and only if $u^{\xi,\t}(\cdot,t)$ is strictly increasing in the $e_n$ direction, which follows by the strict maximum principle. We prove now that $\bar u^{\xi,\t}(x,x_n,t)$, originally defined as the hodograph of $u^{\xi,\t}$ evaluated at $(x,t)= (x',x_n,t)\in \bar Q^+_R$, can be computed also as a sup-convolution of $\bar u(\cdot,c,\cdot)$ at $(x',t)$. Let,
\[
X = x-(t+\e\bar u^{\xi,\t}(x,t))e_n.
\] 
We obtain that,
\begin{align*}
(P^{\xi,\t}(s)+X)\cap B_R(-(t+s)e_n) &\ss \{Y \in B_{R}(-(t_0+s)e_n):u^*(Y,t+s)\leq u^{\xi,\t}(X,t) = x_n\},\\
&\ss \{Y \in B_{R}(-(t+s)e_n):u(Y,t+s)\leq x_n\}
\end{align*}
In terms of the hodograph it implies that for $y' \in \R^{n-1}$,
\[
\bar u^{\xi,\t}(x,t) +2N\1\frac{1}{\xi}|y'|^2+\frac{1}{\t}s^2\2 \geq \bar u(x'+y',c,t+s).
\]
Hence,
\[
\bar u^{\xi,\t}(x,t) \geq \sup_{s,y'} \bar u(x'+y',c,t+s) -2N\1\frac{1}{\xi}|y'|^2+\frac{1}{\t}s^2\2.
\]
Part \ref{dual_pt} provides the desired equality and the bounds for $s$ and $y'$.

The remaining properties are now well known results for the sup-convolution and can actually be proven using arguments similar to the ones we have already given so far. See for instance, Chapter 5 in \cite{Caffarelli95}.
\end{proof}

The next step is to consider for each $t\in(-R,0]$, the harmonic replacement of the subharmonic function $u^{\xi,\t}(\cdot,t)$ in $\W_{u^{\xi,\t}}^+(t)\cap B_{R}(-te_n)$. The following Dirichlet problem can be solved in the classical sense given that $\W^+_{u^{\xi,\t}}(t)\cap B_{R}(-te_n)$ is a Lipschitz domain,
\begin{alignat*}{3}
\D U^{\xi,\t}(\cdot,t) &= 0 \qquad&&\text{ in }\qquad &&\W_{u^{\xi,\t}}^+(t)\cap B_R(-te_n),\\
U^{\xi,\t}(\cdot,t) &= u^{\xi,\t}(\cdot,t) \qquad&&\text{ on }\qquad &&\p\1\W_{u^{\xi,\t}}^+(t)\cap B_R(-te_n)\2,
\end{alignat*}

Notice that $U^{\xi,\t}(\cdot,t)$ and $\bar U^{\xi,\t}(\cdot,t)$ inherit some of the main characteristics of $u^{\xi,\t}(\cdot,t)$ and $\bar u^{\xi,\t}(\cdot,t)$, namely:
\begin{enumerate}[label=(\alph*)]
\item The flatness property \ref{flat} remains the same for $U^{\xi,\t}(\cdot,t)$.
\item $\bar U^{\xi,\t}(\cdot,t)$ is single valued. This follows from the monotonicity of the boundary data in the $e_n$ direction.
\item $\bar U^{\xi,\t}(\cdot,0,\cdot) = \bar u^{\xi,\t}(\cdot,0,\cdot)$ is $C^{1,1}$ regular from below.
\item The rate of convergence remains the same thanks to the comparison principle.
\end{enumerate}

The advantage of having now a harmonic function is that it rules out any possible degenerate growth of the function at regular points of the free boundary which we define next.

Let $U:\W\to[0,\8)$ be a harmonic function in $\W^+_U$. We say that $X_0 \in \G_U$ is a regular point from the positive set if there exists a ball
\begin{align*}
&B_{r^+}(P^+)\cap \W \ss \W_U^+\text{ such that } X_0 \in \bar B_{r^+}(P^+) \cap \G_U.
\end{align*}
It is well known that in this case $U$ has a precise linear asymptotic behavior along nontangential regions of $\W_U^+$ about $X_0$, see \cite[Section 11.6]{salsa}. In other words, there exists a number $|D^{NT}U(X_0)| \in (0,\8]$ such that for $X \in \W_U^+$ approaching $X_0$ non tangentially,
\begin{align*}
U(X) =  |D^{NT}U(X_0)|(X-X_0)\cdot \nu + o(|X-X_0|),\qquad\text{where}\qquad \nu := \frac{P^+-X_0}{r^+}.
\end{align*}
Moreover, by adding a hypothesis of the form 
\[
U \geq (x_n-((P^+)_n-d))_+ \text{ where $d\in[r^+/2,r^+]$}
\]
we get by a standard barrier argument that for some universal constant $\l>0$,
\[
|D^{NT}U(X_0)|\geq \l.
\]

For $U^{\xi,\tau}(\cdot,t_0)$ we get that every point $X_0 \in \G_{U^{\xi,\tau}}(t_0)$ is regular from the positive set with respect to a ball of radius $r_+ \sim \xi/(2N\e)$. Therefore, by making $\xi = \e$ and assuming that $N^2\e$ is sufficiently small we recover a universal bound from below for $|D^{NT}U^{\xi,\t}|$.

Similar conclusions hold if we assume that $X_0 \in \G_U$ is a regular point from the zero set. In other words, there exists a ball
\begin{align*}
&B_{r^0}(P^0)\cap \W \ss \W_U^0\text{ such that } X_0 \in \bar B_{r^0}(P^0) \cap \G_U.
\end{align*}
Once again there exists a number $|D^{NT}U(X_0)|\in [0,\8)$ such that for $X \in \W_U^+$ approaching $X_0$ non tangentially,
\begin{align*}
U(X) =  |D^{NT}U(X_0)|(X-X_0)\cdot \nu + o(|X-X_0|),\qquad\text{where}\qquad \nu := \frac{X_0-P^0}{r^0}.
\end{align*}
Moreover, for some universal constant $\L>0$
\[
U\leq (x_n-((P^0)_n+d))_+ \text{ where $d\in[r^0/2,r^0]$} \qquad \Rightarrow\qquad |D^{NT}U(X_0)|\leq \L.
\]
We should keep this in mind for $U_{\xi,\tau}(\cdot,t)$ defined as the harmonic replacement of $u_{\xi,\tau}(\cdot,t)$ in $B_R(-te_n)$.

At this point we have we have that we can evaluate at least the spatial ingredient from the free boundary relation for $U^{\xi,\t}$. Notice also that the free boundary gets parametrized by,
\[
\G_{U^{\xi,\t}}(t) = \{(x',x_n,t) \in B_R(-te_n): x_n = -t-\e\bar U^{\xi,\t}(x',0,t)\}.
\]

Recall from Definition \ref{def:gamma_reg} that $x_0 = (x_0',(x_0)_n) \in \G_{U^{\xi,\tau}}(t_0)$ is a regular point in space time if $\bar U^{\xi,\t}$ punctually $C^{1,1}$ at $(x_0',0,t_0)$. In other words, there exist $D_{\R^{n-1}}\bar U^{\xi,\t}(x_0',0,t_0)\in\R^{n-1}$ and $\p_t\bar U^{\xi,\t}(x_0',0,t_0)\in \R$ such that,
\begin{align*}
\bar U^{\xi,\t}(x',0,t) &= P(x',t) + O(|x'-x_0'|^2+|t-t_0|^2),\\
P(x',t) &:= \bar U^{\xi,\t}(x'_0,t_0)+D_{\R^{n-1}}\bar U^{\xi,\t}(x_0',0,t_0)\cdot(x'-x_0')+\p_t\bar U^{\xi,\t}(x_0',0,t_0)(t-t_0).
\end{align*}
Recall that in Section \ref{sec:prelim} we defined the speed of the interphase whenever this can be parametrized by a function which is punctually first order differentiable at a given point.

\begin{lemma}\label{lem:pt_wise_eval}
If $x_0 = (x_0',(x_0)_n) \in \G_{U^{\xi,\tau}}(t_0)$ is a regular point in space time then the free boundary relation can be evaluated in the following classical sense,
\[
\frac{\p_t U^{\xi,\t}}{|DU^{\xi,\t}|}(x_0,t_0) \leq |D^{NT}U^{\xi,\t}(x_0,t_0)|.
\]
\end{lemma}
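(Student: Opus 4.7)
The plan is to argue by contradiction using the viscosity subsolution property of $u^{\xi,\t}$ established in Lemma \ref{lem:convolution}\ref{eq_conv}. Since $U^{\xi,\t}\ge u^{\xi,\t}$ with equality on the shared free boundary (as $U^{\xi,\t}$ is the harmonic replacement of the subharmonic $u^{\xi,\t}$), any classical comparison supersolution $\varphi$ of Hele-Shaw touching $U^{\xi,\t}$ from above at $(x_0,t_0)$ also touches $u^{\xi,\t}$ from above at the free boundary point $(x_0,t_0)\in\G_{u^{\xi,\t}}$, contradicting the subsolution property.

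Set $V:=|D^{NT}U^{\xi,\t}(x_0,t_0)|$ and $S:=\p_tU^{\xi,\t}/|DU^{\xi,\t}|(x_0,t_0)$, and suppose $S>V$. After normalizing $(x_0,t_0)=(0,0)$ with outward normal $-e_n$, the regular-space-time-point hypothesis provides $\gamma(x',t)=St+O(|x'|^2+t^2)$ in the parametrization $\G=\{x_n=-\gamma(x',t)\}$. I pick $q\in(V,\sqrt{VS})$, which is possible because $S>V$. The supersolution $\varphi$ is constructed as a perturbation of the planar Hele-Shaw front $q(x_n+qt)_+$, bent to be strictly superharmonic on its positive set and to satisfy the classical supersolution relation $\p_t\varphi/|D\varphi|\ge|D\varphi|$ along its smooth FB; the bending parameters are tuned using the punctual $C^{1,1}$ expansion of $\gamma$ and the global $C^{1,1}$-from-below of the hodograph afforded by Lemma \ref{lem:convolution}\ref{regularity_sup_conv}, so that $\G_\varphi$ envelops $\G_{U^{\xi,\t}}$ at the quadratic level while still passing through the origin.

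The main technical point is verifying $\varphi\ge U^{\xi,\t}$ in a parabolic neighborhood of the origin: the non-tangential definition of $V=|D^{NT}U^{\xi,\t}|$ must be upgraded to a uniform pointwise expansion in the full neighborhood, including along tangential directions of $\G_{U^{\xi,\t}}$. I combine the Lipschitz regularity of $\W^+_{U^{\xi,\t}}(t)$ from Lemma \ref{lem:convolution}\ref{lipschitz_conv}, the punctual two-sided $C^{1,1}$ sandwich of the FB at the regular point, and the global paraboloid-from-below of the hodograph from Lemma \ref{lem:convolution}\ref{regularity_sup_conv}, together with standard boundary Harnack / Carleson estimates for harmonic functions on such domains, to obtain the uniform one-sided bound
\[
U^{\xi,\t}(X,t)\ \le\ V(x_n+St)+C(|x'|^2+t^2)
\]
near $(0,0)$. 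Subtracting and using $t\le 0$, $q>V$, $q^2<VS$, and the fact that on the positive set of $\varphi$ one has $x_n\ge -O(|x'|^2)$, yields
\[
\varphi(X,t)-U^{\xi,\t}(X,t)\ \ge\ (q-V)x_n+(q^2-VS)t+O(|x'|^2+t^2)\ \ge\ 0
\]
in a small enough parabolic neighborhood, where the strict margins dominate the quadratic error. This delivers the touching from above and hence the contradiction, so $S\le V$ as claimed.
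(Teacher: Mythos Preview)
Your overall strategy coincides with the paper's: construct a classical test function $\varphi$ that touches $U^{\xi,\tau}$ (hence $u^{\xi,\tau}$, since $U^{\xi,\tau}\ge u^{\xi,\tau}$ with the same free boundary) from above at the regular point, and read off the inequality from the viscosity subsolution property of $u^{\xi,\tau}$. The paper carries this out directly rather than by contradiction: for each $\eta>0$ it takes a paraboloidal region $D_{2B,\eta}(t)\subset\Omega^0_{U^{\xi,\tau}}(t)$ tangent to $\Gamma_{U^{\xi,\tau}}$ at $(x_0,t_0)$ (this uses only the punctual $C^{1,1}$ hypothesis), lets $d$ be the distance to $D_{2B,\eta}$, and sets $\varphi=(|D^{NT}U^{\xi,\tau}(x_0,t_0)|+\eta)\,d - C d^2$. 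For $C$ large this is superharmonic, its free boundary is $C^1$ with the correct speed at $(x_0,t_0)$, and the touching from above follows for small $r$; then one sends $\eta\to 0$.

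Your bent-planar-front construction leads to the same place but glosses over two points worth flagging. First, bending the free boundary of $q(x_n+qt)_+$ downward in $x'$ to envelop $\Gamma_{U^{\xi,\tau}}$ at the quadratic level makes the naive candidate \emph{subharmonic}; one must add a compensating concave term in the normal variable, which is precisely what the distance function $d-Cd^2$ does automatically. Second, the uniform bound $U^{\xi,\tau}\le V(x_n+St)+C(|x'|^2+t^2)$ is slightly too strong as written: the non-tangential expansion only gives $U=Vx_n+o(x_n)$ along the normal, not $U\le Vx_n$. The correct statement has $V$ replaced by $V+\delta$ for arbitrary $\delta>0$ (and $r$ small depending on $\delta$), which still suffices since you chose $q>V$. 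With these two fixes your argument is complete; the paper's barrier simply packages them more economically and avoids invoking boundary Harnack/Carleson machinery.
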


\begin{proof}
Let $\eta>0$ be a small parameter that will be send to zero at the end of the proof and consider the set,
\[
D_{B,\eta}(t) = \{x_n\leq -t - \e(P(x',t)-B(|x'-x_0'|^2+|t-t_0|^2))\}.
\]
For $B>0$ sufficiently large and $\eta,r>0$ sufficiently small we get that,
\[
D_{B,\eta}(t) \cap B_r(-te_n) \ss \W_{U^{\xi,\tau}}^0(t) \text{ for all $t\in(t_0-r,t_0]$}.
\]
Let $d(\cdot,t)$ be the distance function to $D_{2B,\eta}$ and,
\[
\varphi(x,t) := (|D^{NT}U^{\xi,\t}(x_0,t_0)|+\eta)d(x,t)-Cd(x,t)^2.
\]
Where $C>0$ is a sufficiently large constant such that
\[
\D \varphi(\cdot,t) \leq 0 \text{ in }  B_r(-te_n) \sm D_{2B,\eta}(t).
\]
By making $r$ even smaller if necessary we get that $\varphi$ touches $U^{\xi,\t}$ from above at $(x_0,t_0)$, therefore,
\[
\frac{1+\e\p_t\bar U^{\xi,\t}(x_0',0,t_0)}{\sqrt{1+\e^2|D_{\R^{n-1}}\bar U^{\xi,\t}(x_0',0,t_0)|^2}} =\frac{\p_t\varphi}{|D\varphi|}(x_0,t_0) \leq |D\varphi(x_0,t_0)| = |D^{NT}U^{\xi,\t}(x_0,t_0)|+\eta.
\]
We recover then the desired inequality after sending $\eta$ to zero.
\end{proof}

\subsection{Proof of Theorem \ref{thm:lim_eq}}

Let $\mathcal R_k\to\8$ as in Theorem \ref{thm:lim_eq} and $\mathcal R_k > R_k \to \8$ such that we have enough room to apply the constructions and results in the previous part with respect to $u_k$. Here we use the following notation,
\begin{alignat*}{3}
N_k &:= \|\bar u_k\|_{L^\8\1\bar Q_{R_k}^+\2}+1,\\
V^{k,\t}(x,t) &:= U_k^{\e_k,\tau}\1x+(h/2)e,t-\t^{1/2}\2, \qquad &&v^{k,\t} := \bar V^{k,\t}\\
V_{k,\t}(x,t) &:= U_{k,\e_k,\tau}\1x-(h/2)e,t-\t^{1/2}\2, \qquad &&v_{k,\t} := \bar V_{k,\t}\\
w^{k,\t} &:= v^{k,\t}- v_{k,\t}.
\end{alignat*}

The subsolution inequality of Theorem \ref{thm:lim_eq} becomes now a consequence of the following stability result. The supersolution inequality can be obtained with a similar argument from where we finally settle the proof of Theorem \ref{thm:lim_eq}.

\begin{lemma}
Under the hypothesis of Theorem \ref{thm:lim_eq} there exist a constant $0<\l\leq \L<\8$ such that $w$ satisfies in the viscosity sense
\[
\p_t w \leq \sup_{a\in[\l,\L]} a\p_n w \qquad \text{ in } \qquad \R^{n-1}\times(-\8,0].
\]
\end{lemma}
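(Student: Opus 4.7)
The plan is a viscosity stability argument in which the sup-convolution $v^{k,\tau}$ and inf-convolution $v_{k,\tau}$ serve as regularized approximations that allow the Hele-Shaw free boundary relation to be evaluated classically via Lemma \ref{lem:pt_wise_eval}. The semiconvexity of $v^{k,\tau}$ and semiconcavity of $v_{k,\tau}$ (with constants controlled by $1/\epsilon_k + 1/\tau$) force pointwise $C^{1,1}$ regularity at any one-sided touching by a smooth test function, which is exactly what Lemma \ref{lem:pt_wise_eval} requires in order to upgrade the viscosity free boundary relation to a classical equality at the dual points produced by Lemma \ref{lem:convolution}(b),(d)--(e). To approximate $w$ by $w^{k,\tau}$, the rate estimate Lemma \ref{lem:convolution}(g) gives, on compact sets,
\[
v^{k,\tau}(x,t) \geq \bar u_k(x + (h/2)e,\, t - \tau^{1/2}), \qquad v_{k,\tau}(x,t) \leq \bar u_k(x - (h/2)e,\, t - \tau^{1/2}),
\]
so $w^{k,\tau}(x,t) \geq \delta_{he} \bar u_k(x, t - \tau^{1/2})$, while Theorem \ref{thm:trun_holder} bounds the opposite gap by $O((\epsilon_k + \tau)^{\alpha/2})$. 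Hence the half-relaxed upper limit of $w^{k,\tau}$ as $\tau \to 0,\, k \to \infty$ coincides with $w$ on $\mathbb{R}^{n-1} \times (-\infty, 0]$.

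Now let $\varphi$ be a smooth test function touching $w$ from above at $(x_0', 0, t_0)$; after adding a small quadratic perturbation we may assume the touching is strict. The half-relaxed convergence yields $\tau_k \to 0$ and points $(x_k', t_k) \to (x_0', t_0)$ at which $\varphi$ touches $w^{k,\tau_k}$ from above, equivalently $\varphi + v_{k,\tau_k}$ touches $v^{k,\tau_k}$ from above at $(x_k', 0, t_k)$. Select the dual points $(y_k^\pm, s_k^\pm)$ furnished by Lemma \ref{lem:convolution}(b),(e) for $v^{k,\tau_k}$ at $x_k' + (h/2)e$ and for $v_{k,\tau_k}$ at $x_k' - (h/2)e$. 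The semiconvexity of $v^{k,\tau_k}$, combined with the upper touching whose only nonsmooth ingredient is the semiconcave $v_{k,\tau_k}$, pins $v^{k,\tau_k}$ between two pointwise $C^{1,1}$ polynomials at $(x_k', 0, t_k)$. Translating by $(y_k^+, s_k^+)$ makes $\bar U^{\epsilon_k, \tau_k}_k$ punctually $C^{1,1}$ at the translated dual point, so the corresponding free boundary point is regular in space and time in the sense of Definition \ref{def:gamma_reg}(a). A symmetric argument handles $\bar U_{k, \epsilon_k, \tau_k}$.

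At both dual points Lemma \ref{lem:pt_wise_eval} gives the classical Hele-Shaw relation, which through the identity
\[
\partial_t \bar u = \frac{\partial_n \bar u}{1 - \epsilon \partial_n \bar u} + \epsilon \, \frac{|D_{\mathbb{R}^{n-1}} \bar u|^2}{1 - \epsilon \partial_n \bar u}
\]
from the opening of Section \ref{sec:jensen} becomes a $\leq$ inequality for $\bar U^{\epsilon_k, \tau_k}_k$ and a $\geq$ inequality for $\bar U_{k, \epsilon_k, \tau_k}$. Subtracting the two relations produces
\[
\partial_t w^{k,\tau_k}(x_k', t_k) \leq a_k \, \partial_n w^{k,\tau_k}(x_k', t_k) + \epsilon_k R_k,
\]
where $a_k = 1/(1 - \epsilon_k \partial_n \bar U^{\epsilon_k,\tau_k}_k) \in [\lambda, \Lambda]$ thanks to the universal Hopf and tangent-ball bounds on $|D^{NT} U^{\epsilon_k, \tau_k}_k|$ and $|D^{NT} U_{k, \epsilon_k, \tau_k}|$ discussed right before Lemma \ref{lem:pt_wise_eval} (those bounds use the flatness (a) of Lemma \ref{lem:convolution} together with the scale-matching choice $\xi = \epsilon_k$), and $R_k$ is quadratic in $D_{\mathbb{R}^{n-1}} v^{k,\tau_k}$ and $D_{\mathbb{R}^{n-1}} v_{k,\tau_k}$, uniformly controlled by Theorem \ref{thm:trun_holder}. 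Since $\epsilon_k R_k \to 0$, and since the touching gives $\partial_t \varphi = \partial_t w^{k,\tau_k}$ and $\partial_n \varphi \geq \partial_n w^{k,\tau_k}$ at $(x_k',0,t_k)$, while $a_k \geq \lambda > 0$, passing to the limit $k \to \infty$ yields $\partial_t \varphi(x_0',0,t_0) \leq \sup_{a \in [\lambda, \Lambda]} a \, \partial_n \varphi(x_0',0,t_0)$, which is the asserted viscosity inequality.

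The delicate step is the dual-point analysis: one must guarantee that the dual points of $v^{k,\tau_k}$ and $v_{k,\tau_k}$ both land at \emph{space-and-time} regular free boundary points of the respective approximations, and that the coefficient $a_k$ extracted by subtracting the two classical relations is uniformly in the ellipticity range $[\lambda, \Lambda]$ regardless of the particular dual point. This is the Hele-Shaw analogue of Jensen's doubling-variable trick for viscosity solutions: the sup/inf convolutions on the two sides must conspire at a single space-time contact so that the nonlinear free boundary condition linearizes cleanly, and the specific construction of Section \ref{sec:inf_conv} with opening $(\xi, \tau) = (\epsilon_k, \tau_k)$ is exactly what tunes the curvature scale of the approximations against the flatness scale of $u_k$ so that this linearization has universal ellipticity.
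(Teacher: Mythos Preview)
Your outline follows the ``subtract two classical relations in hodograph coordinates'' heuristic from the opening of Section~\ref{sec:jensen}, but this is not the route the paper actually takes, and your version has a real gap in the error control.

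The paper never subtracts the hodograph free boundary relations at a touching point. Instead, it encodes the test polynomial $P=Q+L$ \emph{geometrically} by conformal deformations: a Kelvin transform of $V^{k}$ (Lemma~\ref{lem:kelvin}) absorbs the harmonic quadratic $Q$, and the map $x\mapsto e^{-\e_k M}x-\e_k(s-\eta)te_n$ applied to $V_{k}$ (Lemma~\ref{lem:rotation}) absorbs the linear part $L$. After these deformations, the separation $(P-w^{k,\tau_k})$ becomes the difference $\widetilde v_k-\widetilde v^k$ of two genuine hodographs, so its interior minimum forces the \emph{physical} free boundaries of $\widehat V^k$ and $\widetilde V_k$ to touch at a single point $X_k$. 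At that common point one has tangent balls from both sides simultaneously, which is exactly what yields the two-sided bound $\l\le |D^{NT}\widehat V^k(X_k)|\le |D^{NT}\widetilde V_k(X_k)|\le\L$. The final inequality for $(s,p_n)$ is then read off from the change-of-variables formulas for the conformal maps; no subtraction of nonlinear hodograph relations, and hence no quadratic remainder, ever appears.

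In your approach the remainder does appear, and the sentence ``$R_k$ is quadratic in $D_{\R^{n-1}} v^{k,\tau_k}$ and $D_{\R^{n-1}} v_{k,\tau_k}$, uniformly controlled by Theorem~\ref{thm:trun_holder}'' is where the argument breaks. Theorem~\ref{thm:trun_holder} gives only a truncated $C^\alpha$ estimate, not a Lipschitz bound; the only available gradient control on the convolutions is Lemma~\ref{lem:convolution}\ref{lipschitz_conv}, namely $|D_{\R^{n-1}} v^{k,\tau_k}|\lesssim N_k\e_k^{-1/2}$ with $N_k=\|\bar u_k\|_{L^\infty}+1$. Under the hypothesis of Theorem~\ref{thm:lim_eq} the sequence $\bar u_k$ is \emph{not} assumed compact (only $\d_{he}\bar u_k$ is), so $N_k$ may diverge. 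The quadratic error therefore has size $\e_k\cdot N_k^2\e_k^{-1}=N_k^2$, which need not vanish. Similarly, your claim that $a_k\in[\l,\L]$ requires two-sided tangent balls at \emph{each} of the two separate dual points; the sup-convolution only furnishes the ball from the positive side and the inf-convolution only the ball from the zero side, so at separate points you have one-sided bounds only. The paper resolves both issues at once by forcing a common contact point via the conformal deformations --- that is the missing idea in your proposal.
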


\begin{proof}
Let us assume without loss of generality that $\varphi \in C^\8(Q_{r}^+)$ is a test function touching $w$ from above at $(0,0)$ such that,
\begin{align*}
&\varphi(x,t) = p \cdot x + s t + O(|x|^2+t^2),\qquad p := D\varphi(0,0)\in \R^n, \qquad s := \p_t \varphi(0,0)\in\R.
\end{align*}
Our goal is to establish that,
\begin{align*}
&s \leq \sup_{a\in[\l,\L]} ap_n.
\end{align*}

Let $B>1$, $\eta\in(0,1)$, and
\begin{align*}
P(x,t) &:= \underbrace{B(|x'|^2 - (n-1)x_n^2)}_{:= Q(x)}+\underbrace{(p+\eta e_n)\cdot x + (s-\eta)t}_{:= L(x,t)}.
\end{align*}
By taking $B$ sufficiently large and then replacing $r$ by a sufficiently small radius,
\begin{align*}
w \leq \varphi < P-(|x|^2+t^2) \text{ in } \bar Q_{r}^+.
\end{align*}
Therefore,
\begin{align*}
\inf_{\bar Q_{\eta r}^+ \sm \bar Q_{\eta r/2}^+} (P - w) &> (\eta r/2)^2 \qquad\text{and}\qquad \inf_{\bar Q_{\eta r/2}^+} (P - w) = 0.
\end{align*}
By the (local) uniform convergence of $\d_{he}\bar u_k\to w$ we obtain that for $k$ sufficiently large,
\begin{align*}
\inf_{\bar Q_{\eta r}^+ \sm \bar Q_{\eta r/2}^+} (P - \d_{he}\bar u_k) &> (1-\s)(\eta r/2)^2 \qquad\text{and}\qquad \inf_{\bar Q_{\eta r/2}^+} (P - \d_{he}\bar u_k) < \s (\eta r/2)^2.
\end{align*}
The constant $\s>0$ is universal and will be determined latter on. Our next step is to choose $\t_k>0$ sufficiently small such that similar relations hold for $w^{k,\t_k}$.

From the H\"older estimate for $\bar u_k$ we already know that,
\[
[\bar u_k]^*_{C^\a_{trun(C\e_k)}\1B_{R_k}^d\2} \leq CN_k.
\]
We consider $k$ large enough such that $CN_k\e_k^{\a/2} < \s(\eta r/2)^2$. By the rate of convergence given by part \ref{rate} in Lemma \ref{lem:convolution} we get now that,
\begin{align*}
\inf_{\bar Q_{3\eta r/4}^+ \sm \bar Q_{\eta r/2}^+} \1P - w^{k,0}\2 &> (1-3\s)(\eta r/2)^2 \qquad\text{and}\qquad \inf_{\bar Q_{\eta r/2}^+} \1P - w^{k,0}\2 < 3\s (\eta r/2)^2,
\end{align*}
At this moment we argue that there exists $\t_k>0$ sufficiently small such that,
\begin{align}\label{eq:wk}
\inf_{\bar Q_{3\eta r/4}^+ \sm \bar Q_{\eta r/2}^+} \1P - w^{k,\t_k}\2 &> (1-5\s)(\eta r/2)^2 \qquad\text{and}\qquad \inf_{\bar Q_{\eta r/2}^+} \1P - w^{k,\t_k}\2 < 5\s (\eta r/2)^2,
\end{align}
Actually for the second relation it suffices to use only the pointwise convergence as $\t\to0$ about a point where the infimum is realized.

The separation over $\bar Q_{3\eta r/4}^+ \sm \bar Q_{\eta r/2}^+$ follows by compactness. Assume that for some sequence $\t_m\to0$ there exists $(x_m,t_m)\to(x_\8,t_\8) \in \bar Q_{\eta r}^+ \sm \bar Q_{\eta r/2}^+$ such that
\[
(P - w^{k,\t_m})(x_m,t_m) \leq (1-5\s)r^2/4.
\]
This implies that there exist $t_m^+,t_m^- \in (t_m-2\t_m^{1/2},t_m)$ such that,
\[
P(x_m,t_m^-) -\1v^{k,0}(x_m,t_m^+) - v_{k,0}(x_m,t_m^-)\2 \leq (1-3\s)r^2/4.
\]
The contradiction now follows because $v^{k,0}$ and $v_{k,0}$ are respectively upper and lower semicontinuous.

Now that $\t_k>0$ has been fixed we denote $V^k := V^{k,\t_k}$, $V_k := V_{k,\t_k}$, $v^k := v^{k,\t_k}$, $v_k := v_{k,\t_k}$ and $w^k := w^{k,\t_k}$. The next step is to consider conformal deformations of $V^k$ and $V_k$ using $P = Q+L$ in order to obtain a contact point between two free boundaries.

Let $\r_k := (\e_k B)^{-1}$ and consider the following Kelvin transform applied to $V^k$,
\begin{align*}
\widetilde V^k(x,t) &:= \1\frac{\r_k}{|x+\r_k e_n|}\2^{n-2}V^k\1\r^2\frac{(x-2x_ne_n)-\r_k e_n}{|x+\r_k e_n|^2}+\r_k e_n,t\2.
\end{align*}
We get that $\widetilde V^k$ is harmonic in its positivity set. From Lemma \ref{lem:kelvin} we know that if $\widetilde v^k$ is the corresponding hodograph then for $k$ sufficiently large,
\begin{align}
\label{eq:q7}\|\widetilde v^k - (v^k - Q)\|_{L^\8\1\bar Q_{\eta r}^+\2} \leq \s(\eta r/2)^2
\end{align}

On the other hand, we consider the conformal deformations of $V_k$ given by,
\begin{align*}
\widetilde V_k\1e^{-\e_k M}x-\e_k(s-\eta)te_n,t\2 = V_k(x,t), \qquad M = (p_n+\eta)Id + e_n\otimes p'-p'\otimes e_n\in \R^{n\times n}.
\end{align*}
Again $\widetilde V_k$ is harmonic in its positivity set because of the relation,
\[
e^{-\e_k M}\1e^{-\e_k M}\2^T = e^{-2\e_k(p_n+\eta)}Id.
\]
From Lemma \ref{lem:rotation} we know that if $\widetilde v_k$ is the corresponding hodograph then,
\begin{align}
\label{eq:q6}\|\widetilde v_k-(v^k + L)\|_{L^\8\1\bar Q_{\eta r}^+\2} \leq \s(\eta r/2)^2
\end{align}

The point is that by substituting $(v_k+L)$ and $(v^k-Q)$ by $\widetilde v_k$ and $\widetilde v^k$ in \eqref{eq:wk} we get that,
\begin{align*}
\inf_{\bar Q_{3\eta r/4}^+ \sm \bar Q_{\eta r/2}^+} (\widetilde v_k - \widetilde v^k) &> (1-7\s)(\eta r/2)^2 \qquad\text{and}\qquad \inf_{\bar Q_{\eta r/2}^+} \1\widetilde v_k - \widetilde v^k\2 < 7\s (\eta r/2)^2.
\end{align*}
At this point we finally declare $\s=1/14$ such that $(1-7\s) = 7\s$. In terms of $\widetilde V_k$ and $\widetilde V^k$ we have that after a suitable translation both graphs come into contact at a common free boundary point, moreover we will enforce this contact to happen at a negative time. To be precise, let $\eta_k>0$ sufficiently small such that,
\[
\theta_k := \inf_{\bar Q_{\eta r/2}^+} \1\widetilde v_k - \widetilde v^k-\eta_kt^{-1}\2 <  7\s (\eta r/2)^2
\]
Notice that if the infimum above is realized at time $t_k$ then from the bounds we have for $v^k$ and $v_k$ get get that
\[
t_k < -C\eta_k N_k^{-1} < 0,
\]
where $C>0$ depends also on $P$ and $B$. Now we set up the translation,
\begin{align*}
\widehat V^k(x,t) &:= \widetilde V^k(x+\e_k(\theta_k + \eta_kt^{-1})e_n,t),
\end{align*}
such that
\begin{align*}
&\widehat V^k(x,t) \leq \widetilde V_k(x,t) \text{ for all $t\in(-\eta r/2,0]$ and $x \in B_{5\eta r/8}(-te_n)\cap \supp \widehat V^k(\cdot,t)$}.
\end{align*}
By the construction, we know that that both graphs come into contact at some point in $\supp\widehat V^k(\cdot,t)$. This can not happen over the boundary $\p B_{5\eta r/8}(-te_n)\cap \supp \widehat V^k(\cdot,t)$ where the inequality is strict, neither over $B_{5\eta r/8}(-te_n)\cap \W_{\widehat V^k}^+(t)$ where both functions are harmonic. Therefore there exists $X_k \in \G_{\widehat V^k}(t_k)\cap \G_{\widetilde V_k}(t_k) \cap B_{5\eta r/8}(-t_k e_n)$.

The one sided $C^{1,1}$ regularity for $v_k$ and $v^k$ implies that $X_k$ is a regular point in space and time for both free boundaries. Indeed, the for the regularity in time we get to use that $t_k<0$. Moreover, there exists balls such that,
\begin{align*}
&B_{r^0_k}(P^0_k) \ss \W_{\widehat V^k}^0(t_k) \text{ such that } X_k \in \bar B_{r^0_k}(P^0_k) \cap \G_{\widehat V^k}^0(t_k),\\
&B_{r^+_k}(P^+_k) \ss \W_{\widetilde V_k}^0(t_k) \text{ such that } X_k \in \bar B_{r^+_k}(P^+_k) \cap \G_{\widetilde V_k}(t_k).
\end{align*}
Given that we have chosen $\xi_k=\e_k$ as the parameter of the convolutions in space we get that the radii are comparable to one which implies that for some universal $0<\l\leq \L<\8$,
\[
\l\leq |D^{NT}\widehat V^k(X_k,t_k)| \leq |D^{NT}\widetilde V_k(X_k,t_k)| \leq \L.
\]

Now we get to evaluate the free boundary relations at $X_k \in \G_{\widehat V^k}(t_k)\cap \G_{\widetilde V_k}(t_k) \cap B_{5\eta r/8}(-t_k e_n)$ in the sense of Lemma \ref{lem:pt_wise_eval} taking into account the corresponding change of variables, see the discussion at the beginning of Section \ref{sec:domain_deformations}. In the following we denote the interior normal vector to $\W^+_{\widehat V^k}(t_k)$ and also $\W^+_{\widehat V^k}(t_k)$ by
\[
\nu_k := \frac{P^+_k-X_k}{r^+_k} = e_n + O((N_k\e_k)^{1/2}).
\]

For the subsolution we have that,
\begin{align*}
V^k(y,t) &= \1\frac{\r_k}{|y-\r_k e_n|}\2^{n-2} \widehat V^k\1 \Phi_k(y) -\e_k(\theta_k+\eta_k t^{-1})e_n,t\2,\\
\Phi_k(y) &:= \r_k^2\frac{(y-2y_n e_n)+\r_k e_n}{|y-\r_k e_n|^2} -\r_k e_n, \qquad \r_k=(\e_k B)^{-1},\\
D\Phi_k(y) &= \1\frac{\r_k}{|y-\r_k e_n|}\2^{2}R_k(y), \qquad R_k(y) \in SO(n) = Id + O(\e_k).
\end{align*}
Denoting $\Phi^{-1}(X_k+\e_k(\theta_k+\eta_k t_k^{-1})e_n) = Y_k$ we get that,
\begin{align*}
\frac{\p_t V^k}{|DV^k|}(Y_k,t_k) &\leq |D^{NT}V^k(Y_k,t_k)|\\
&= \1\frac{\r_k}{|Y_k-\r_ke_n|}\2^n |D^{NT} \widehat V^k(X_k,t_k)|,\\
&= \1\frac{|X_k+(\e_k(\theta_k+\eta_kt_k^{-1})+\r_k)e_n|}{\r_k}\2^n |D^{NT} \widehat V^k(X_k,t_k)|,\\
&\leq (1+C\eta \e_k) |D^{NT} \widehat V^k(X_k,t_k)|,\\
&\leq (1+C\eta \e_k) |D^{NT}\widetilde V_k(X_k,t_k)|.
\end{align*}
for the second inequality we used that $\r_k = (\e_k B)^{-1}$ and $|\eta_kt_k^{-1}| < CN_k$. Developing the first end of the inequality, using that $\|R_k(y_k)\nu_k - e_n\| \leq O((N_k\e_k)^{1/2})$
\begin{align*}
\frac{\p_t V^k}{|DV^k|}(Y_k,t_k) &= \1\frac{|Y_k-\r_k e_n|}{\r_k}\2^{2}\1\frac{\p_t\widehat V^k}{|D\widehat V^k|}(X_k,t_k)+ \cancel{\e_k\eta_k t_k^{-2} e_n\cdot R_k(y_k)\nu_k}\2,\\
&\geq \1\frac{\r_k}{|X_k+(\e_k\theta_k+\eta_kt_k^{-1}+\r_k)e_n|}\2^{2}\frac{\p_t\widehat V^k}{|D\widehat V^k|}(X_k,t_k),\\
&\geq (1-C\eta  \e_k)\frac{\p_t\widetilde V_k}{|D\widetilde V_k|}(X_k,t_k)
\end{align*}

On the other hand, we also evaluate the supersolution free boundary relation for $\widetilde V^k$,
\begin{align*}
V_k(z,t) = \widetilde V_k\1e^{-\e_k M}z-\e_k(s-\eta)te_n,t\2
\end{align*}
Denoting $e^{\e_k M}Z_k -\e_k(s-\eta)t_k = X_k$ we get that,
\begin{align*}
e^{-\e_k(p_n+\eta)}|D^{NT}\widetilde V_k|(X_k,t_k) &= |D^{NT}V_k|(Z_k,t_k),\\
&\leq \frac{\p_t V_k}{|D V_k|}(Z_k,t_k),\\
&= \frac{\p_t \widetilde V_k}{|D \widetilde V_k|}(X_k,t_k) - \e_k (s-\eta) \1e^{-\e_k(e_n\otimes p' - p'\otimes e_n)}\nu\2_n,\\
&\leq \frac{\p_t \widetilde V_k}{|D \widetilde V_k|}(X_k,t_k) - \e_k (s-\eta)(1-C(N_k\e_k)^{1/2})
\end{align*}

Putting the computations together,
\[
\e_k(1-C(N_k\e_k)^{1/2}) (s-\eta) \leq \11+C\eta  \e_k-e^{-2\e_k(p_n+\eta)}\2|D^{NT}\widetilde V_k(X_k,t_k)|
\]
Using the bounds on the gradient and then sending $\e_k\to0$ we finally get that,
\[
(s-\eta) \leq \sup_{a\in[\l,\L]}a(p_n+\eta).
\]
The proof is now completed after sending $\eta$ to zero.
\end{proof}


\section{Further results}\label{sec:further results}

We would like to point out how the method of proof for Theorem \ref{thm: main} can be adapted to the case where the solution remains close to a planar solution with non-constant velocity (i.e. varying slope	). This can be understood as the analogue of an equation with bounded measurable coefficients.

\begin{corollary}\label{cor:bdd_measurable}
Let $0<\l\leq\L<\8$ and,
\[
a:(-1,0]\to[\l,\L] \text{ continuous, } \qquad  A(t) := \int^0_t a(s)ds.
\]
Let $u(\cdot,t) \in C(B_1(A(t)e_n) \times(-1,0]\to[0,\8)$ be a viscosity solution of the Hele-Shaw problem. There exists universal constants $\e_0,\a\in(0,1)$ and $C>0$ depending only on $n$, $\l$ and $\L$, such that if for some $\e\in(0,\e_0)$,
\begin{align*}
a(t)(x_n-A(t)-\e)_+ \leq u(x,t) \leq a(t)(x_n-A(t)+\e)_+,
\end{align*}
then for every $t\in(-1/2,0]$, the free boundary can be parametrized as a $C^{1,\a}$ graph in the $e_n$ direction,
\[
\G_u(t) = \{(x',x_n)\in B_{1/2}: x_n=A(t)-\e\bar u(x',t)\},
\]
with the estimate,
\begin{align*}
\|D_{\R^{n-1}}\bar u\|_{C^\a\1B_{1/2}^{n-1}\times(-1/2,0]\2} \leq C.
\end{align*}
\end{corollary}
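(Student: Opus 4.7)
The strategy is to adapt the entire machinery developed for Theorem \ref{thm: main} to the varying-slope setting. The hodograph transform $\bar u$ now measures horizontal displacement from the planar profile $a(t)(x_n-A(t))_+$ at scale $\e$, defined implicitly by the analogue of \eqref{eq:hodo_def} so that the free boundary is parametrized as $\G_u(t) = \{x_n = A(t) - \e\bar u(x',t)\}$. Implicit differentiation yields a nonlinear system whose linearization at $\e = 0$ reads $\D \bar u = 0$ in $B_1^+$ and $\p_t \bar u = a(t)\,\p_n \bar u$ on $B_1^{n-1}$, i.e. a fractional heat equation with a bounded time-dependent coefficient in $[\lambda,\Lambda]$. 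Throughout, $a(t)$ simply plays the role of $1$ in the pipeline of Sections \ref{sec:harnack}--\ref{sec:jensen}, with constants in the estimates now depending additionally on $\lambda$ and $\Lambda$.

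The Harnack-type estimate of Lemma \ref{lem:point_est} adapts by building the spherical barrier $\p B_{R(t)}((p(t)-A(t)+\e)e_n)$ around the profile $a(t)(x_n-A(t))_+$ rather than $(x_n+t)_+$; the key linear growth $1 + \p_t r \leq |DU| + c|DV|$ becomes $a(t) + \p_t r \leq |DU| + c|DV|$, and the ODE for $r(t)$ picks up $a(t)$-dependent coefficients. Since $a(t) \in [\lambda, \Lambda]$, the resulting $\theta, \mu, \e_0$ are uniform with dependence only on $n$, $\lambda$, $\Lambda$. Consequently Theorem \ref{thm:trun_holder} and Corollary \ref{cor:trun_holder} hold with an exponent and constants depending on $n$, $\lambda$, $\Lambda$.

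The Hölder bootstrap in Section \ref{sec:improvement} carries over verbatim: it is a pure compactness argument and needs only the truncated Hölder estimate from the previous step together with a Liouville theorem for the blow-up limit. When one rescales $\bar u_{k_m}$ at $(x_m', t_m)$ with $\varrho_m \to 0$, continuity of $a(t)$ implies $a(t_m + \varrho_m t) \to a(t_m)$ locally uniformly on compact subsets of $\{t\leq 0\}$, so the linearized equation satisfied by the blow-up is governed by a time-independent slope $a(t_m)$ on the boundary --- and the modulus of continuity never enters quantitatively. Passing to the limit in the free boundary relation via the inf/sup-convolution machinery of Section \ref{sec:jensen} now produces $w$ satisfying
\[
\inf_{a\in[\lambda',\Lambda']} a\,\p_n w \;\leq\; \p_t w \;\leq\; \sup_{a\in[\lambda',\Lambda']} a\,\p_n w \qquad \text{in } \R^{n-1}\times(-\infty,0],
\]
where $[\lambda',\Lambda']$ is the product of the universal interval obtained in Theorem \ref{thm:lim_eq} with $[\lambda,\Lambda]$. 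Liouville's Theorem \ref{thm:liouville}, applied with this wider ellipticity (and the correspondingly smaller Hölder exponent $\a$), still forces $w$ to be constant and closes the contradiction.

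The main technical obstacle is redoing Section \ref{sec:jensen} with a varying slope. The paraboloids $P^{\xi,\tau}(s), P_{\xi,\tau}(s)$ must be centered on the profile $a(t+s)(x_n - A(t+s))_+$ rather than $(x_n + t + s)_+$, which tilts them in a time-dependent manner; the proof of Lemma \ref{lem:convolution} then requires bounds on the displacement $|y_n|$ that use $\lambda \leq a \leq \Lambda$. More delicately, the conformal deformations (Kelvin transform and the $e^{-\e M}$ change of variables) used to bring the two free boundaries into contact must be recalibrated so that the resulting images of $V^{k,\tau}$ and $V_{k,\tau}$ remain harmonic sub/supersolutions with the correct linearized boundary data $p$ and $s$; this forces $M$ to absorb a correction proportional to $a(t_k)$. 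Once these slope-dependent adjustments are carried through, the pointwise evaluation at contact points (Lemma \ref{lem:pt_wise_eval}) produces the inequality $(s - \eta) \leq \sup_{a \in [\lambda',\Lambda']} a(p_n + \eta)$, and sending $\eta \to 0$ concludes the proof.
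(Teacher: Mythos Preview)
Your overall plan---redefine the hodograph against the moving profile, redo the Harnack step with barriers centered on $A(t)e_n$, rerun the bootstrap, and recover the limiting Pucci-type relation with ellipticity widened to $[\lambda',\Lambda']$---matches the paper's outline. There is, however, a genuine error in how you adapt Section~\ref{sec:jensen}.

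You propose to let the paraboloids $P^{\xi,\tau}(s)$ travel with the profile $a(t+s)(x_n-A(t+s))_+$. The paper does exactly the opposite: it takes
\[
P^{\xi,\tau}(s):=\Bigl\{y_n\le -2\e N\Bigl(\tfrac{1}{\xi}|y'|^2+\tfrac{1}{\tau}s^2\Bigr)\Bigr\},
\]
i.e.\ it \emph{drops} the drift term entirely. The reason is the dual-point property~\ref{dual_pt} of Lemma~\ref{lem:convolution}. That property rests on the identity $u^{\xi,\tau}(\,\cdot-y_0,\,\cdot-s_0)\ge u^*$, which in turn requires that the family $\{P^{\xi,\tau}(s)\}_s$ be independent of the base time $t_0$: one must be able to plug $s=s_0$, $y=y_0\in P^{\xi,\tau}(s_0)$ at \emph{every} $(x,t)$. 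If the paraboloid is shifted by $A(t_0)-A(t_0+s)$, it depends on $t_0$, and the translated sup-convolution no longer dominates $u^*$; hence one cannot transport a test function from $\Gamma_{u^{\xi,\tau}}$ back to $\Gamma_u$, and property~\ref{eq_conv} (subsolution) fails. The paper flags this explicitly. The price for using stationary paraboloids is that the hodograph identity~\eqref{eq:hodograph_conv} acquires an extra $-\e^{-1}A(t)$ on both sides, so the rate-of-convergence bound in part~\ref{rate} picks up an additive $2\tau^{1/2}\Lambda/\e$; this forces $\tau_k\ll\e_k^2$ rather than merely $\tau_k\to0$ when choosing $\tau_k$ in the proof of Theorem~\ref{thm:lim_eq}.

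A second, smaller point: in the bootstrap you invoke continuity of $a$ to argue $a(t_m+\varrho_m t)\to a(t_m)$ and conclude the limiting coefficient is constant. This is unnecessary and risks smuggling the modulus of continuity back in. The paper's observation is structural: the rescaling $b(t):=a(\varrho t)$ still takes values in $[\lambda,\Lambda]$, so every rescaled problem satisfies the \emph{same} hypotheses, and the limiting equation already lands in the bounded-measurable class to which Theorem~\ref{thm:liouville} applies. No pointwise convergence of the coefficient is needed.
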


By rescaling we can easily obtain the previous consequence however the magnitude of the scaling will depend on the modulus of continuity of $a$ and therefore the resulting estimate would be also dependent on such modulus of continuity. In order to obtain a result which is independent of the modulus of continuity of the slope we could adapt the results of this paper. The strategy is already flexible enough to do this, so let us recapitulate:
\begin{enumerate}[label=(\alph*)]
\item We consider the hodograph in terms of the planar profile $(x,t) \mapsto a(t)(x_n-A(t))_+$,
\[
u(x + (A(t)-\e\bar u(x,t))e_n,t) = a(t)x_n.
\]
\item The diminish of oscillation Lemma \ref{lem:point_est} can be established assuming a flatness hypothesis from the planar profile $(x,t) \mapsto a(t)(x_n-A(t))_+$ with constants depending only on $n$, $\l$ and $\L$. This leads to an estimate for a truncated $\a$-H\"older norm of $\bar u$ as in Theorem \ref{thm:trun_holder}.
\item Section \ref{sec:improvement} can now be reproduced with minor observations concerning the rescalings. Notice that for a rescaling of the form,
\begin{alignat*}{2}
&v(x,t) := \frac{u(\varrho x,\varrho t)}{\varrho}, \qquad &&\bar v(x,t) := \frac{\bar u(\varrho x,\varrho t)}{M},\\
&b(t) := a(\varrho t) \in [\l,\L], \qquad &&B(t) := \int^0_t b(s)ds = \frac{A(\varrho t)}{\varrho}
\end{alignat*}
we get that $v$ is still a viscosity solution of Hele-Shaw and $\bar v$ satisfies,
\begin{align*}
b(t)x_n = v(x+(B(t)-\xi \bar v(x,t))e_n,t),
\end{align*}
where $\xi := \e M/\varrho$. In order words, such rescaling preserves the hypothesis of $\bar v$ being an hodograph of $v$ with respect to a planar profile with slopes in the range $[\l,\L]$.
\item Section \ref{sec:jensen} requires a different construction for the sup/inf convolutions. Here we consider,
\begin{align*}
u^{\xi, \t}(x,t) &:= \sup_{\substack{t\in\R\\y \in P^{\xi, \t}(s)}}u^*(x+y,t+s),\\
u_{\xi, \t}(x,t) &:= \inf_{\substack{t\in\R\\y \in P_{\xi, \t}(s)}}u_*(x+y,t+s),
\end{align*}
where,
\begin{align*}
P^{\xi, \t}(s) &:= \left\{(y',y_n)\in\R^n:y_n \leq - 2\e N\1\frac{1}{\xi}|y'|^2+\frac{1}{\t}s^2\2\right\},\\
P_{\xi, \t}(s) &:= \left\{(y',y_n)\in\R^n:y_n \geq 2\e N\1\frac{1}{\xi}|y'|^2+\frac{1}{\t}s^2\2\right\}.
\end{align*}
Notice that now the paraboloid does not travel with the given planar profile. The reason for this comes from the analogous property \ref{eq_conv} in Lemma \ref{lem:convolution}. If we allow the paraboloids to travel we the planar profile then we would not be able to recover the free boundary condition for $u^{\xi,\t}$ because property \ref{dual_pt} does not necessarily hold. The price we pay is that now for the hodograph we get the following construction,
\begin{align*}
\bar u^{\xi,\t}(x',x_n,t) - \frac{1}{\e}A(t) = \sup_{\substack{|s| <\t^{1/2}\\ |y'|<\xi^{1/2}}} \1 \bar u(x'+y',x_n,t+s) -\frac{1}{\e}A(t+s)\2 - 2N\1\frac{1}{\xi}|y'|^2+\frac{1}{\t}s^2\2.
\end{align*}
Everything works fine in terms of the one sided $C^{1,1}$ regularity in space and time. Moreover, making $\xi=\e$ we still recover the bounds for $|D^{NT}U^{\xi,\t}|$ at the regular points of the free boundary. Notice that this ultimately leads to new ellipticity constants $\l'=c\l$ and $\L' = C\L$ for some $c,C>0$ depending only on the dimension.

In terms of the rate of convergence we have that,
\[
\bar u(x',x_n,t) \leq \bar u^{\xi,\t}(x',x_n,t) \leq \sup_{\substack{ |s|<\t^{1/2}\\|y'| < \xi^{1/2}}} \bar u(x'+y',x_n,t+s) + \frac{2\t^{1/2}\L}{\e}.
\]
We still have a control from the oscillation in space coming from the $\a$-H\"older estimate as we did in the beginning of the proof of Theorem \ref{thm:lim_eq}. On the other hand, we also need to control the oscillation in time which can be achieved for each $k$ by taking $\t_k$ sufficiently small. Notice that now $\t_k \ll \e_k^2$ in order to control the second term in the relation above.
\end{enumerate}

Once we know that the free boundary is at least $C^1$ regular in space we obtain that the free boundary is also analytic in space. This result was proved in \cite{kim2006regularity} by using the transformation given by  M. Elliott and Janovsk{\`y} in \cite{elliott1981variational}. The main observation is that the Hele-Shaw problem can be reformulated as an obstacle problem with $C^1$ free boundary at every given time.

\begin{corollary}
Under the hypothesis of Theorem \ref{thm: main} or Corollary \ref{cor:bdd_measurable} we get that for each $t\in(-1,0]$, $\bar u(\cdot,t)$ is analytic.
\end{corollary}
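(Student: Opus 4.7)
The plan is to reduce the fixed-time analyticity question to the higher regularity theory of the classical obstacle problem via the Baiocchi-type transformation introduced by Elliott and Janovsk\'y in \cite{elliott1981variational}, and already used to obtain analyticity in the Hele-Shaw context by Kim in \cite{kim2006regularity}. The input we bring from the present work is the $C^{1,\alpha}$ regularity of $\bar u(\cdot,t)$ coming from Theorem \ref{thm: main} or Corollary \ref{cor:bdd_measurable}, which supplies the missing $C^1$ hypothesis needed to trigger that theory.

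Fix $t\in(-1/2,0]$. By the monotonicity of the support of $u$ together with the flatness hypothesis, for each $x$ in a spatial neighborhood of $\Gamma_u(t)$ there is a well-defined entry time $\tau(x)\in[-1,t]$ of $x$ into $\supp u$. Define
\[
w(x,t):=\int_{\tau(x)}^{t} u(x,s)\,ds.
\]
A direct computation (analogous to the one carried out in \cite{elliott1981variational}), combining $\Delta u=0$ in $\Omega_u^+$ with the free boundary velocity identity $\partial_t u=|Du|^2$, shows that for each fixed $t$ the function $w(\cdot,t)$ satisfies the obstacle problem
\[
\min\bigl(-\Delta w(\cdot,t)+f_t,\,w(\cdot,t)\bigr)=0
\]
on an open spatial neighborhood of $\Gamma_u(t)$, where $f_t$ is piecewise constant (in particular analytic) and where the coincidence set $\{w(\cdot,t)=0\}$ coincides with $\Omega_u^0(t)$. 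Thus the free boundary of the obstacle problem for $w(\cdot,t)$ is exactly $\Gamma_u(t)$.

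By Theorem \ref{thm: main} (respectively Corollary \ref{cor:bdd_measurable}), $\Gamma_u(t)$ is the graph of a $C^{1,\alpha}$ function in the $e_n$ direction and, in particular, the free boundary of $w(\cdot,t)$ is of class $C^{1}$. At this point we invoke the classical higher regularity result for the obstacle problem with analytic data: at any free boundary point where the free boundary is $C^1$, the Kinderlehrer--Nirenberg hodograph--Legendre argument \cite{Kinderlehrer1977regularity} upgrades the regularity to analytic. Applied pointwise, this yields analyticity of $\Gamma_u(t)$, hence of $\bar u(\cdot,t)$, for every $t\in(-1/2,0]$.

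The one nontrivial point is to justify the Baiocchi transformation in the local viscosity setting considered here: one must check that $\tau$ is measurable with bounded range in the relevant region, that the identity for $w(\cdot,t)$ holds in the appropriate (weak or variational) sense, and that its coincidence set really is $\Omega_u^0(t)$. Each of these follows from the monotonicity of the support, the nondegeneracy of $|Du|$ near $\Gamma_u(t)$ implicit in \eqref{eqn: main thm flatness}, and the equivalence of viscosity and variational formulations established in \cite{kim2009homogenization}; the main obstacle is essentially bookkeeping in verifying these ingredients locally, after which \cite{kim2006regularity} delivers the conclusion.
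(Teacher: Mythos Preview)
Your proposal is correct and follows the same route as the paper: reduce, via the Elliott--Janovsk\'y (Baiocchi-type) transformation \cite{elliott1981variational}, to an obstacle problem whose free boundary is $\Gamma_u(t)$, then invoke the $C^1$ regularity supplied by Theorem \ref{thm: main} (or Corollary \ref{cor:bdd_measurable}) together with the Kinderlehrer--Nirenberg higher regularity theory to conclude analyticity, exactly as in \cite{kim2006regularity}. The paper gives only this sketch; your write-up is a more explicit version of the same argument.
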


One of the main interest of having a regularity estimate coming from a local flatness hypothesis is that in many cases these type of hypothesis are actually satisfied for a large set of points. This is one of the driving themes in the theory of minimal surfaces which has been also developed for elliptic and parabolic free boundary problems, see for instance the book \cite{salsa}.


\section{Appendix}\label{sec:appendix}

\subsection{Barriers}

\begin{lemma}\label{lem:appendix}
Given $r\in(0,1)$ consider,
\begin{align*}
p&:=\frac{1}{8r}-\frac{r}{2},\quad\text{ and } \quad R:=\frac{1}{8r}+\frac{r}{2},
\end{align*}
such that $\p B_R(pe_n)$ is the unique sphere that contains the point $-re_n$ and the $(n-2)$ dimensional sphere $\p B_{1/2}^{n-1}$. Let $\e\in(0,1)$ and $u$ satisfies,
\begin{alignat*}{3}
\D u &= 0  \qquad&&\text{ in }  \qquad&&\W \sm B_{1/16}((1/4-\e)e_n),\\
u &= x_n^+ \qquad&&\text{ on } \qquad&&\p\1\W \sm B_{1/16}((1/4-\e)e_n)\2,
\end{alignat*}
where,
\begin{align*}
\W = \1B_R(pe_n)\cap\{x_n\leq 0\}\2 \cup \1B_{3/4}(-\e e_n)\cap\{x_n>0\}\2.
\end{align*}
Then there exists universal constants $r_0,\e_0\in(0,1)$ and $C>0$ such that,
\begin{align*}
r \in (0,r_0) \qquad\text{and}\qquad \e\in(0,\e_0) \qquad\Rightarrow\qquad \p_n u \geq 1-Cr.
\end{align*}
\end{lemma}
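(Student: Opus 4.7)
The strategy is to bracket $u$ between two explicit harmonic functions within $O(r)$ of the planar profile $x_n$, and then use this bracketing to pin down the gradient on the lower cap $\Gamma := \p B_R(pe_n) \cap \{x_n \leq 0\}$. I will prove the bound at points on $\Gamma$ (which is the only portion of $\partial \Omega$ with $x_n < 0$, and the only place where the estimate can fail to be trivial).

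\textbf{Two-sided bound.} First I check that
\[
x_n \leq u \leq x_n + r \quad \text{in } \Omega\setminus B_{1/16}((1/4-\e)e_n).
\]
The lower bound is immediate since $x_n$ is harmonic and $x_n \leq x_n^+$ on $\partial\Omega$ and on the hole (where $u = x_n$). For the upper bound note that on $\Gamma$ we have $x_n \geq -r$ (the sphere $\p B_R(pe_n)$ reaches its lowest point at $-re_n$), so $x_n + r \geq 0 = x_n^+$; on the upper cap $x_n^+ = x_n \leq x_n + r$; and on the hole $u = x_n\le x_n + r$. Hence $x_n + r$ dominates $u$ on the parabolic boundary and by the maximum principle everywhere.

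\textbf{Reduction to a small harmonic function.} Set $v := u - x_n$. Then $v$ is harmonic in $\Omega\setminus B_{1/16}$, satisfies $0 \leq v \leq r$, and has boundary values $v = d(x') := \sqrt{R^2 - |x'|^2} - p \in [0,r]$ on the lower cap $\Gamma$ and $v = 0$ on the remainder of $\partial\Omega$. Because $u = 0$ on $\Gamma$, along $\Gamma$ we have $\partial_n u = 1 + \partial_n v$, so it suffices to show $\partial_n v \geq -Cr$ on $\Gamma$. Now the key geometric observation is that $v$ achieves its maximum value $O(r)$ only on the thin lens boundary $\Gamma$: on the upper half $\Omega^+ := \Omega \cap \{x_n > 0\}$, $v$ solves a Dirichlet problem with zero data on the upper sphere, on the annulus $\{x_n = 0, 1/2 < |x'| < \sqrt{9/16-\e^2}\}$, and on the hole boundary, and has value $v(x',0) \in[0,r]$ on the top face $\{x_n = 0, |x'| < 1/2\}$. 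Interior gradient estimates for bounded harmonic functions then yield $|Dv| \leq Cr$ in any compact subset of $\Omega^+ \cup \{x_n = 0, |x'| < 1/2\}$ kept away from the corner $\{|x'| = 1/2\}$.

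\textbf{Matching through the thin lens, and main obstacle.} Since $v$ is harmonic across the top face, the bound $|\partial_n v(x', 0)|\leq Cr$ from above transfers to the lens side. In the lens $\mathcal L = B_R(pe_n)\cap\{x_n < 0\}$ the function $v$ is harmonic with boundary values $d(x')$ on $\Gamma$ and $v(x',0)$ on the top face, both of order $r$ and differing by at most $Cr^2$ (integrating the top-face gradient bound over the lens width $d(x') \leq r$). A thin-layer comparison---e.g. sandwiching $v$ in $\mathcal L$ between the harmonic extensions of the data $d(x')\pm Cr^2$---shows that $v$ is within $Cr^2$ of the linear profile $x_n + d(x')$ in the lens, whence $|\partial_n v| \leq Cr$ on $\Gamma$ for $|x'| \leq 1/2 - \sigma$ and $\sigma$ a small universal constant. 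The main obstacle is the edge region where $|x'|$ approaches $1/2$: here the top face meets the annulus in a non-smooth corner and the matching-through-the-top-face argument breaks down. This is handled separately by noting that here $d(x')$ is itself small (of order $r(1-2|x'|)$), so a direct Hopf-type barrier at an interior-ball touching $\Gamma$ at the relevant point---normalized by the explicit subharmonic profile $x_n^+$---yields $|Dv| \leq Cr$ on this part of $\Gamma$ as well. Combining both regions produces the desired universal bound $\partial_n u \geq 1 - Cr$ on all of $\Gamma$.
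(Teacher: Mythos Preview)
Your two-sided bound $x_n\le u\le x_n+r$ and the reduction to $\partial_n v\ge -Cr$ on $\Gamma$ are fine, but the argument breaks at the step you label ``Interior gradient estimates for bounded harmonic functions then yield $|Dv|\le Cr$ in any compact subset of $\Omega^+\cup\{x_n=0,|x'|<1/2\}$.'' The interior gradient estimate gives $|Dv(x)|\le C\,\|v\|_{L^\infty}/\operatorname{dist}(x,\partial(\Omega\setminus B_{1/16}))$. For a point $(x',0)$ on the top face with $|x'|<1/2$, the nearest boundary is the lower cap $\Gamma$, and an easy computation shows $\operatorname{dist}((x',0),\Gamma)=R-\sqrt{|x'|^2+p^2}\le r$. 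So the interior estimate only yields $|Dv(x',0)|\le C\cdot r/r=C$, not $Cr$. Treating $\{x_n=0\}$ as a boundary of $\Omega^+$ does not help either, since you have no regularity information on the Dirichlet data $v(\cdot,0)$ beyond the $L^\infty$ bound $r$. Without the $O(r)$ gradient bound at the top face, the subsequent thin-lens matching and the conclusion $|\partial_n v|\le Cr$ on $\Gamma$ are unsupported.

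The paper sidesteps the thin-lens difficulty completely by comparing $u$ from below with an explicit radial barrier centered at $pe_n$,
\[
U(x)=\frac{1/16-r}{f(R-1/16)-f(R)}\bigl(f(|x-pe_n|)-f(R)\bigr),\qquad f(\rho)=\rho^{2-n}\ \ (\text{or }-\ln\rho\text{ if }n=2),
\]
which vanishes on $\partial B_R(pe_n)\supset\Gamma$ and satisfies $U\le u$ in the annular sector $(B_R\setminus B_{R-1/16})(pe_n)\cap\Sigma$. Since both $u$ and $U$ vanish on $\Gamma$, their full gradients there are normal, so $\partial_n u\ge\partial_n U$, and a direct computation gives $\partial_n U=1-Cr+o(r)$. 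This one-line barrier replaces your entire lens-matching program; if you want to repair your approach, the missing ingredient is precisely such a lower barrier touching $\Gamma$ with explicitly computable normal slope.
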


\begin{proof}
Given that $u\geq x_n^+$ in $\W$, it implies already that $\p_nu \geq 1$ in $\p\W\cap\{x_n>0\}$. It suffices to show now that the estimate holds for $x \in \p\W\cap\{x_n<0\}$ and conclude by the maximum principle.

Let us consider,
\begin{align*}
U(x) = \frac{1/16-r}{f(R-1/16)-fR}\1 f(|x-pe_n|) - fR\2,
\end{align*}
where,
\begin{align*}
f(\r) := \begin{cases}
-\ln\r &\text{ if  $n=2$},\\
\r^{2-n} &\text{ if $n>2$}.
\end{cases}
\end{align*}
We get then that $U$ and $u$ are both harmonic functions in $\W$ vanishing on $\p B_R(pe_n)\cap\{x_n< 0\}$. Moreover, $U \leq u$ in $(B_R(pe_n)\sm B_{R-1/16}(pe_n)) \cap \Sigma \ss \W$, where $\Sigma$ is a cone with vertex at $pe_n$ and spanned by $B_{1/2}^{n-1}$. Therefore, the problem reduces to getting a similar lower bound for $\p_n U$, which is a straightforward computation,
\[
\p_n U = e_n\cdot\1\frac{pe_n-x}{R}\2 \frac{1/16-r}{f(R-1/16)-fR}f'R = \ldots = 1-Cr+o(r^2).
\]
\end{proof}

\begin{figure}[t]
\begin{center}
\includegraphics[width=10cm]{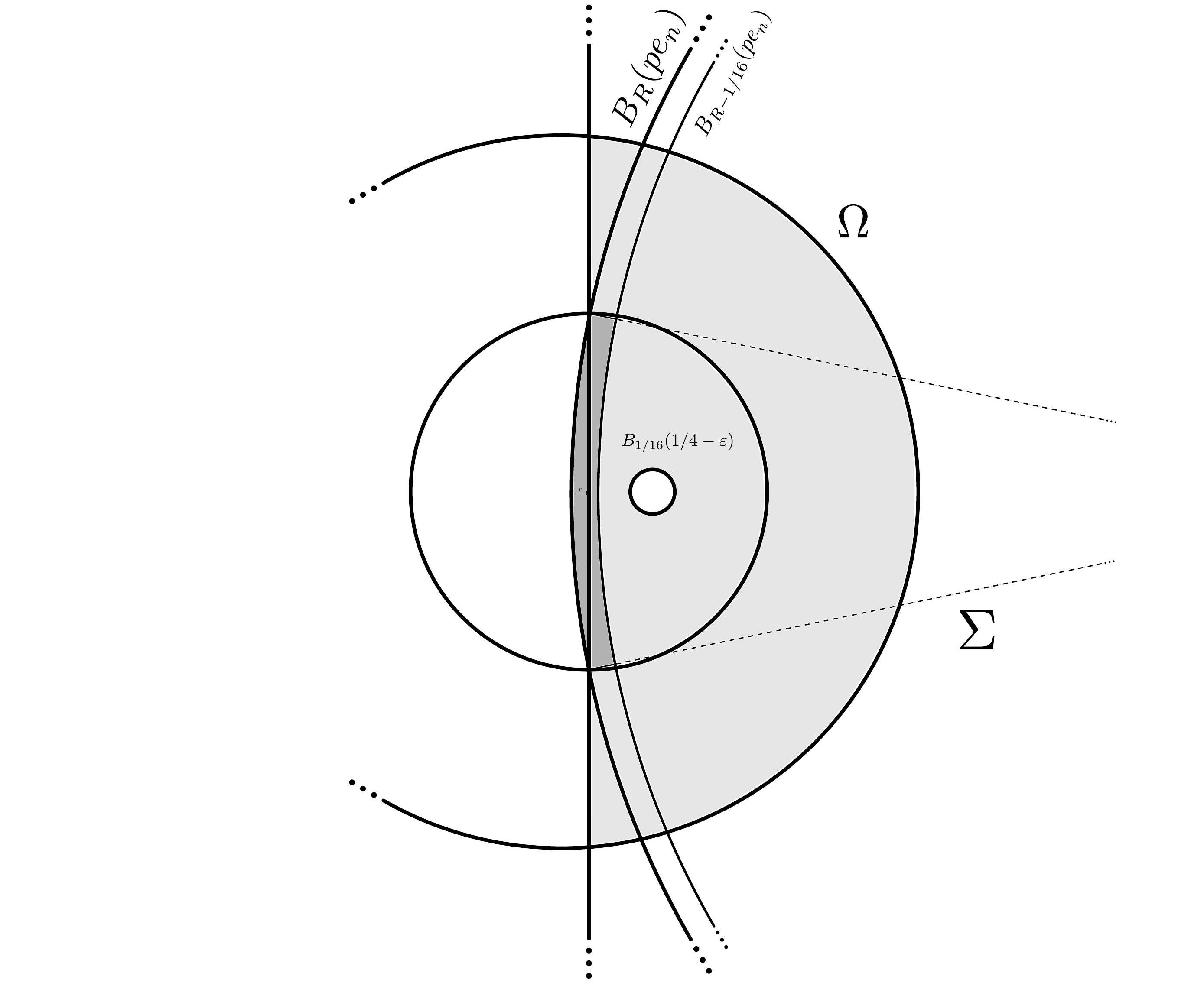}
\end{center}
\caption{Geometric configuration of Lemma \ref{lem:appendix1}}
\label{fig:appendix1}
\end{figure}

\subsection{Interpolation Lemmas}

In this appendix we examine a few interpolation results about truncated H\"older spaces of multi-valued function. The following result is a type of maximum principle.

\begin{lemma}\label{lem:appendix1}
For $h_0\in[0,1)$ and $v:[-1,1]\to P(\R)\sm \emptyset$,
\begin{align*}
&v(\pm 1) \cap (-\8,0] \neq \emptyset, \qquad\text{ and }\qquad \bigcup_{\substack{h\in(h_0,1)\\x\in[-1+h,1-h]}} \d^2_h v(x) \ss [-1,\8),
\end{align*}
implies $v(x) \ss (-\8,1]$ for all $x\in[-1+h_0,1-h_0]$.
\end{lemma}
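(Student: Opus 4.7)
The plan is to convert the multi-valued hypothesis into a pointwise scalar inequality for the envelopes $V(x) := \sup v(x)$ and $v_\ast(x) := \inf v(x)$, and then exploit the boundary condition by a choice of step $h$ that places one evaluation point exactly on an endpoint $\pm 1$. Unpacking the set-arithmetic definition
\[
\delta_h^2 v(x) = \bigl\{a - b - c + d :\; a \in v(x+h),\ b,c \in v(x),\ d \in v(x-h)\bigr\}
\]
from Section~\ref{sec:multi-valued_fn} and reading off the extremal element, the hypothesis $\delta_h^2 v(x) \subseteq [-1,\infty)$ is equivalent to
\[
v_\ast(x+h) + v_\ast(x-h) \;\geq\; 2V(x) - 1, \qquad h \in (h_0,1),\ x \in [-1+h, 1-h].
\]
Because $v(y) \neq \emptyset$ forces $v_\ast(y) < +\infty$ everywhere, the right-hand side is finite, so $V$ is finite at every point of the open target and, combined with $v_\ast \leq V$, also satisfies the twin inequality $V(x+h) + V(x-h) \geq 2V(x) - 1$.

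The second ingredient is the observation that for $x \in [-1+h_0, 1-h_0]$ with $0 < |x| < 1-h_0$, the choice $h := 1-|x| \in (h_0,1)$ puts $x$ at the endpoint of $[-1+h, 1-h]$ and sends one of $x \pm h$ to $\pm 1$. Using $v_\ast(\pm 1) \leq 0$ and choosing the sign WLOG, this yields the recursion
\[
V(x) \;\leq\; \frac{V(f(x)) + 1}{2}, \qquad f(x) := 2x - \operatorname{sign}(x) \in (-1,1),
\]
whose iterate reads $V(x) \leq V(f^n(x))/2^n + (1 - 1/2^n)$ as long as the orbit $\{f^k(x)\}_{k<n}$ remains in the iteration domain $(-1+h_0,\,1-h_0)\setminus\{0\}$.

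I would now argue by contradiction. Let $M := \sup_{[-1+h_0,1-h_0]} V$ and assume $M > 1$; pick near-maximizers $x_j$ with $V(x_j) \to M$. For those $x_j$ whose orbits remain in the target set — which holds generically once $|x_j|$ is bounded away from $0$ and from $1-h_0$ — sending $n \to \infty$ in the iterated bound gives $V(x_j) \leq 1$, contradicting $V(x_j) \to M > 1$. The remaining possibility is that the near-maximizers concentrate in the central window $(-h_0/2, h_0/2)$; for such $x_j$ I would use the alternate step $h \in (h_0,\,1-h_0-|x_j|)$, which keeps both $x_j \pm h$ strictly inside the target, and close the argument using the bound $V \leq 1$ already established on the "good" subset $|y| \in [h_0/2,\,1-h_0]$ together with the same contraction factor $1/2$.

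The main technical obstacle is the dynamics of $f$, which is locally expansive (it doubles the distance to $\pm 1$) and whose orbits can re-enter the central window and then jump into the uncontrolled boundary layer $[-1,-1+h_0) \cup (1-h_0, 1]$, where the hypothesis produces no direct upper bound on $V$. To close the argument without assuming any regularity on $V$ or $v_\ast$, I would exploit the fact that the hypothesis applied at $x = \pm(1-h)$ still gives $V(\pm(1-h)) \leq (V(\mp 1 \pm 2h) + 1)/2$, so boundary-layer values feed back to interior target values; combined with the contraction in Step~3, a finite number of such reflections suffices to pass the bound $V \leq 1$ from the bulk of the target all the way to the central window and to the exceptional point $x = 0$.
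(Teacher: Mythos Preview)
Your central observation---choosing $h=1-|x|$ so that one of the shifted points lands exactly at an endpoint $\pm 1$, where $v_\ast\leq 0$---is exactly the paper's idea. What differs is the execution: you iterate the resulting inequality through the map $f(x)=2x-\operatorname{sign}(x)$ and then argue on near-maximizers, whereas the paper dispenses with the dynamics entirely and uses the maximum in a single step. Concretely, assuming the supremum $V$ on $[-1+h_0,1-h_0]$ exceeds $1$ and is attained at some $x\in[0,1-h_0]$ (WLOG), the paper applies the second-difference bound once at step $h=1-x$ to get
\[
-1 \;\leq\; V(2x-1) - 2V(x) + v_\ast(1) \;\leq\; V(x) - 2V(x) + 0 \;=\; -V(x) \;<\; -1,
\]
using $V(2x-1)\leq V(x)$ by maximality. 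That is the whole argument. Your iteration buys nothing extra and creates the orbit-tracking problem you then have to undo.

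Your ``main technical obstacle'' paragraph is honest but does not actually close the gap: the claim that ``a finite number of such reflections suffices'' is asserted, not proved, and as written you have no mechanism preventing the orbit of a near-maximizer from visiting the boundary layer $(1-h_0,1]$ before you can cash in the contraction. The paper's one-step argument sidesteps this almost completely, since the only information needed about $2x-1$ is $V(2x-1)\leq V(x)$. (There is still a residual issue when the maximizer satisfies $|x|<h_0/2$, so that $2x-1\notin[-1+h_0,1-h_0]$; the paper glosses over this, and you flag it. A clean fix in the paper's framework is to treat that case with a second application at an admissible step $h\in(h_0,1-h_0-|x|)$, which keeps both $x\pm h$ inside the target interval and yields $2V(x)\leq V(x+h)+V(x-h)+1\leq 2V(x)+1$ with equality forced---still no iteration required.)
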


\begin{proof}
Assume by contradiction and without loss of generality that there exists $x\in [0,1-h_0]$ that realizes the strictly positive maximum of $(v^0-1)$ in $[-1+h_0,1-h_0]$. Then we obtain the following contradiction,
\[
-1 \leq (v^0(2x-1) - v^0(x)) - (v^0(x) - v_0(1)) < 0 - 1.
\]
\end{proof}

The following result is adapted from Lemma 5.6 in \cite{Caffarelli95}. 

\begin{lemma}\label{lem:interpolation}
For $h_0\in[0,1)$, $\a,\b\in(0,1)$ such that $\b+\a<1$, and $v:[-1,1]\to P(\R)\sm \emptyset$, the following estimate holds for some $C>0$ depending only on $(1-(\b+\a))$,
\begin{align*}
\sup_{h\in(h_0,2)} \left\|\frac{\d_{h} v}{h^{\b+\a}}\right\|_{L^\8[-1+h/2,1-h/2]} \leq C\1\osc_{[-1,1]} v + \sup_{h\in(h_0,1)} \left\|\frac{\d_{h}^2 v}{h^{\b+\a}}\right\|_{L^\8[-1+h,1-h]}\2.
\end{align*}
\end{lemma}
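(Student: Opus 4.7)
The plan is to establish the dyadic recursion $\Phi(h/2) \leq \tfrac{1}{2}\Phi(h) + \tfrac{1}{2} M_2 (h/2)^{\b+\a}$, where $\Phi(h) := \|\d_h v\|_{L^\8[-1+h/2,1-h/2]}$, $M_0 := \osc_{[-1,1]} v$, and $M_2$ denotes the second-difference seminorm appearing on the right-hand side of the claim. Iterating this recursion dyadically from a starting scale $h^{\ast} \in [1,2)$, at which trivially $\Phi(h^\ast) \leq M_0$, and summing the resulting geometric series (convergent precisely because $\b + \a < 1$), will produce the desired bound $\Phi(h) \leq C(M_0 + M_2) h^{\b+\a}$.

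The heart of the matter is the algebraic identity underlying the recursion. Given $y \in [-1+h/4, 1-h/4]$, pick $x \in [-1+h/2, 1-h/2]$ with $y = x - h/4$ (or $y = x + h/4$, whichever keeps $x$ inside the domain), so that $\d_{h/2} v(y) = v(x) - v(x-h/2)$. Any element $c \in \d_{h/2} v(y)$ has the form $c = b - a_-$ with $b \in v(x)$ and $a_- \in v(x - h/2)$. Choosing any $a_+ \in v(x+h/2)$ and setting $c' := a_+ - b \in \d_{h/2} v(x + h/4)$, one verifies
\begin{align*}
c + c' = a_+ - a_- \in \d_h v(x), \qquad c' - c = a_+ - 2b + a_- \in \d_{h/2}^2 v(x),
\end{align*}
the crucial observation being that using the \emph{same} $b \in v(x)$ in both slots is a legitimate way to realize an element of the Minkowski double-difference $\d_{h/2}^2 v(x) = \d_{h/2} v(x + h/4) - \d_{h/2} v(x - h/4)$. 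Writing $2c = (c+c') - (c' - c)$ and taking absolute values yields $|c| \leq \tfrac{1}{2}(\Phi(h) + M_2 (h/2)^{\b+\a})$; the recursion then follows by taking the supremum over $y$.

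Given the recursion, for $h \in (h_0, 1]$ write $h = h^\ast/2^k$ with $h^\ast \in [1,2)$ and $k \geq 0$ an integer, and iterate $k$ times to obtain
\begin{align*}
\Phi(h) \leq \frac{\Phi(h^\ast)}{2^k} + \frac{M_2 h^{\b+\a}}{2^{k+1}} \sum_{j=1}^{k} 2^{j(1-(\b+\a))} \leq \frac{M_0}{2^k} + C M_2 h^{\b+\a},
\end{align*}
with $C = C(1-(\b+\a))$ since the inner sum is a geometric progression of ratio $2^{1-(\b+\a)} > 1$. Each intermediate scale $h^\ast/2^j$ in the iteration lies in $(h_0, 1)$, so the $M_2$ hypothesis is applicable at every step. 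Finally $M_0 / 2^k = M_0 h/h^\ast \leq M_0 h \leq M_0 h^{\b+\a}$ for $h \leq 1$ and $\b+\a \leq 1$, settling the case $h \in (h_0, 1]$. The case $h \in (1, 2)$ is immediate from $\Phi(h) \leq M_0 \leq M_0 h^{\b+\a}$.

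The main obstacle is the algebraic identity in the second paragraph: the multi-valued nature of $v$ forces one to exhibit representatives $(a_+, b, a_-)$ whose various combinations simultaneously realize prescribed elements of the three Minkowski sets $\d_h v(x)$, $\d_{h/2}^2 v(x)$, and $\d_{h/2} v(y)$. The fact that a single choice of $b \in v(x)$ used in both slots still yields a valid element of $\d_{h/2}^2 v(x)$ is what allows the scheme to go through; once this is observed, the dyadic summation is routine and the hypothesis $\b + \a < 1$ serves only to guarantee convergence of the geometric series and the estimate $h \leq h^{\b+\a}$ used in the final step.
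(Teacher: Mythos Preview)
Your argument is correct and is essentially the paper's proof repackaged: both telescope the first difference over dyadic scales via the identity $2\,\delta_{h/2}v = \delta_h v - \delta_{h/2}^2 v$ (up to sign and center), the paper writing this out as an explicit pointwise telescopic sum over the points $x_0,x_1,\dots,x_N$ and you as a recursion on $\Phi(h)=\|\delta_h v\|_{L^\infty}$ that unwinds to the same sum. Two minor slips worth noting: in your displayed iterated bound the factor $h^{\beta+\alpha}$ should read $(h^\ast)^{\beta+\alpha}$ (the stated conclusion $\le M_0/2^k + C M_2 h^{\beta+\alpha}$ is unaffected once the geometric sum is evaluated), and your choice of $x=y\pm h/4\in[-1+h/2,1-h/2]$ only succeeds for every $y$ when $h\le 4/3$, so the first recursion step at $h^\ast\in(4/3,2)$ is not quite justified---simply start the iteration one step later at $h^\ast/2<1$, where $\Phi(h^\ast/2)\le M_0$ still holds trivially.
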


\begin{proof}
Consider without loss of generality $H \in (h_0,1/8)$ and $x\in[-1+H/2,0]$. Let $N\geq 2$ be an integer such that $x-H/2+2^NH \leq 1 < x-H/2+2^{N+1}H$, it implies $2^NH \geq 1/2$. Let us consider also,
\[
x_0 = x-H/2, \qquad x_i = x_0+2^{i-1}H, \qquad v_i \in v(x_i) \text{ for $i=0,1,\ldots, N$},
\]
together with the second order differences for $i=1,2,\ldots, (N-1)$,
\[
\qquad \d^2v_i = v_0 - 2v_i +  v_{i+1} \in \d^2_{2^{i-1}H}v(x_i),
\]
such that,
\[
|\d^2v_i| \leq (2^{i-1}H)^{\b+\a}\sup_{h\in(h_0,1)} \left\|\frac{\d_h^2 v}{h^{\b+\a}}\right\|_{L^\8[-1+h,1-h]}.
\]

From the telescopic identity
\[
(2^N-1)v_0 - 2^N v_1 + v_N = \sum_{i=1}^{N-1} 2^{N-i-1}\d^2 v_i
\]
we obtain that,
\[
|v_1-v_0| \leq \underbrace{2^{-N}}_{\leq 2H}\osc_{[-1,1]} v + \frac{1}{1-2^{-(1-(\b+\a))}}H^{\b+\a}\sup_{h\in(h_0,1)} \left\|\frac{\d_h^2 v}{h^{\b+\a}}\right\|_{L^\8[-1+h,1-h]}
\]
therefore the following estimate independently of the choices of $v_0\in v(x_0)$ and $v_1\in v(x_1)$
\[
\frac{|v_1-v_0|}{H^{\b+\a}} \leq C\1\osc_{[-1,1]} v + \sup_{h\in(h_0,1)} \left\|\frac{\d_h^2 v}{h^{\b+\a}}\right\|_{L^\8[-1+h,1-h]}\2.
\]
The conclusion now follows by taking the corresponding supremum on the left-hand side.
\end{proof}

\begin{corollary}\label{lem:appendix3}
For $h_0\in[0,1)$, $h_1\in(h_0,2)$, $\a,\b\in(0,1)$ such that $\b+\a<1$, and $v:[-1,1]\to P(\R)\sm \emptyset$, the following estimate holds for some $C>0$ depending only on $(1-(\b+\a))$,
\[
\sup_{h\in(h_0,h_1)}\left\|\frac{\d_h v}{h^{\b+\a}}\right\|_{L^\8[-1+h/2,1-h/2]} \leq \left\|\frac{\d_{h_1} v}{h_1^{\b+\a}}\right\|_{L^\8[-1+h_1/2,1-h_1/2]} + C\sup_{h\in(h_0,1)} \left\|\frac{\d_h^2 v}{h^{\b+\a}}\right\|_{L^\8[-1+h,1-h]}.
\]
In particular,
\begin{align*}
\sup_{h\in(h_1,2)}\left\|\frac{\d_h v}{h^\b}\right\|_{L^\8[-1+h/2,1-h/2]} \geq \ &\frac{1}{2}\sup_{h\in(h_0,2)}\left\|\frac{\d_h v}{h^\b}\right\|_{L^\8[-1+h/2,1-h/2]}\\
&{} - Ch_1^\a\sup_{h\in(h_0,1)} \left\|\frac{\d_h^2 v}{h^{\b+\a}}\right\|_{L^\8[-1+h,1-h]}.
\end{align*}
\end{corollary}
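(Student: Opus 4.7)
The plan is a dyadic telescoping argument in the spirit of Lemma \ref{lem:interpolation}, but refined so that the ``boundary term'' in the telescoping is $\delta_{h_1}v$ (contributing the desired norm on the right-hand side) rather than $\osc v$. The elementary algebraic identity
\[
\delta_H v(z) = \tfrac{1}{2}\delta_{2H} v(z - H/2) + \tfrac{1}{2}\delta_H^2 v(z-H/2)
\]
iterates into
\[
\delta_h v(y) = \frac{1}{2^N}\, \delta_{2^N h}v(y_N) + \sum_{k=0}^{N-1}\frac{1}{2^{k+1}}\,\delta_{2^k h}^2 v(w_k)
\]
for suitable shifted points $y_N$, $w_k$; this is the workhorse of the proof.

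To prove the first displayed inequality, I would fix $h\in(h_0,h_1)$ and choose $N$ to be the largest integer with $2^N h \leq h_1$ (so $2^N h\in(h_1/2,h_1]$). Dividing the identity above by $h^{\beta+\alpha}$, the rescaled sum of second differences contributes at most
\[
\frac{B}{2}\sum_{k=0}^{N-1}2^{-(1-(\beta+\alpha))k} \leq \tilde C\, B,
\]
where convergence of the geometric series uses $\beta+\alpha<1$. The leading term must be matched to $\|\delta_{h_1}v/h_1^{\beta+\alpha}\|_{L^\infty}$; for this I would use the direct algebraic relation
\[
\delta_{h'}v(y) = \delta_{h_1}v\bigl(y + \tfrac{h_1-h'}{2}\bigr) - \delta_{h_1-h'}v\bigl(y + \tfrac{h_1}{2}\bigr),\qquad 0<h'<h_1,
\]
applied at $h' = 2^N h$ to extract the $\delta_{h_1}v$ contribution. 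The residual term $\delta_{h_1-h'} v$ lives at the smaller scale $h_1-h'\leq h_1/2$, so it is handled by applying the same estimate inductively; the contraction factor $2^{-(1-(\beta+\alpha))}<1$ closes the recursion and absorbs the residual into the $B$-term.

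The ``in particular'' statement is then a direct consequence. For any $h\in(h_0,h_1)$, multiplying the first inequality by $h^\alpha\leq h_1^\alpha$ and rearranging gives
\[
\left\|\frac{\delta_h v}{h^\beta}\right\|_{L^\infty} \leq \left\|\frac{\delta_{h_1}v}{h_1^\beta}\right\|_{L^\infty} + C\, h_1^\alpha\, B.
\]
Set $M_{\ast} := \sup_{h\in(h_0,2)}\|\delta_h v/h^\beta\|_{L^\infty}$ and $S := \sup_{h\in(h_1,2)}\|\delta_h v/h^\beta\|_{L^\infty}$. If $M_{\ast}$ is attained at some $h>h_1$ then $S \geq M_{\ast}\geq M_{\ast}/2$ and the inequality is immediate. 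Otherwise $M_{\ast}$ is attained at some $h\leq h_1$, so $M_{\ast}\leq \|\delta_{h_1}v/h_1^\beta\| + Ch_1^\alpha B$; approximating $h\to h_1^+$ and using semi-continuity gives $\|\delta_{h_1}v/h_1^\beta\|\leq S$, and rearranging yields $S\geq M_{\ast} - Ch_1^\alpha B \geq M_{\ast}/2 - Ch_1^\alpha B$.

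The main obstacle is the non-dyadic case of the first inequality: when $2^N h$ fails to equal $h_1$, the coefficient in front of $\|\delta_{h_1}v/h_1^{\beta+\alpha}\|_{L^\infty}$ must be kept under control while matching the final telescoping step to the scale $h_1$ exactly. The careful choice of the non-dyadic identity for $\delta_{h'}v$ in terms of $\delta_{h_1}v$ and $\delta_{h_1-h'}v$, together with the self-consistent induction powered by the contraction $2^{-(1-(\beta+\alpha))}<1$, is where the bulk of the technical work lies.
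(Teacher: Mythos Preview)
Your derivation of the ``in particular'' statement from the first inequality is essentially the paper's: multiply through by $h^\alpha\leq h_1^\alpha$, bound $I_{h_0,h_1}$ by $\|\delta_{h_1}v/h_1^\beta\|+Ch_1^\alpha B$, and combine with $I_{h_0,h_1}+I_{h_1,2}\geq I_{h_0,2}$.

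For the first inequality, however, the paper takes a different and much shorter route. Instead of telescoping directly and then wrestling with the non-dyadic matching, it \emph{localizes and subtracts the secant line}: fix $x_0\in[-1+h_1/2,\,1-h_1/2]$, choose $v_\pm\in v(x_0\pm h_1/2)$, set $w(x)=v((h_1/2)x+x_0)-L(x)$ with $L$ the affine function through $(\pm1,v_\pm)$, so that $0\in w(\pm 1)$. Lemma \ref{lem:appendix1} then bounds $\osc_{[-1,1]}w$ by the second differences, and Lemma \ref{lem:interpolation} applied to $w$ gives $\sup_h\|\delta_h w/h^{\beta+\alpha}\|\leq C\sup_h\|\delta_h^2 w/h^{\beta+\alpha}\|$. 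Undoing the rescaling, the slope of $L$ produces exactly the term $\|\delta_{h_1}v/h_1^{\beta+\alpha}\|$ with coefficient $1$, and the rest is second-difference. Taking the supremum over $x_0$ covers all of $[-1+h/2,1-h/2]$.

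Your direct telescoping plan has two concrete gaps. First, the one-sided identity $\delta_H v(z)=\tfrac12\delta_{2H}v(z-H/2)+\tfrac12\delta_H^2 v(z-H/2)$ shifts the base point leftward by $(2^N-1)h/2$ after $N$ steps; for $x$ in the full range $[-1+h/2,1-h/2]$ the point $y_N-2^Nh/2$ leaves $[-1,1]$, so neither $\delta_{2^Nh}v(y_N)$ nor your non-dyadic identity (which evaluates $\delta_{h_1-h'}v$ at $y_N+h_1/2$) is defined. The paper's localization to intervals of length $h_1$ is precisely what resolves this. Second, your contraction scheme would yield an inequality of the form $A\leq c_1\|\delta_{h_1}v/h_1^{\beta+\alpha}\|+c_2 A+CB$ with $c_2<1$, giving coefficient $c_1/(1-c_2)$ in front of the $\delta_{h_1}v$ term rather than $1$; the statement as written requires coefficient $1$. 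This is not fatal for the downstream application (a bounded constant would do in Lemma \ref{lem:improvement}), but it does not prove the corollary as stated, and the secant-line subtraction is what delivers the exact coefficient without any recursion.
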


\begin{proof}
Let $x_0 \in [-1+h_1/2,1-h_1/2]$, $v_- \in v(x_0-h_1/2)$, $v_+ \in v(x_0+h_1/2)$ , and consider,
\[
w(x) = v((h_1/2)x+x_0) - L(x), \qquad L(x) = mx+b, \qquad m=\frac{v_+-v_-}{2}, \qquad b=\frac{v_++v_-}{2}.
\]
The construction is given such that $0 \in w(\pm 1)$ and
\[
\d_h w(x) = \d_{(h_1/2)h} v((h_1/2)x+x_0) - mh, \qquad \d_h^2 w(x) = \d_{(h_1/2)h}^2 v((h_1/2)x+x_0).
\]
Applying the previous lemma to $w$ and using Lemma \ref{lem:appendix1} to bound the oscillation by the second order difference we get
\begin{align*}
\sup_{h\in(2h_0/h_1,2)} \left\|\frac{\d_h w}{h^{\b+\a}}\right\|_{L^\8[-1+h/2,1-h/2]} \leq C\sup_{h\in(2h_0/h_1,1)} \left\|\frac{\d_h^2 w}{h^{\b+\a}}\right\|_{L^\8[-1+h,1-h]}.
\end{align*}
Therefore,
\begin{align*}
\sup_{h\in(h_0,h_1)}\left\|\frac{\d_h v}{h^{\b+\a}}\right\|_{L^\8[x_0+h/2,x_0-h/2]} &\leq \frac{2|m|}{h_1^{\b+\a}} + \1\frac{2}{h_1}\2^{\b+\a}\sup_{h\in(2h_0/h_1,2)} \left\|\frac{\d_h w}{h^{\b+\a}}\right\|_{L^\8[-1+h/2,1-h/2]},\\
&\leq \left\|\frac{\d_{h_1} v}{h_1^{\b+\a}}\right\|_{L^\8[-1+h_1/2,1-h_1/2]} + C\sup_{h\in(h_0,1)} \left\|\frac{\d_h^2 v}{h^{\b+\a}}\right\|_{L^\8[-1+h,1-h]}.
\end{align*}
The first part of the corollary now follows by taking the supremum over $x_0 \in [-1+h_1/2,1-h_1/2]$. For the last part we denote for $a<b$,
\begin{align*}
I_{a,b} &:= \sup_{h\in(a,b)}\left\|\frac{\d_h v}{h^{\b}}\right\|_{L^\8[-1+h/2,1-h/2]}.
\end{align*}
From the first result we get that
\[
I_{h_1,2} + Ch_1^\a\sup_{h\in(h_0,1)} \left\|\frac{\d_h^2 v}{h^{\b+\a}}\right\|_{L^\8[-1+h,1-h]} \geq I_{h_0,h_1}.
\]
By adding $I_{h_0,h_1}+I_{h_1,2} \geq I_{h_0,2}$ we obtain that,
\[
2I_{h_1,2} \geq I_{h_0,2}-Ch_1^\a\sup_{h\in(h_0,1)} \left\|\frac{\d_h^2 v}{h^{\b+\a}}\right\|_{L^\8[-1+h,1-h]},
\]
which is the desired estimate.
\end{proof}

Finally, this last lemma establishes a H\"older estimate for the derivative of single-valued functions when $\a+\b>1$.

\begin{lemma}\label{lem:appendix4}
Let $\a,\b\in(0,1)$ such that $\a+\b>1$ and $v:[-1,1]\to\R$ such that,
\[
\sup_{h\in(0,2)}\left\|\frac{\d_h v}{h^\b}\right\|_{C^\a(-1+(h/2),1-(h/2))} \leq 1.
\]
Then for some universal constant $C$,
\[
\|\partial v\|_{C^{1,\a+\b-1}(-1,1)} \leq C.
\]
\end{lemma}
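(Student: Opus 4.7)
The hypothesis packages two pieces of information: the $L^\8$ portion gives $|\delta_h v(x)| \leq h^\b$, and the $C^\a$ seminorm portion gives
\[
|\delta_h v(x) - \delta_h v(y)| \leq h^\b |x-y|^\a.
\]
Starting from the trivial splitting $\delta_h v(x) = \delta_{h/2} v(x+h/4) + \delta_{h/2} v(x-h/4)$ and subtracting $2\,\delta_{h/2}v(x)$, I apply the seminorm hypothesis at scale $h/2$ with displacement $h/4$ to each summand to get the key ``almost second-order'' comparison
\[
|\delta_h v(x) - 2\,\delta_{h/2} v(x)| \leq C\, h^{\a+\b}.
\]
Dividing by $h$ and using $\a+\b>1$ yields the summable one-step estimate $\bigl|\delta_h v(x)/h - \delta_{h/2} v(x)/(h/2)\bigr| \leq C h^{\a+\b-1}$.

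Iterating along the dyadic sequence $h_k = 2^{-k}h$ shows that $\{\delta_{h_k} v(x)/h_k\}$ is uniformly Cauchy in $x$ with geometrically decreasing tail, hence converges to a continuous function that I call $\partial v(x)$. Summing the geometric series produces the explicit rate
\[
\left|\frac{\delta_h v(x)}{h} - \partial v(x)\right| \leq C\, h^{\a+\b-1}.
\]
A short comparison argument between different starting scales $h$ shows that the limit is independent of the choice of $h_0$, and since $v$ is continuous (actually $C^\b$ from the $L^\8$ part of the hypothesis) this $\partial v$ coincides with the classical derivative. In particular $\partial v$ is bounded, with bound depending only on the hypothesis.

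Finally I would obtain the Hölder estimate on $\partial v$ by the triangle inequality at scale $h = |x-y|$,
\[
|\partial v(x) - \partial v(y)| \leq \left|\frac{\delta_h v(x)}{h} - \partial v(x)\right| + \frac{|\delta_h v(x) - \delta_h v(y)|}{h} + \left|\frac{\delta_h v(y)}{h} - \partial v(y)\right|,
\]
where the outer terms are $O(h^{\a+\b-1})$ by the previous display and the middle one is bounded by $h^{\b-1}|x-y|^\a = h^{\a+\b-1}$ by the original hypothesis. The only real bookkeeping obstacle is ensuring that each iterate $h_k$ and each evaluation point $x\pm h_k/4$ stays in the admissible range where the hypothesis applies; since $h_k$ shrinks geometrically this only costs losing a universal margin from the interval $(-1,1)$. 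The estimate itself is the one-dimensional shadow of the standard Zygmund/Campanato characterization of $C^{1,\g}$, and no structural input from the Hele-Shaw problem enters at this stage.
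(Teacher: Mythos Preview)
Your argument is correct, and it takes a genuinely different route from the paper's.

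Both proofs rest on the same dyadic observation: from the $C^\a$ seminorm hypothesis applied at scale $h/2$ one extracts $|\d_h v(x) - 2\,\d_{h/2}v(x)| \leq C h^{\a+\b}$, and the condition $\a+\b>1$ makes the resulting geometric series summable. The difference is in how this is deployed. The paper first invokes an external result (Lemma~5.6 in \cite{Caffarelli95}) to conclude that $v$ is Lipschitz and hence differentiable almost everywhere; then, at a point of differentiability normalized so that $v(0)=\p v(0)=0$, it argues by contradiction: if $v(h) > Ch^{\a+\b}$ for some $h$, the dyadic inequality forces $v(2^{-i}h)/(2^{-i}h)$ to stay bounded away from zero as $i\to\infty$, contradicting $\p v(0)=0$. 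Density then gives the full estimate.

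Your approach is instead constructive: you use the same dyadic inequality to show directly that $\{\d_{2^{-k}h}v(x)/(2^{-k}h)\}_k$ is Cauchy, identify the limit as $\p v(x)$, and read off the rate $|\d_h v(x)/h - \p v(x)| \leq C h^{\a+\b-1}$. The H\"older estimate on $\p v$ then drops out of the three-term triangle inequality at scale $h=|x-y|$. This is the standard Campanato/Zygmund route, and it has the advantage of being self-contained: you do not need to import Lipschitz continuity or a.e.\ differentiability from elsewhere, since the derivative is produced in the course of the argument. The paper's version is slightly shorter on the page but leans on the external citation; yours trades that for a few more lines of explicit iteration. The domain bookkeeping you flag (keeping $x\pm h_k/4$ inside the admissible interval) is indeed the only technical wrinkle, and as you note it costs at most a fixed margin.
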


\begin{proof}
By Lemma 5.6 from \cite{Caffarelli95} we know that $v$ is Lipschitz and therefore differentiable almost everywhere. By a density argument it suffices to show that that for each point of differentiability $x_0\in(-1,1)$,
\begin{align*}
|v(x)-v(x_0)-\partial v(x_0)(x-x_0)| \leq C|x|^{\a+\b} \text{ for } x \in [-1,1].
\end{align*}

Assume without loss of generality that $x_0 = v(x_0) = \partial v(x_0) = 0$. If there exists $h\in(0,2)$ such that $u(h) > Ch^{\a+\b}$, then by iterating the hypothesis of the Lemma we get for every positive integer $i$,
\begin{align*}
\frac{v(2^{-i}h)}{2^{-i}h} > \1C-\sum_{j=0}^{i-1}2^{-(\a+\b-1)j}\2 h^{\a+\b-1} \geq \frac{C}{2}h^{\a+\b-1}>0,
\end{align*}
provided that $C = 4/(2^{\a+\b-1}-1)$. This contradicts $\partial_t u(0)=0$ as $i\to\8$. 
\end{proof}

\subsection{Conformal Deformations}\label{sec:domain_deformations}

We review some facts about conformal deformations used in Section \ref{sec:jensen} in order to determine the blow limit. It is worthwhile to recall conformal mappings have been useful throughout free boundary --in particular, in earlier works dealing with planar domains, where conformal invariance was greatly exploited. More recently, they were used to construct appropriate supersolutions in work of Choi, Jerison, and Kim - these were important in propagating the $\varepsilon$-monotonicity, see \cite[Lemma 3.2]{MR2306045}. 

A smooth bijection $\Phi:\R^n\to\R^n$ is conformal if $D\Phi$ can be factored as a positive scaling times a rotation which we will denote as,
\[
D\Phi(x) = \varphi(x) R(x), \qquad \varphi(x) > 0,\qquad R(x) \in SO(n).
\]
Clearly $\Phi^{-1}$ is also a conformal and,
\[
D\Phi^{-1} = \frac{1}{\varphi(x)} R^{-1}(x) = \frac{1}{\varphi(x)}  R^{T}(x).
\]

Let $\psi:\R^n\to (0,\8)$ smooth. A deformation of a given function $V$ by $\Phi$ and $\psi$ is given by
\[
\widetilde V(y) = \frac{1}{\psi(\Phi^{-1}(y))}V(\Phi^{-1}(y)) \qquad\Leftrightarrow\qquad \psi(x)\widetilde V(\Phi(x)) = V(x).
\]
If $\psi$ is identically equal to one, then the level sets of $\widetilde V$ are given by deforming the level set of $V$ using $\Phi$,
\[
\{\widetilde V \leq c\} = \Phi\{V \leq c\}.
\]
If $V$ is smooth and $V(x_0) = \widetilde V(\Phi(x_0)) = 0$ one gets that,
\[
|D \widetilde V(\Phi(x_0))| = \frac{1}{\psi(x_0)\varphi(x_0)} |D V(x_0)|.
\]

In this work we define $|D^{NT}V(x_0)|$ whenever for some unit vector $\nu\in\p B_1$, $V$ has an asymptotic development about $x_0$ of the form,
\[
V(x) = |D^{NT}V(x_0)|(x-x_0)\cdot\nu + o(|x-x_0|),
\] 
where $x$ approaches $x_0$ in non tangential regions of $\W_V^+$ about $x_0$. Also in this case one easily checks that $\widetilde V$ has an asymptotic development about $\Phi(x_0)$ of the form,
\[
\widetilde V(x) = \frac{1}{\psi(x_0)\varphi(x_0)} |D^{NT}V(x_0)|(x-\Phi(x_0))\cdot R(\nu) + o(|x-\Phi(x_0)|)
\]
Therefore the following change of variables formula also holds under this weaker definition,
\[
|D^{NT} \widetilde V(\Phi(x_0))| = \frac{1}{\psi(x_0)\varphi(x_0)} |D^{NT} V(x_0)|.
\]

Another change of variables we are interested is for the speed of free boundary of $V$ where $V$ is a non negative function depending also on time. Let for $f(t) \in \R^n$ smooth,
\[
\psi(x)\widetilde V(\Phi(x)+f(t),t) = V(x,t).
\]
In the smooth case,
\[
\frac{\p_t V}{|D V|} = \frac{\p_t \widetilde V + \p_t f\cdot D\widetilde V}{\varphi |D \widetilde V|} = \frac{1}{\varphi} \frac{\p_t \widetilde V}{|D \widetilde V|} + \frac{\p_t f}{\varphi}\cdot \frac{D\widetilde V}{|D \widetilde V|}.
\]
A similar formula also holds at a space time regular point $x_0\in \G_V(t_0)$ (Definition \ref{def:gamma_reg})  for which $\nu \in \p B_1$ the interior normal vector to $\W_V^+(t_0)$ is well defined. Then,
\[
\frac{\p_t V}{|D V|}(x_0,t_0) = \frac{1}{\varphi(x_0)} \frac{\p_t \widetilde V}{|D \widetilde V|}(\Phi(x_0)+f(t_0),t_0) + \frac{\p_t f(t_0)}{\varphi(x_0)}\cdot R(\nu).
\]
To prove this claim we notice first that the factor $\psi>0$ is actually irrelevant with respect to the free boundary. Then the formula holds by applying the deformation to a given parametrization of the free boundary and using the standard chance of variable formulas.

Let $\Phi$ be the composition of an inversion about $\p B_\r(\r e_n)$ followed by a reflection about the plane $\{x_n=0\}$,
\begin{align*}
y &= \Phi(x) = \r^2\frac{(x-2x_n e_n)+\r e_n}{|x-\r e_n|^2} - \r e_n = x+\r^{-1}\12x_n x - |x|^2e_n\2 + O(\r^{-2}|x|^2)\\
x &= \Phi^{-1}(y) = \r^2\frac{(y-2y_n e_n)-\r e_n}{|y+\r e_n|^2} + \r e_n = y-\r^{-1}\12y_n y - |y|^2e_n\2 + O(\r^{-2}|y|^2)
\end{align*}
such that,
\[
\frac{|y+\r e_n|}{\r}\frac{|x-\r e_n|}{\r} = 1.
\]
Given $V$ we consider the following type of Kelvin transform,
\begin{align*}
\widetilde V(y) &= \1\frac{\r}{|y+\r e_n|}\2^{n-2}V(\Phi^{-1}(y)), \qquad\qquad V(x) = \1\frac{\r}{|x-\r e_n|}\2^{n-2}\widetilde V(\Phi(x)).
\end{align*}
The idea is that the hodograph of $\widetilde V$ can be estimated by the hodograph of $V$ modulo a quadratic correction.

\begin{lemma}\label{lem:kelvin}
Let $v$ and $\widetilde v$ be the hodographs of $V$ and $\widetilde V$ with respect to $\e\in(0,1)$,
\[
V(x-\e v(x)e_n) = x_n^+ \qquad\qquad \widetilde V(x-\e \widetilde v(x)e_n) = x_n^+.
\]
For any $\s\in(0,1)$ and $\r=(B\e)^{-1}$ we get that,
\[
\|\widetilde v - (v - B(|x'|^2-(n-1)x_n^2))\|_{L^\8(B_1^+)} \leq \s
\]
assuming that $\e^\a N$ is sufficiently small where,
\[
N = \|v\|_{L^\8(B_1^+)}+1 \qquad\text{and}\qquad [v]_{C^\a_{trun(C\e)}(B_1^+)} \leq CN.
\]
\end{lemma}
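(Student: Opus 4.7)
The plan is to extract the leading-order effect of the Kelvin transform on the hodograph by Taylor expanding both $\Phi$ and the conformal factor $\Psi(y) := (\rho/|y+\rho e_n|)^{n-2}$ in powers of $\rho^{-1} = B\e$, and matching the hodograph parametrizations of the level sets of $\widetilde V$ with those of $V$ pushed forward by $\Phi$. Writing $\{V = c\} = \{(y', c - \e v(y', c))\}$, the Kelvin identity $\widetilde V(y) = \Psi(y) V(\Phi^{-1}(y))$ identifies
\begin{equation*}
\{\widetilde V = s\} = \Phi\bigl(\{V(x) = s(|x - \rho e_n|/\rho)^{n-2}\}\bigr).
\end{equation*}
Parametrizing a point of the pulled-back set by the hodograph of $V$ and pushing it forward by $\Phi$ yields an explicit expression for the level curve of $\widetilde V$ in $z = \Phi(x)$-coordinates.

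I would then substitute the expansions $\Phi(x) = x + \rho^{-1}(2x_n x - |x|^2 e_n) + O(\rho^{-2}|x|^3)$ and $(|x-\rho e_n|/\rho)^{n-2} = 1 - (n-2) x_n/\rho + O(\rho^{-2}|x|^2)$, and equate the $n$-th component of the pushed-forward curve with the hodograph relation $z_n = s - \e \widetilde v(z', s)$. Collecting the terms of order $\rho^{-1} = B\e$ produces, to leading order,
\begin{equation*}
\widetilde v(z', s) - v(x', s') = -B\bigl((n-1) s^2 - |z'|^2\bigr) + O(B^2 \e),
\end{equation*}
with $x' = z' + O(B\e N)$ and $s' = s + O(B\e N)$. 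Since $s \approx z_n$ to leading order, the right-hand side matches $-Q(z', z_n)$ up to $O(B^2\e)$, producing the formal identity $\widetilde v \approx v - Q$. The particular harmonic form of $Q$ is natural: it is forced as the harmonic extension (with the appropriate Neumann structure on $\{z_n=0\}$ inherited from the linearized hodograph equation) of the boundary perturbation that $\Phi$ imposes on the free boundary.

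To upgrade this formal matching to the claimed $L^\infty$ bound, I would replace $v(x', s')$ by $v(z', z_n)$ using the truncated H\"older estimate $[v]^*_{C^\a_{trun(C\e)}} \leq CN$ from Theorem \ref{thm:trun_holder}, picking up an error of order $CN(B\e N)^\a$ provided $B$ exceeds a universal constant so that the shift $|(x',s')-(z',z_n)| \sim B\e N$ sits safely above the truncation scale $C\e$; and analogously replace $\widetilde v(z',s)$ by $\widetilde v(z', z_n)$ using a corresponding H\"older estimate for $\widetilde v$, which holds because the Kelvin deformation is $O(B\e N)$-close to the identity on $B_1^+$, so $\widetilde V$ inherits the flatness hypothesis of $V$ and Theorem \ref{thm:trun_holder} applies. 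Summing these contributions and invoking the assumed smallness of $\e^\a N$ then forces the total below the prescribed threshold $\s$.

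The hard part will be the delicate matching of scales. The H\"older truncation is placed precisely at scale $C\e$, which is the same order as the shifts introduced by $\Phi$ and by the level deviation $s - z_n$; the na\"ive $L^\infty$ fallback $\|v\|_\infty \leq N$ is only of order one and hence too weak. The estimate is only usable in the regime where the shifts are \emph{strictly} above the truncation scale, which becomes accessible once $B$ exceeds a universal threshold (so that shifts scale as $B\e$), after which the hypothesis $\e^\a N \ll 1$ (depending on $B$ and $\s$) suppresses the resulting $CN(B\e N)^\a$ below $\s$. A secondary but essential point is the transfer of the flatness hypothesis and truncated H\"older control from $v$ to $\widetilde v$ through the near-identity Kelvin transform; this is straightforward but must be verified, since Theorem \ref{thm:trun_holder} is the tool supplying the H\"older estimate for $\widetilde v$.
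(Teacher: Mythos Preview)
Your approach is correct in spirit and close to the paper's own argument: both Taylor-expand the Kelvin map $\Phi$ and the conformal weight in powers of $\rho^{-1}=B\e$, match the hodograph parametrizations of the level sets, and absorb the resulting $O(\e)$ shifts in the argument of $v$ via the truncated H\"older hypothesis $[v]^*_{C^\a_{trun(C\e)}}\leq CN$. The paper organizes the computation more cleanly by factoring through an intermediate function $W(x):=\widetilde V(\Phi(x-\e v(x)e_n))$. One first computes $W(x)=(1-\e[B(n-2)x_n+O(\e N)])x_n^+$, so its hodograph satisfies $w\approx B(n-2)x_n^2$; then one writes $y-\e\widetilde v(y)e_n=\Phi(x-\e[w(x)+v(x-\e w(x)e_n)]e_n)$ with $y_n=x_n$, and the expansion of $\Phi$ supplies the remaining piece $-B(|y'|^2-y_n^2)$. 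The two contributions combine to $-Q$. This separation of the conformal weight from the geometric deformation keeps the bookkeeping tidy and, more to the point, makes it transparent that only the H\"older control on $v$ is used: the argument of $\widetilde v$ is already the target domain point $(y',x_n)$, so no shift of $\widetilde v$ is ever needed.

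That last point flags an unnecessary complication in your proposal. You parametrize $\{\widetilde V=s\}$ as $(z',s-\e\widetilde v(z',s))$ and then speak of replacing $\widetilde v(z',s)$ by $\widetilde v(z',z_n)$ via a H\"older estimate for $\widetilde v$. But $(z',s)$ \emph{is} the domain point of $\widetilde v$ --- in the hodograph convention the second argument is the level value, which is the $x_n$-coordinate of the hodograph domain --- while $z_n=s-\e\widetilde v(z',s)$ is the height of the corresponding level-set point in the physical domain, a distinct quantity. The conclusion of the lemma concerns $\widetilde v$ at points $(z',s)\in B_1^+$, so no shift of $\widetilde v$ is required and you need not transfer the flatness or truncated H\"older control to $\widetilde v$. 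Relatedly, your remark about taking $B$ large so that the shifts sit above the truncation scale is off: $B$ is fixed by the statement (through $\rho=(B\e)^{-1}$), not a free parameter; the shifts are $O(\e)$ with a constant depending on $B$, and the truncated H\"older bound still yields an oscillation of order $N\e^\a$ at that scale, which is exactly the $O(\e^\a N)$ error entering the final estimate.
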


\begin{proof}
Let,
\begin{align*}
W(x) &:= \widetilde V(\Phi(x-\e v(x)e_n))\\
&= \left|\frac{x-(\e v(x)+\r)e_n}{\r}\right|^{n-2}V(x-\e v(x)e_n)\\
&= \11-\e[B(n-2)x_n+O(\e N)]\2 x_n^+
\end{align*}
Therefore, if $w$ denotes the hodograph of $W$ we get that,
\[
\|w-B(n-2)x_n^2\|_{L^\8(B_1^+)} \leq C\e N.
\]

On the other hand,
\begin{align*}
x_n^+ = W(x-\e w(x)e_n) = \widetilde V(\Phi(x-\e[w(x)+v(x-\e w(x)e_n)]e_n)).
\end{align*}
This allows us to consider,
\begin{align*}
y-\e\widetilde v(y)e_n = \Phi(x-\e[w(x)+v(x-\e w(x)e_n)]e_n) \qquad\text{where}\qquad y=(y',y_n)=(y',x_n).
\end{align*}
For $\e N$ sufficiently small and $x\in B_1^+$ we get that
\[
|x-\e[w(x)+v(x-\e w(x)e_n)]e_n|\leq 2 \qquad\Rightarrow\qquad |y-x| = |y'-x'| \leq C\e,
\]
where the implication follows from the expansion of $\Phi$ in terms of $\r^{-1} = B\e$. Also from the same expansion,
\begin{align*}
y_n-\e\widetilde v(y) &= y_n-\e [w(x)+v(x-\e w(x)e_n)-B(|y'|^2-y_n^2)+O(\e)]\\
&= y_n-\e [v(y)-B(|y'|^2-(n-1)y_n^2)+O(\e^\a N)].
\end{align*}
This implies the desired estimate.
\end{proof}

Let us consider now another type of conformal deformation for $p = (p',p_n)$,
\begin{align*}
V_p(e^{-\e M}x) = V(x),\qquad\text{where}\qquad M = p_nId + e_n\otimes p' - p'\otimes e_n \in \R^{n\times n}.
\end{align*}
In this case the hodograph of $V_p$ can be estimated by the hodograph of $V$ modulo a linear deformation.

\begin{lemma}\label{lem:rotation}
Let $v$ and $v_p$ be the hodographs of $V$ and $V_p$ with respect to $\e\in(0,1)$,
\[
V(x-\e v(x)e_n) = x_n^+, \qquad\qquad V_p(x-\e v_p(x)e_n) = x_n^+.
\]
Then for $\s\in(0,1)$
\[
\|v_p - (v + p\cdot x)\|_{L^\8(B_1^+)} \leq \s.
\]
assuming that $\e^\a N$ is sufficiently small where,
\[
N = \|v\|_{L^\8(B_1^+)}+1 \qquad\text{and}\qquad [v]_{C^\a_{trun(C\e)}(B_1^+)} \leq CN.
\]
\end{lemma}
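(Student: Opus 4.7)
The plan is to mirror the strategy of Lemma \ref{lem:kelvin}. First note that $M = p_n Id + (e_n\otimes p' - p'\otimes e_n)$ decomposes as a scalar multiple of the identity plus an antisymmetric matrix, so $e^{-\e M} = e^{-\e p_n}R$ for some $R\in SO(n)$: the map $x\mapsto e^{-\e M}x$ is a scaled rotation (in particular, conformal), and the level sets of $V_p$ are obtained from those of $V$ by this small conformal deformation.

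Starting from $V_p(z-\e v_p(z)e_n) = z_n$ and using $V_p(y) = V(e^{\e M} y)$, the relation becomes $V\bigl(e^{\e M}(z-\e v_p(z)e_n)\bigr) = z_n$. Set $y := z - \e v_p(z)e_n$. Using the identities $Mx = p_n x + (p'\cdot x')e_n - x_n p'$ (so that $(Mx)_n = p\cdot x$) and $Me_n = p_n e_n - p'$, a first order Taylor expansion of $e^{\e M}$ gives
\[
(e^{\e M}y)_n = y_n + \e\,(p\cdot y) + O(\e^2 N), \qquad (e^{\e M}y)' = y' + O(\e N).
\]
Combining this with the asymptotic expansion $V(\xi) = \xi_n + \e\, v(\xi) + O(\e^{1+\a}N^{1+\a})$ (obtained from $V(\xi)=c\Leftrightarrow \xi_n = c - \e v(\xi',c)$ together with the H\"older continuity of $v$ in its last slot) and equating to $z_n$, a short calculation yields
\[
v_p(z) \;=\; v\bigl((e^{\e M}y)',\, z_n\bigr) + p\cdot z + O(\e^\a N^{1+\a}) + O(\e N).
\]

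To close the argument I would replace $v((e^{\e M}y)', z_n)$ by $v(z)$. Since $y' = z'$ and $|(e^{\e M}y)' - z'| = O(\e N)$, and this displacement exceeds the truncation scale $C\e$ (because $N\geq 1$), the truncated H\"older bound $[v]_{C^\a_{trun(C\e)}(B_1^+)}\leq CN$ gives $|v((e^{\e M}y)',z_n) - v(z)| \leq C\e^\a N^{1+\a}$. Thus $|v_p(z) - v(z) - p\cdot z| \leq C(\e^\a N^{1+\a} + \e N)$, which is at most $\s$ once $\e^\a N$ is taken sufficiently small. The main obstacle is the careful bookkeeping around the truncated seminorm: every displacement at which $v$ is compared must exceed $C\e$ so the seminorm can be used, which is precisely why the smallness assumption is phrased as $\e^\a N$ (rather than $\e$) small. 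A minor additional point is that one should work slightly inside $B_1^+$ so that all evaluation points $(e^{\e M}y)'$ remain in the domain where the hypothesis on $v$ is in force; this costs only an $O(\e N)$ buffer and is absorbed in the constants.
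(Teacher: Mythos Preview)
Your approach is essentially the paper's, run in the opposite direction: the paper starts from $V_p(e^{-\e M}(x-\e v(x)e_n))=x_n$ and identifies this point with $y-\e v_p(y)e_n$ (with $y_n=x_n$), whereas you start from $V(e^{\e M}(z-\e v_p(z)e_n))=z_n$. Either direction works.

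There is, however, a genuine quantitative gap in your bookkeeping. Your final error is $O(\e^\a N^{1+\a})$, and you claim this is $\leq \s$ once $\e^\a N$ is small. That implication is false: $\e^\a N^{1+\a} = (\e^\a N)\,N^\a$, and the lemma allows $N\to\infty$ provided only that $\e^\a N\to 0$ (take for instance $N=\e^{-\a+\d}$ with $\d<\a^2/(1+\a)$). Two places inflate the error. First, your intermediate expansion $V(\xi)=\xi_n+\e v(\xi)+O(\e^{1+\a}N^{1+\a})$ is unnecessary: since $V(\xi)=z_n$ is \emph{exactly} equivalent to $\xi_n = z_n - \e\,v(\xi',z_n)$, you can read off the $n$-th coordinate directly with no error. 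Second, the displacement $(e^{\e M}y)'-z'$ is $O(\e)$, not $O(\e N)$: once $\e N\leq 1$ one has $|y|\leq 2$, so $|e^{\e M}y-y|\leq C\e|p|$. With these two corrections the H\"older step gives $|v((e^{\e M}y)',z_n)-v(z)|\leq CN(C\e)^\a = O(\e^\a N)$, and the conclusion follows from $\e^\a N$ small exactly as stated. The paper's proof obtains $O(\e^\a N)$ for the same reason, by comparing the two hodograph identities directly rather than passing through an expansion of $V$.
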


\begin{proof}
As before we get that,
\begin{align*}
V_p(e^{-\e M}(x-\e v(x)e_n)) = V(x-\e v(x)e_n) = x_n^+
\end{align*}
Letting
\begin{align*}
y-\e v_p(y)e_n = e^{-\e M}(x-\e v(x)e_n) \;\;\textnormal{where}\;\; y=(y',y_n)=(y',x_n).
\end{align*}
we still get that $|x-y|=|x'-y'|\leq C\e$ and,
\begin{align*}
y_n-\e v_p(y) &= y_n -\e\1v(x)+ (M x)_n + O(\e)\2\\
&= y_n -\e\1v(y)+p\cdot y + O(\e^\a N)\2,
\end{align*}
which implies the desired estimate.
\end{proof}

\bibliographystyle{plain}
\bibliography{mybibliography}

\end{document}